\documentclass[notitlepage,reqno,11pt]{amsart}
\usepackage{latexsym,amssymb, epsfig, amsmath,amsfonts, subfigure,amsthm}

\usepackage{rotating}
\usepackage[toc,page]{appendix}
\usepackage{color}
\usepackage{multirow}
\usepackage{relsize}
\usepackage{microtype}
\usepackage[normalem]{ulem}
\usepackage[foot]{amsaddr}

\usepackage[colorlinks, allcolors=blue]{hyperref}

\usepackage[margin=1in]{geometry}

%\topmargin 0.0truein
%\oddsidemargin 0.25truein
%\evensidemargin 0.25truein
%\textheight 8.5truein
%\textwidth 6.0truein

\raggedbottom

\numberwithin{equation}{section}
 \newtheorem{assumption}{Assumption}[section]
\newtheorem{lemma}{Lemma}[section]
\newtheorem{theorem}{Theorem}[section]
\newtheorem{definition}{Definition}[section]

\newtheorem{prop}{Proposition}[section]
\newtheorem{remark}{Remark}[section]

\newtheorem{example}{Example}[section]

\newlength{\defbaselineskip}
\setlength{\defbaselineskip}{\baselineskip}
\newcommand{\setlinespacing}[1]%
           {\setlength{\baselineskip}{#1 \defbaselineskip}}

\newcommand{\RR}{{\mathbb R}}
\newcommand{\ZZ}{{\mathbb Z}}
\newcommand{\NN}{{\mathbb N}}

\def\E{\mathbb{E}}
\def\P{\mathbb{P}}

\newcommand{\sF}{{\mathcal{F}}}

\newcommand{\beql}[1]{\begin{equation}\label{#1}}
\newcommand{\eeq}{\end{equation}}

\newcommand{\beqal}[1]{\begin{eqnarray}\label{#1}}
\newcommand{\eeqa}{\end{eqnarray}}
\newcommand{\beq}{\begin{displaymath}}
\newcommand{\eeqno}{\end{displaymath}}
\newcommand{\bali}[1]{\begin{align}\label{#1}}
\newcommand{\eali}{\begin{align}}
\newcommand{\balino}{\begin{align*}}
\newcommand{\ealino}{\begin{align*}}

\newcommand{\ep}{\epsilon}

\newcommand{\Var}{\text{\rm Var}}
\newcommand{\Cov}{\text{\rm Cov}}
\newcommand{\wt}{\widetilde}

\newcommand{\R}  {\mathbb{R}}

\newcommand{\bD}{{\mathbf D}}

\newcommand{\bC}{{\mathbf C}}

\newcommand{\bone}{{\mathbf 1}}

\newcommand{\qandq}{\quad\mbox{and}\quad}

\newcommand{\qforq}{\quad\mbox{for}\quad}

\newcommand{\qasq}{\quad\mbox{as}\quad}
\newcommand{\qinq}{\quad\mbox{in}\quad}

\newcommand{\non}{\nonumber}
\newcommand{\RA}{\Rightarrow}

\newcommand{\baa}{\begin{eqnarray*}}
\newcommand{\eaa}{\end{eqnarray*}}

\newcommand{\ttl}{\Large Functional central limit theorems for epidemic models \\[5pt]
with varying infectivity}

\begin{document}

\title[]{\ttl}

\author[Guodong \ Pang]{Guodong Pang}
\address{Department of Computational Applied Mathematics and Operations Research,
George R. Brown College of Engineering,
Rice University,
Houston, TX 77005}
\email{gdpang@rice.edu}

\author[{\'E}tienne \ Pardoux]{{\'E}tienne Pardoux}
\address{Aix Marseille Univ, CNRS, I2M, Marseille, France}
\email{etienne.pardoux@univ.amu.fr}

\begin{abstract} 
In this paper, we prove a functional central limit theorem (FCLT) for a stochastic epidemic model with varying infectivity and general infectious periods recently introduced in \cite{FPP2020b}. 
The infectivity process  (total force of infection at each time) is composed of the independent infectivity random functions of each infectious individual, which starts at the time of infection. 
These infectivity random functions induce the infectious periods (as well as exposed, recovered or immune periods in full generality), whose probability distributions can be very general. 
The epidemic model includes the generalized non--Markovian SIR, SEIR, SIS, SIRS models with infection-age dependent infectivity. 
In the FCLTs for the generalized SIR and SEIR models,
 the limits of the diffusion-scaled fluctuations of the infectivity and susceptible processes are a unique solution to a two-dimensional Gaussian-driven stochastic Volterra integral equations, and then given these solutions, the limits for the infected (exposed/infectious) and recovered processes are Gaussian processes expressed in terms of the solutions to those stochastic Volterra integral equations. 
We also present the FCLTs for the generalized SIS and SIRS models. 
\end{abstract}

\keywords{epidemic model, varying infectivity, infection-age dependent infectivity,   Gaussian-driven stochastic Volterra integral equations, Poisson random measure, moment estimate of the supremum of stochastic processes}

\maketitle

\allowdisplaybreaks

\section{Introduction}

It has been observed in recent studies of the Covid-19 pandemic (see e.g. \cite{he2020temporal}) that the infectivity of infectious 
individuals decreases from the epoch of symptom first appearing to full recovery. The varying infectivity 
characteristics also appears in many other epidemic diseases \cite{KMK,BCF-2019}. We have presented a 
stochastic epidemic model with varying infectivity in \cite{FPP2020b}, where the various individuals have i.i.d. 
infectivity random functions, and the total force of infection at each time is the aggregate infectivity of all the 
individuals that are currently infectious.  The i.i.d. assumption on the infectivity functions can be justified since we model the spread of the same disease and the infectivity is largely determined by viral load progression.
We have proved a functional law of large numbers  (FLLN) for the epidemic dynamics which results in a 
deterministic epidemic model, which is the model described as an ``age-of-infection epidemic model" in 
\cite{KMK,brauer2008age,BCF-2019}.
For the early stage, we have studied the stochastic model directly starting with a small number of infectious individuals, and proved that the epidemic grows at an exponential rate on the event of non-extinction 
 using an approximation by a non Markovian branching process.  
In addition, we have deduced the initial basic reproduction number $R_0$ from the limit process and computed its value for the case of the early phase of the Covid-19 epidemic in France. We have concluded 
a decreased value of $R_0$ induced by the decrease of the infectivity when the age of infection increases.

In this paper, we study the stochastic fluctuations of the evolution dynamics around the deterministic limits for the stochastic epidemic models with varying infectivity. 
The infectivity random functions can take a very general form (see Assumption \ref{AS-lambda}) and start with a value zero for a period of time, which then in turn determines the durations of the exposed and infectious periods. The joint distribution of these two periods is determined by the law of the random function, and can be very general. Clearly, this also includes the special case with only infectious periods, where the random functions do not start from zero. 
We start with the generalized SIR model, where ``I" represents the ``infected", including either the case of only infectious periods, or the case of both exposed and infectious periods. 
We then study the generalized SEIR model, where ``E" and ``I" represent the ``exposed" and ``infectious" periods, which can be regarded as a more detailed model in the second above scenario by considering the separate compartments. 
Although it may appear this is a special case of the generalized SIR model, in many settings, one would be interested in the dynamics of the detailed ``E" and ``I" compartments, for example, the infectious population in isolation or hospitalization.
Moreover, mathematically, for the FCLTs, 
one can deduce the driving Gaussian processes for the limits of the SIR model from those of the SEIR model, but not the other way around
(see also Remarks \ref{rem-I-expression} and \ref{rem-comparison}).   
Therefore, for the sake of readability, we first describe the generalized SIR model and state the limit theorems, 
and then do the same for the generalized SEIR model, while we only provide the detailed proofs for the generalized SEIR model. Would we provide the proofs for the SIR case only, that would not simplify significantly the arguments. 

For both models, in the FLLNs, we study the mean  total infectivity process jointly with the proportions of the compartment counting processes. 
 In particular, the infectivity and susceptible functions in the limit are uniquely determined by a two-dimensional Volterra integral equation, and given these two functions, the other compartment limits are given by Volterra integral formulas. 
In the FCLTs,  we prove the joint convergence of the diffusion-scaled infectivity process with the compartment counting processes. 
In the limit, the diffusion-scaled infectivity and susceptible processes is determined by a two-dimensional Gaussian-driven linear stochastic Volterra integral equation. 
In particular, the diffusion-scaled instantaneous infectivity rate process has a limit that is a linear functional of the susceptible and infectivity limit processes. 
Given these,  the limits of the other compartment counting processes are expressed in terms of the solution of the above stochastic Volterra integral equation.   These results extend the FCLT for the classical non-Markovian SIR and SEIR models in \cite{PP-2020}.

The main challenge in the proof of the FCLT lies in  the convergence of the aggregate infectivity process. We allow
the individual infectivity random functions to be piecewise continuous with a finite number of discontinuities as stated in Assumption \ref{AS-lambda}, which covers many practical settings, see Examples \ref{ex-1}--\ref{ex-2}. 
We use Poisson random measures (PRMs) induced by the laws of these random functions in the functional space $\bD$, and take advantage of some useful properties of stochastic integrals with respect to 
the corresponding compensated PRMs.
 We first give a useful decomposition of this process, and construct two auxiliary processes by replacing the random instantaneous infectivity rate process by its deterministic limit function in the FLLN. 
For these auxiliary processes, we employ the moment method to prove their tightness, using the criterion for tightness in  \cite[Theorem 13.5]{billingsley1999convergence}. This condition is more precisely (13.14) from \cite{billingsley1999convergence}, and is \eqref{moment3points} in the Appendix. This, together with the convergence of finite dimensional distributions,   proves their weak convergence. 
The martingale approach employed in \cite{PP-2020} (see the proof of Lemma~\ref{lem-3.12}) could not be used for the aggregate infectivity process. It seems hard to construct appropriate martingales for the model considered in the present paper. 

A major difficulty in the proof of tightness lies in the estimation of the expectation of the supremum of the square of the diffusion-scaled processes (see Lemma \ref{lem-hatPhi-2-boundedness} and equation \eqref{eqn-hatPhi-2-boundedness-sup-inside}), since we cannot use a semimartingale decomposition and exploit Doob's maximal inequality. In order to establish this, we have developed a new criterion to interchange $\sup_t$ and the expectation of the square of a stochastic process, provided that the process satisfies the well--known tightness criterion using the increments at three time points: see equation \eqref{3pts} as in Theorem \ref{13.5} (which is a variant of Theorem 13.5 of \cite{billingsley1999convergence}), as well as the sufficient moment criterion in \eqref{3pts-moments}).   This result is stated in Theorem \ref{th:momentsup}. It should be of independent interest and we expect that it will be useful for other purposes. For our model, we employ the moment formulas (specifically the mixed second moment formula in \eqref{mixed2moment}) for the stochastic integrals with respect to the PRMs to establish the tightness moment bound for the increments of the infectivity process (see Propositions \ref{prop-wt-mfI-1-inc-moment} and \ref{prop-wt-mfI-2-inc-moment}).

 In addition, another challenge is to prove the joint convergence of the aggregate infectivity processes and the compartment counting processes. 
 We have developed an approach using a common PRM to represent the subprocesses associated with the newly infected individuals in the expressions of the infectivity and compartment processes, and then carried out calculations with characteristic functions of these processes by making use of the properties of PRMs (See the proof of Lemma \ref{lem-3.12}).

\smallskip

We also state the FCLTs without proofs for the generalized SIS and SIRS models with varying infectivity (the proofs  follow with slight modifications). 
For the SIS model, the epidemic dynamics is determined by the aggregate infectivity process and the infectious process, whose limits are given by a two-dimensional Gaussian-driven linear stochastic Volterra integral equation (Theorem \ref{thm-FCLT-SIS}). For the SIRS model, the epidemic dynamics is determined by the aggregate infectivity process and the infectious and recovered/immune processes, whose limits are given by a three-dimensional Gaussian-driven linear stochastic Volterra integral equation (Theorem \ref{thm-FCLT-SIRS}).

\smallskip 

This work contributes to the literature of stochastic epidemic models in the aspects of infection-age dependent infectivity, and general infectious periods. It establishes a useful result describing the fluctuations of the stochastic individual based model around its law of large numbers limit. 
The existing work on epidemic models with infection-age dependent (varying) infectivity has all been about the deterministic models, pioneered by Kermack and McKendrick  in 1927 \cite{KMK}
and more recently proposed by  Brauer \cite{brauer2008age} (see also  \cite[Chapter 4.5]{BCF-2019}), and the PDE models (see, e.g., \cite{hoppensteadt1974age,thieme1993may,inaba2004mathematical,magal2013two}). 
These were not considered as the FLLN limits of a well specified stochastic model either, except our work in \cite{FPP2020b}. 
For non-Markovian epidemic models with general infectious periods, although some deterministic models (including Volterra integral equations) appeared in the literature (see, e.g., \cite[Chapter 4.5]{BCF-2019} and references therein), the works that rigorously establish them as a FLLN from a stochastic model are limited. 
Both FLLNs and FCLTs are proved in \cite{wang1975limit, wang1977central,wang1977gaussian}  for population models, including an epidemic SIR model which has an infection rate dependent on the number of infectious individuals and general infectious periods for newly infected individuals, and which also has the remaining infectious periods of the initially infected individuals dependent on the elapsed infectious durations.  
A FLLN for the associated measure-valued processes was established  for the SIR model with the infection rate dependent on time and  the number of infected individuals using Stein's method in \cite{reinert1995asymptotic}.   
 For the general SIS, SIR, SEIR and SIRS models with a constant infection rate and the associated multi-patch models,  both FLLNs and FCLTs are recently established in \cite{PP-2020,PP-2020b}. 
 The results in  \cite{PP-2020}  are similar to those in  \cite{wang1975limit, wang1977central, wang1977gaussian} for the SIR  model with general infectious periods, since the limits are given as Volterra integral equations, deterministic in the FLLNs and stochastic driven by Gaussian processes in the FCLTs. 
 However, the proofs are rather different, and the FCLTs in \cite{PP-2020} do not require any condition on the c.d.f. of the infectious periods, while those in  \cite{wang1975limit, wang1977central, wang1977gaussian} assume a $\bC^1$ condition. 
 As mentioned at the beginning, the model with varying infectivity studied in this paper is much more general than that in \cite{PP-2020}. It is also quite different from the models in \cite{wang1975limit, wang1977central, wang1977gaussian, reinert1995asymptotic} since an infection rate dependent on the number of infectious individuals  captures neither the randomness of infectivity as the i.i.d. random infectivity function of each individual, nor the age of infection dependent infectivity of each individual.  
  Distinct from \cite{PP-2020}, this work focuses on the convergence of the aggregate infectivity process and its joint convergence with the compartment counting processes, which is much more involved as discussed above. 
For other aspects of epidemic models with general infectious periods, including  the Sellke construction and the final size of an epidemic,   we refer the readers to \cite{sellke1983asymptotic, ball1986unified,ball1993final,barbour1975duration} and  the recent survey \cite{britton2018stochastic}.

\smallskip

The paper is organized as follows. 
In Section~\ref{sec-SIRmodel}, we first describe the generalized SIR model with varying infectivity and state the limit theorems.  Here the infected periods possibly include the combined exposed and infectious durations.
We then describe a generalized SEIR model with separate exposed and infectious 
compartments in Section \ref{sec-model}, and state the limit theorems for this model.
We provide a detailed proof for the FCLT of the generalized SEIR model in Sections \ref{sec-Proofs} and \ref{sec-EIR-proof}. 
We first state several technical preliminaries in Subsection \ref{sec-proofs-tech}, including the criterion for moment estimate of the supremum of stochastic processes in $\bD$, the formulas of the Laplace functional and moments of stochastic integrals with respect to PRMs, and an estimate of the increments of the infectivity function. We then proceed to the proofs of the convergence of the number of susceptible individuals and the aggregate infectivity process in Subsections \ref{sec-rep-roadmap}-\ref{sec-thm21-proof}, with a proof roadmap given in Subsection \ref{sec-rep-roadmap}. The joint convergence of the exposed, infectious and recovered processes together with the two processes mentioned above are given in Section \ref{sec-EIR-proof}. 
 Finally, the FCLTs for the generalized SIR and SIRS models are stated in Sections \ref{sec-SIS} and \ref{sec-SIRS}, respectively.

 \subsection{Notation} 
 Throughout the paper, $\NN$ denotes the set of natural numbers, and $\R^k (\R^k_+)$ denotes the space of $k$-dimensional vectors
with  real (nonnegative) coordinates, with $\R (\R_+)$ for $k=1$.  For $x,y \in\R$, denote $x\wedge y = \min\{x,y\}$ and $x\vee y = \max\{x,y\}$. 
Let  $\bD=\bD([0,+\infty), \R)$ denote the space of $\R$--valued c{\`a}dl{\`a}g functions defined on $[0,+\infty)$. Throughout the paper, convergence in $\bD$ means convergence in the  Skorohod $J_1$ topology, see chapter 3 of \cite{billingsley1999convergence}. 
 Also, $\bD^k$ stands for the $k$-fold product equipped with the product topology.   We use $\bC$ to denote be the subset of $\bD$ consisting of continuous functions and  $\bC^1$ the subset of differentiable functions whose derivative is continuous.

 All random variables and processes are defined on a common complete probability space $(\Omega, \mathcal{F}, \P)$. The notation $\RA$ means convergence in distribution.  The notation $x \lesssim y$ means that there exists some constant $c$ such that $x \le cy$ for $x,y \in \RR$.  We use $\bone(\cdot)$ or $\bone_{\{\cdot\}}$ for indicator function.

We write $F(t) = \int_0^t F(ds)$ for a cumulative distribution function (c.d.f.) $F$ on $\R_+$.  
 For any measure $\mu$ on $\R$ and $f$ a measurable and $\mu$--integrable function, the integral 
 $\int_a^bf(t)\mu(dt)$ will mean $\int_{(a,b]}f(t)\mu(dt)$.

We use the following conventional notation in the paper: for the scaled processes and quantities (all indexed by the population size $N$) and their limits in the FLLN, we use an upper bar, and for those in the FCLT, we use a hat. 
 This means that for any process $X^N$, we let $\bar{X}^N=N^{-1} X^N$, and $\hat{X}^N:=\sqrt{N} (\bar{X}^N- \bar{X})$.
For the processes and quantities associated with the initial quantities, we use a super index `0'.  The letters $S, E, I, R$ with different indices (e.g., index `0' for the initial quantities) represent the Susceptible, Exposed, Infectious and Recovered compartments.  In the case of the $S I R$ model, $I$ stands for Infected.

\section{The models and Main Results}\label{sec-main}

\subsection{Generalized SIR model with varying infectivity}\label{sec-SIRmodel}

In this subsection, $S$ stands  for the compartment of susceptible individuals, $I$ for infected (either exposed, and not yet infectious, or infectious) and $R$ for Recovered. Those who are recovered have a permanent immunity, 
and do not play a role in the epidemic anymore. Those who died from the disease are placed in that $R$ compartment. 

Let the population size be $N$, and $S^N(t), I^N(t), R^N(t)$  be the numbers of susceptible, infected and recovered individuals at each time $t$, respectively.  We have the balance equation $N= S^N(t) +
 I^N(t) + R^N(t)$ for $t\ge 0$. We assume that $R^N(0)=0$, $S^N(0)>0$ and $I^N(0) >0$.  If there are recovered individuals at time $0$, we can suppress them from the population, since they play no role in the epidemic. 
 
 An individual who gets infected (i.e., jumps from the $S$ to the $I$ compartment) at time $t>0$ will have at time $s>t$ an infection age infectivity $\lambda(s-t)$.
 The function $\lambda$ is random, in the sense that each individual experiences a specific realization of that 
 random function (due to the specificity of each individual's immune system reaction to the illness). We assume that the realizations associated to the various individuals are i.i.d. 
 Let 
\begin{align} \label{eqn-chi}
\chi = \inf\{s>0,\ \lambda(r)=0, \text{ for all }r>s\}.
\end{align}
 The time interval $[t,t+\chi)$ is the time interval during which the individual is infected (including either exposed or infectious). 
At time $t+\chi$ the individual jumps from the $I$ into the $R$ compartment. 
 Let $\{\lambda_i(t),\ t\ge0\}$ denote the infectivity function of the $i$--th infected individual after time $0$. We assume that the collection of random functions $\{\lambda_i\}_{i\ge1}$ is i.i.d., and hence, the corresponding infected periods $\{\chi_i\}_{i\ge 1}$ are also i.i.d.

 The $I^N(0)$ individuals who are infected at time $0$ are thought of as having been infected in the past (at some time $t<0$).  Therefore their random infectivity functions $\{\lambda^0_j(t),\ t\ge0\}_{1\le j\le I^N(0)}$, starting at time $t=0$, are i.i.d., and their common law differs for that of $\lambda_i$. We also assume that
 the random functions $\lambda^0_1,\ldots,\lambda^0_{I^N(0)},\lambda_1,\lambda_2,\ldots$ are independent.

 Also, let us define, for $j=1,\dots,I^N(0)$, 
\[\chi^0_j=\inf\{t>0,\ \lambda^0_j(s)=0,\ \forall s>t\}\,, \]
which is the remaining infected period for the $j$-th initially infected individual. By assumption, the collections of variables $\{\chi^0_j\}$ and $\{\chi_i\}$ are also independent.  
Let $\Phi_0$ and $\Phi$ be the c.d.f.'s for the variables $\{\chi^0_j\}$ and $\{\chi_i\}$, respectively. Also let $\Phi_0^c =1- \Phi_0$ and $\Phi^c=1-\Phi$. 

 \begin{remark}
 One possible choice for $\{\lambda^0_j,\ 1\le j\le I^N(0)\}$ is as follows. Suppose we are given a sequence of i.i.d.
 random functions $\{\lambda_i\}_{i\in\ZZ}$. As above, $\lambda_i(t)$, for $i\ge1$, describes the infectivity function of the $i$--th infected individual after time $0$. To each $1\le j\le I^N(0)$, we associate the random function
 $\lambda_{-j}(t)$, and a positive random variable $\varkappa_j$, which represents the infection age at time $0$ of the initially infected individual $j$, which we assume to satisfy  $\varkappa_j<\chi_{-j}$. For $1\le j\le I^N(0)$, we let
 $\lambda^0_j(t)=\lambda_{-j}(\varkappa_j+t)$.
 \end{remark}

 For $t>0$, let $A^N(t)$ denote the number of individuals who have been infected on the time interval $(0,t]$. This process can be expressed as
 \begin{align} \label{eqn-An-rep-1}
A^N(t)=\int_0^t\int_0^\infty{\bf1}_{u\le \Upsilon^N(s^-)}Q(ds,du)\,, \quad t \ge 0, 
\end{align}
where 
\begin{align} \label{eqn-Phi}
\Upsilon^N(t) := \frac{S^N(t)}{N} \mathfrak{I}^N(t)\,, 
\end{align}
is the instantaneous infection rate function at time $t$, and 
$Q$ is a standard Poisson random measure (PRM)\footnote{$Q$ is a sum of Dirac measures
at countably many points in $\R^2_+$, in such a way that if $A_1,\ldots,A_k$ are disjoint Borel subsets of  $\R^2_+$,
the number of those points are mutually independent, while the number of points in a Borel set $A\subset\R^2_+$
follows the Poisson distribution with parameter the Lebesgue measure of $A$. } on $\R^2_+$.
In the formula for $\Upsilon^N(t)$, $\mathfrak{I}^N(t)$ is the total force of infection which is exerted on the susceptibles at time $t$.
If $\{\tau^N_i\}_{i\ge1}$ denote the successive times of infection (i.e., the jump times of the counting process 
 $A^N(t)$),  
 \begin{align} \label{eq:FiN}
\mathfrak{I}^N(t) = \sum_{j=1}^{I^N(0)}\lambda^{0}_j(t) 
+\sum_{i=1}^{A^N(t)}\lambda_i (t-\tau^N_i) \,, \quad t \ge 0.
\end{align}
 
Clearly, 
\begin{equation}\label{eq:SN}
S^N(t)=S^N(0)-A^N(t)\,.
\end{equation}

We can now precise the evolution of the numbers of infected and recovered individuals. 
Now the numbers of infected and recovered individuals at time $t$,  $I^N(t)$ and $R^N(t)$, are given by
\begin{align}\label{eq:I+RN}
I^N(t)&=\sum_{j=1}^{I^N(0)}{\bf1}_{t<\chi^0_j}+\sum_{i=1}^{A^N(t)}{\bf1}_{t<\tau^N_i+\chi_i},\\
R^N(t)&=\sum_{j=1}^{I^N(0)}{\bf1}_{t\ge\chi^0_j}+\sum_{i=1}^{A^N(t)}{\bf1}_{t\ge\tau^N_i+\chi_i}\,.
\end{align}

We also make the following assumptions on the random infectivity functions. See the discussions on the assumptions in Section \ref{sec-on-lambda}.

\begin{assumption} \label{AS-lambda-LLN}
The random functions $\lambda(t)$ (resp. $\lambda^0(t)$), of which $\lambda_1(t), \lambda_2(t),\ldots$ (resp. $\lambda^0_1(t), \lambda^0_2(t),\\ \ldots$) are i.i.d. copies, satisfying the following properties.

\begin{itemize}
\item[(i)] There exists a constant $\lambda^*<\infty$ such that $\sup_{t \in [0,T]} \max\{\lambda^0(t), \lambda(t)\} \le \lambda^*$ almost surely. 

\item[(ii)] 
There exist a given number $k\ge1$, a random sequence 
 $0=\xi^0<\xi^1<\cdots<\xi^k=+\infty$
and random functions $\lambda^j\in \bC$, $1\le j\le k$ such that 
\begin{equation} \label{eqn-lambda}
 \lambda(t)=\sum_{j=1}^k\lambda^j(t){\bf1}_{[\xi^{j-1},\xi^j)}(t)\,.
\end{equation}
In addition, for any $T>0$, there exists a deterministic nondecreasing function $\varphi_T \in \bC$ with $\varphi_T(0)=0$ such that $|\lambda^j(t)-\lambda^j(s)|\le \varphi_T(t-s)$ almost surely, for all $0 \le t, s \le T$, $1 \le j \le k$. 
\end{itemize}
 \end{assumption}

In \cite{FPP2020b}, the following functional law of large numbers (FLLN) is established for the renormalized quantities 
\[ \big(\bar{S}^N,\bar{\mathfrak{I}}^N,\bar{I}^N,\bar{R}^N\big):=N^{-1}\big(S^N,\mathfrak{I}^N,I^N,R^N\big)\,.\]
Let $\bar{\lambda}^0(t) =\E[\lambda^0(t)]$ and $\bar{\lambda}(t) =\E[\lambda(t)]$ for $t\ge 0$.

\begin{theorem}\label{th:LLN-SIR}
If $\bar{I}^N(0) \to \bar{I}(0) \in (0,1)$,  under
 Assumption \ref{AS-lambda-LLN}, we have 
\begin{align}
\big(\bar{S}^N,\bar{\mathfrak{I}}^N,\bar{I}^N,\bar{R}^N\big)\to\big(\bar{S},\bar{\mathfrak{I}},\bar{I},\bar{R}\big)
\quad \text{in} \quad \bD^4 \quad \text{as} \quad N \to \infty,
\end{align}
in probability, where 
\begin{align*}
\bar{S}(t)&=\bar{S}(0)-\int_0^t\bar{S}(s)\bar{I}(s)ds,\\
\bar{\mathfrak{I}}(t)&=\bar{I}(0)\bar{\lambda}^0(t)+\int_0^t\bar{\lambda}(t-s)\bar{S}(s)\bar{\mathfrak{I}}(s)ds,\\
\bar{I}(t)&=\bar{I}(0)\Phi_0^c(t)+\int_0^t\Phi^c(t-s)\bar{S}(s)\bar{\mathfrak{I}}(s)ds,\\
\bar{R}(t)&=\bar{I}(0)\Phi_0(t)+\int_0^t\Phi(t-s)\bar{S}(s)\bar{\mathfrak{I}}(s)ds\,.
\end{align*}
\end{theorem}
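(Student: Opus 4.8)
The plan is to represent every aggregate quantity as a stochastic integral against a single \emph{marked} Poisson random measure and then split each such integral into its predictable compensator, which supplies the limiting Volterra drift, and a martingale remainder, which vanishes under the $N^{-1}$ scaling. Concretely, I would enlarge the driving PRM $Q$ on $\R_+^2$ to a PRM $\bQ$ on $\R_+^2\times\bD$ with mean measure $ds\,du\,\mu_\lambda(d\lambda)$, where $\mu_\lambda$ is the law on $\bD$ of a single post-zero infectivity function $\lambda$ (projecting out the $\bD$-coordinate returns $Q$, since $\mu_\lambda$ is a probability measure). The extra coordinate attaches to each infection event its infectivity path, and hence via \eqref{eqn-chi} its infected period $\chi$. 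Writing $\Upsilon^N(s)=N\bar S^N(s)\bar{\mathfrak{I}}^N(s)$, the post-zero part of \eqref{eq:FiN} and of the counts in \eqref{eq:I+RN} all take the common form
\[
\int_{[0,t]\times\R_+\times\bD} g(t-s,\lambda)\,\bone_{u\le\Upsilon^N(s^-)}\,\bQ(ds,du,d\lambda),
\]
with $g(r,\lambda)=\lambda(r)$ for $\mathfrak{I}^N$, $g(r,\lambda)=\bone_{r<\chi}$ for $I^N$, and $g(r,\lambda)=\bone_{r\ge\chi}$ for $R^N$; the use of $\Upsilon^N(s^-)$ makes each integrand predictable.

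Second, I would subtract the compensator. Since $\bQ$ has intensity $ds\,du\,\mu_\lambda(d\lambda)$, the compensator of the displayed integral is $\int_0^t\big(\int_{\bD}g(t-s,\lambda)\,\mu_\lambda(d\lambda)\big)\Upsilon^N(s)\,ds$, and the inner integral equals $\bar\lambda(t-s)$, $\Phi^c(t-s)$, $\Phi(t-s)$ in the three respective cases (the last two because $\mu_\lambda(\chi>r)=\Phi^c(r)$). After dividing by $N$ these reproduce exactly the Volterra drifts in the statement, while the compensated (martingale) integral, scaled by $N^{-1}$, has predictable quadratic variation $N^{-2}\int_0^t\big(\int_{\bD}g^2\,\mu_\lambda\big)\Upsilon^N(s)\,ds=O(N^{-1})$, using that $g$ is bounded (by $\lambda^*$ or $1$) and the uniform a priori bounds $0\le\bar S^N\le 1$ and $0\le\bar{\mathfrak{I}}^N\le\lambda^*$, the latter from Assumption \ref{AS-lambda-LLN}(i) and there being at most $N$ infectious individuals. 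Doob's inequality then kills the martingale remainders uniformly on compacts. The initial-infectivity term $N^{-1}\sum_{j=1}^{I^N(0)}\lambda^0_j$ converges to $\bar I(0)\bar\lambda^0$ by a functional law of large numbers for the i.i.d.\ $\bD$-valued $\lambda^0_j$ together with $\bar I^N(0)\to\bar I(0)$.

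Third, and this is the crux, I would close the coupled loop. At this stage one has, uniformly on $[0,T]$ and up to $o_\P(1)$ errors,
\[
\bar S^N(t)=\bar S^N(0)-\int_0^t\bar S^N(s)\bar{\mathfrak{I}}^N(s)\,ds,\qquad \bar{\mathfrak{I}}^N(t)=\bar I(0)\bar\lambda^0(t)+\int_0^t\bar\lambda(t-s)\bar S^N(s)\bar{\mathfrak{I}}^N(s)\,ds+o_\P(1).
\]
Existence and uniqueness of the limiting pair $(\bar S,\bar{\mathfrak{I}})$ solving the stated system follow from Picard iteration, the contraction being controlled by $\bar S\le 1$, $\bar{\mathfrak{I}}\le\lambda^*$, $\bar\lambda\le\lambda^*$. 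To pass to the limit I would estimate $\sup_{s\le t}\big(|\bar S^N-\bar S|+|\bar{\mathfrak{I}}^N-\bar{\mathfrak{I}}|\big)$: writing $\bar S^N\bar{\mathfrak{I}}^N-\bar S\bar{\mathfrak{I}}=\bar S^N(\bar{\mathfrak{I}}^N-\bar{\mathfrak{I}})+(\bar S^N-\bar S)\bar{\mathfrak{I}}$ and invoking the uniform bounds yields a Gr\"onwall inequality whose inhomogeneous part is the $o_\P(1)$ remainder plus $|\bar S^N(0)-\bar S(0)|$, giving convergence in probability of $(\bar S^N,\bar{\mathfrak{I}}^N)$ to $(\bar S,\bar{\mathfrak{I}})$. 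Because the limits are continuous, $J_1$ convergence reduces to uniform convergence on compacts, which is exactly what Gr\"onwall delivers. The main obstacle is precisely the aggregate infectivity process: unlike the counting processes it is a sum of random \emph{paths} evaluated at random elapsed times, so neither a one-dimensional PRM nor a naive semimartingale decomposition suffices, and it is the marked-PRM representation together with the compensator computation that makes the convolution kernel $\bar\lambda(t-\cdot)$ emerge.

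Finally, with $(\bar S,\bar{\mathfrak{I}})$ identified, the convergences $\bar I^N\to\bar I$ and $\bar R^N\to\bar R$ are immediate from the same representation: their drifts are the continuous functionals $\int_0^t\Phi^c(t-s)\bar S(s)\bar{\mathfrak{I}}(s)\,ds$ and $\int_0^t\Phi(t-s)\bar S(s)\bar{\mathfrak{I}}(s)\,ds$ of the already-known pair, their initial terms converge to $\bar I(0)\Phi_0^c$ and $\bar I(0)\Phi_0$ by the i.i.d.\ law of large numbers applied to $\bone_{t<\chi^0_j}$ and $\bone_{t\ge\chi^0_j}$, and their martingale parts vanish by the same $O(N^{-1})$ estimate. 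The identity $\Phi^c+\Phi=1$ provides the consistency check $\bar I(t)+\bar R(t)=\bar I(0)+\int_0^t\bar S(s)\bar{\mathfrak{I}}(s)\,ds$, matching $\bar I(0)-\big(\bar S(t)-\bar S(0)\big)$.
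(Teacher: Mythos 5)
First, a point of reference: the paper does not actually prove Theorem \ref{th:LLN-SIR} — it is imported from \cite{FPP2020b} — so there is no in-paper proof to match your argument against line by line. Judged on its own terms, your architecture (a marked PRM on $\R_+^2\times\bD$ carrying the infectivity path as a mark, compensation to extract the Volterra drifts, and a Gr\"onwall closure of the coupled $(\bar S,\bar{\mathfrak{I}})$ system followed by the explicit formulas for $\bar I,\bar R$) is the right one, and it is exactly the machinery this paper deploys elsewhere: the PRM $\breve Q$ with mean measure $ds\times\text{Law of }\lambda\times du$ appears verbatim in Section \ref{sec-Proofs}.

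There is, however, a genuine gap at the step ``Doob's inequality then kills the martingale remainders uniformly on compacts.'' For each of $\mathfrak{I}^N$, $I^N$, $R^N$ the integrand $g(t-s,\lambda)$ depends on the terminal time $t$, so the compensated integral $t\mapsto\int_0^t\int g(t-s,\lambda)\bone_{u\le\Upsilon^N(s^-)}\,\wt{Q}(ds,d\lambda,du)$ is \emph{not} a martingale in $t$, and Doob's maximal inequality gives nothing. (This is precisely the obstruction the paper emphasizes for the aggregate infectivity process in its FCLT proof — ``we cannot use a semimartingale decomposition and exploit Doob's maximal inequality'' — and the reason it develops Theorem \ref{th:momentsup} rather than invoking Doob.) The $L^2$ isometry only yields the pointwise bound $\E[(\cdot)^2]=O(N^{-1})$ for each fixed $t$; upgrading to $\sup_{0\le t\le T}$ requires an increment/tightness estimate for the centered sum $N^{-1}\sum_i\big(\lambda_i(t-\tau^N_i)-\bar\lambda(t-\tau^N_i)\big)$, and this is exactly where Assumption \ref{AS-lambda-LLN}(ii) (the decomposition of $\lambda$ into $k$ continuous pieces with a common modulus $\varphi_T$) must enter. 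Your proof never uses (ii), which is a reliable sign that the hard step is missing: without some path regularity of $\lambda$, neither the uniform vanishing of the martingale remainder of $\bar{\mathfrak{I}}^N$ nor the functional LLN in $\bD$ for $N^{-1}\sum_{j\le I^N(0)}\lambda^0_j$ — which you also invoke as a black box, although $(\bD,\|\cdot\|_\infty)$ is non-separable and the Banach-space SLLN does not apply off the shelf — can be expected to hold. The remaining steps (compensator identification, the a priori bounds $\bar S^N\le1$ and $\bar{\mathfrak{I}}^N\le\lambda^*$, Picard uniqueness, Gr\"onwall, and the treatment of $\bar I^N,\bar R^N$ given $(\bar S,\bar{\mathfrak{I}})$) are sound.
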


The purpose of the present paper is to establish a functional central limit theorem (FCLT) for the fluctuations of the stochastic sequence around its deterministic limit. We define the renormalized differences
\begin{align}
\big(\hat{S}^N,\hat{\mathfrak{I}}^N,\hat{I}^N,\hat{R}^N\big):=
\sqrt{N}\big(\bar{S}-\bar{S}^N,\bar{\mathfrak{I}}-\bar{\mathfrak{I}}^N,\bar{I}-\bar{I}^N,\bar{R}-\bar{R}^N\big) \,.
\end{align}
We make the following assumption on the initial conditions.
\begin{assumption} \label{AS-FCLT-SIR}
There exist a deterministic constant $\bar{I}(0) \in (0,1)$ satisfying  and a random variable  $\hat{I}(0)$ such that   $ \hat{I}^N(0) := \sqrt{N} ( \bar{I}^N(0) - \bar{I}(0)) \Rightarrow \hat{I}(0) $ as $N \to \infty$  and  $\sup_N\E[\hat{I}^N(0)^2]<\infty$, and thus   $\E[\hat{I}(0)^2]<\infty$. 
\end{assumption}

Before we state the FCLT for the generalized SIR model, let us state the following definition.
\begin{definition}\label{def-W-SIR}
Let $(\hat{W}_S,\hat{W}_{\mathfrak{I}},\hat{W}_I, \hat{W}_R)$ is a four-dimensional centered Gaussian process, independent of $\hat{I}(0)$, with covariance functions: for $t, t' \ge 0$, 
\begin{align*}
\Cov(\hat{W}_S(t),\hat{W}_S(t'))&=\int_0^{t\wedge t'}\bar{S}(s) \bar{\mathfrak{I}}(s)ds,\\
\Cov( \hat{W}_{\mathfrak{I}}(t), \hat{W}_{\mathfrak{I}}(t')) & =  \bar{I}(0) \Cov(\lambda^{0}(t), \lambda^{0}(t'))  \\
& \quad +  \int_0^{t\wedge t'} \Big( \Cov(\lambda(t-s), \lambda(t'-s))+  \bar{\lambda}(t-s) \bar{\lambda}(t'-s)\Big) \bar{S}(s)\bar{\mathfrak{I}}(s)  ds\,, \\
\Cov(\hat{W}_I(t), \hat{W}_I(t')) &= \bar{I}(0) (\Phi_{0}^c(t\vee t') - \Phi_{0}^c(t) \Phi_{0}^c(t')) +   \int_0^{t\wedge t'} \Phi^c(t\vee t'-s) \bar{S}(s) \bar{\mathfrak{I}}(s) ds, \\ 
\Cov(\hat{W}_R(t), \hat{W}_R(t')) &=  \bar{I}(0) (\Phi_{0}(t\wedge t') - \Phi_{0}(t) \Phi_{0}(t'))  +  \int_0^{t\wedge t'} \Phi(t\wedge t'-s) \bar{S}(s) \bar{\mathfrak{I}}(s) ds , \\
\Cov(\hat{W}_S(t),\hat{W}_{\mathfrak{I}}(t')) &= -\int_0^{t\wedge t'} \bar{\lambda}(t'-s)  \bar{S}(s)\bar{\mathfrak{I}}(s)  ds\,,  \\
\Cov( \hat{W}_S(t),  \hat{W}_I(t')) &= - \int_0^{t \wedge t'} \Phi^c(t'-s)  \bar{S}(s) \bar{\mathfrak{I}}(s) ds\,, \\
\Cov( \hat{W}_S(t),  \hat{W}_R(t')) &=  - \int_0^{t \wedge t'}   \Phi(t'-s)  \bar{S}(s) \bar{\mathfrak{I}}(s) ds\,, \\
\Cov(\hat{W}_{\mathfrak{I}}(t),\hat{W}_I(t')) & = \bar{I}(0) \big(  \E \big[ \lambda^{0}(t) \bone_{\chi^{0}>t'} \big]  -  \bar\lambda^{0}(t) \Phi^c_{0}(t') \big)   +   \int_0^{t \wedge t'}  \Big( \E \big[\lambda(t-s) \bone_{ t'-s<\chi}\big]  \Big) \bar{S}(s) \bar{\mathfrak{I}}(s) ds,\\
\Cov(\hat{W}_{\mathfrak{I}}(t),\hat{W}_R(t')) & =  \bar{I}(0) \big(  \E \big[ \lambda^{0,I}(t) \bone_{\chi^{0}\le t'} \big]  -  \bar\lambda^{0}(t) \Phi_{0}(t') \big)   +  \int_0^{t \wedge t'}  \Big( \E \big[\lambda(t-s) \bone_{\chi\le t' -s}\big]  \Big) \bar{S}(s) \bar{\mathfrak{I}}(s) ds,
  \\
\Cov(\hat{W}_I(t), \hat{W}_R(t')) &= \bar{I}(0) ( (\Phi_{0}(t') - \Phi_{0}(t)) \bone(t'\ge t) - \Phi^c_{0}(t) \Phi_{0}(t') ) \\
& \quad + \bone(t'\ge t)  \int_0^{t}  (\Phi(t'-s) - \Phi(t-s)) \bar{S}(s) \bar{\mathfrak{I}}(s) ds . 
\end{align*}
\end{definition}

We shall establish below the following FCLT result, for which we impose the following further conditions on the random function $\lambda(t)$. 

\begin{assumption} \label{AS-lambda}
In addition to the conditions in Assumption \ref{AS-lambda-LLN}, the following conditions hold. 

\begin{itemize}
\item[(i)] There exist nondecreasing functions $\phi$ and $\psi$ in $\bC$ and $\alpha>1/2$ and $\beta>1$ such that for all $0 \le r\le s \le t $, denoting $\breve{\lambda}^0(t) = \lambda^0(t) - \bar{\lambda}^0(t)$, 
\begin{align*}
(a) \quad &  \E\big[ \big(\breve{\lambda}^0(t)  -\breve{\lambda}^0(s)\big)^2 \big] \le (\phi(t) - \phi(s) )^\alpha\,,\\
(b) \quad & \E\big[ \big(\breve{\lambda}^0(t)  -\breve{\lambda}^0(s)\big)^2 \big(\breve{\lambda}^0(s)  -\breve{\lambda}^0(r)\big)^2 \big] \le (\psi(t) - \psi(r) )^\beta. 
\end{align*}

\item[(ii)] 
 The function $\varphi_T$ from Assumption \ref{AS-lambda-LLN} (ii) satisfies
\begin{equation}\label{eqn-lambda-inc}
\varphi_T(t)\le C t^\alpha\,.
\end{equation} 
Also, if $F_j$ denotes the c.d.f. of the r.v. $\xi^j$, there exist $C'$ and  
 $\rho>0$ such that for any $1\le j\le k-1$, $0\le s<t$,
 \begin{equation}\label{hypF}
 F_j(t)-F_j(s)\le C'(t-s)^\rho\,,
 \end{equation}
 and in addition, for any  $1\le j\le k-1$, $r>0$,
 \begin{equation}\label{increments}
 \P(\xi^{j}-\xi^{j-1}\le r|\xi^{j-1})\le C' r^\rho\,.
 \end{equation}
\end{itemize}
 \end{assumption}

\begin{theorem} \label{th:CLT-SIR}
Under Assumptions  \ref{AS-FCLT-SIR} and  \ref{AS-lambda}, 
\[ \big(\hat{S}^N,\hat{\mathfrak{I}}^N,\hat{I}^N,\hat{R}^N\big)\Rightarrow\big(\hat{S},\hat{\mathfrak{I}},\hat{I},\hat{R}\big)\, \qinq \bD^4 \qasq N \to \infty,\]
where the process $\big(\hat{S},\hat{\mathfrak{I}}\big)$ is the unique solution of the SDE
\begin{align*}
\hat{S}(t)&=-\hat{I}(0)+\hat{W}_S(t)+\int_0^t\hat{\Upsilon}(s)ds,\\
\hat{\mathfrak{I}}(t)&=\hat{I}(0)\bar{\lambda}^0(t)+\hat{W}_{\mathfrak{I}}(t)+\int_0^t\bar{\lambda}(t-s)\hat{\Upsilon}(s)ds,
\end{align*}
where
\begin{equation}
\hat{\Upsilon}(t)=\hat{S}(t)\bar{\mathfrak{I}}(t)+\bar{S}(t)\hat{\mathfrak{I}}(t)\,.
\end{equation}
Moreover, the pair $\big(\hat{I}^N,\hat{R}^N\big)$ is given as
\begin{align*}
\hat{I}^N(t)&=\hat{I}^N(0)\Phi^c_0(t)+\hat{W}_I(t)+\int_0^t \Phi^c(t-s)\hat{\Upsilon}(s)ds,\\
\hat{R}^N(t)&=\hat{I}^N(0)\Phi_0(t)+\hat{W}_R(t)+\int_0^t \Phi(t-s)\hat{\Upsilon}(s)ds\,.
\end{align*}
\end{theorem}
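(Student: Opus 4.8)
The plan is to exploit the Poisson-random-measure representations \eqref{eqn-An-rep-1}--\eqref{eq:FiN} of the counting and infectivity processes, extracting for each scaled fluctuation a drift term (converging to the Volterra integral) and a centered term (converging to the Gaussian driver), and to treat the two ``hard'' processes $\hat S^N$ and $\hat{\mathfrak I}^N$ jointly before reading off $\hat I^N,\hat R^N$. I would start with $\hat S^N$, which admits a genuine semimartingale structure: writing $\tilde Q=Q-ds\,du$ for the compensated PRM, \eqref{eqn-An-rep-1} gives $A^N(t)=M^N_A(t)+\int_0^t\Upsilon^N(s)\,ds$ with $M^N_A(t)=\int_0^t\int_0^\infty \mathbf 1_{u\le\Upsilon^N(s^-)}\tilde Q(ds,du)$ a square-integrable martingale of predictable quadratic variation $\int_0^t\Upsilon^N(s)\,ds$. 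Since $S^N=S^N(0)-A^N$ and $N^{-1}\Upsilon^N=\bar S^N\bar{\mathfrak I}^N$ by \eqref{eqn-Phi}, dividing by $N$, subtracting the FLLN identity of Theorem~\ref{th:LLN-SIR}, and linearizing $\bar S^N\bar{\mathfrak I}^N$ about $(\bar S,\bar{\mathfrak I})$ yields
\[
\hat S^N(t)=-\hat I^N(0)+\hat M^N_S(t)+\int_0^t\hat\Upsilon^N(s)\,ds+o_{\mathbb P}(1),\qquad \hat M^N_S:=N^{-1/2}M^N_A,\ \ \hat\Upsilon^N:=\hat S^N\bar{\mathfrak I}+\bar S\,\hat{\mathfrak I}^N,
\]
where $\hat I^N(0)\RA\hat I(0)$ under Assumption~\ref{AS-FCLT-SIR}, while $\hat M^N_S$ is tight by Doob's inequality and converges to the Gaussian $\hat W_S$, whose variance $\int_0^{t\wedge t'}\bar S\bar{\mathfrak I}\,ds$ is exactly the limit of its quadratic variation.

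The heart of the argument is $\hat{\mathfrak I}^N$, for which no martingale is available. Here I would lift \eqref{eq:FiN} to a marked PRM $\bar Q$ on $\R_+\times\R_+\times\bD$ whose $\bD$-valued mark $f$ is drawn from the law $\nu$ of $\lambda$, so that $\sum_{i\le A^N(t)}\lambda_i(t-\tau^N_i)=\int_0^t\int_0^\infty\int_{\bD} f(t-s)\,\mathbf 1_{u\le\Upsilon^N(s^-)}\,\bar Q(ds,du,df)$; using the \emph{same} $\bar Q$ to carry both the thinning variable $u$ and the trajectory $f$ is the device that makes the joint convergence of all four processes tractable. Splitting this integral into its compensated part and its compensator $\int_0^t\bar\lambda(t-s)\Upsilon^N(s)\,ds$ (integrating out $u$ and $f$), dividing by $N$ and linearizing as above produces the drift $\int_0^t\bar\lambda(t-s)\hat\Upsilon^N(s)\,ds$, while the initial sum $\sum_{j\le I^N(0)}\lambda^0_j$ contributes the term $\hat I(0)\bar\lambda^0(t)$ together with an i.i.d.\ CLT term of covariance $\bar I(0)\Cov(\lambda^0(t),\lambda^0(t'))$. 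The centered stochastic integral is the candidate for $\hat W_{\mathfrak I}$; its two covariance contributions $\Cov(\lambda(t-s),\lambda(t'-s))$ and $\bar\lambda(t-s)\bar\lambda(t'-s)$ in Definition~\ref{def-W-SIR} reflect, respectively, the randomness of the marks $f$ and the Poisson fluctuation in the number of points. Following the paper, I would freeze the rate, replacing the random $\Upsilon^N$ by its FLLN limit $N\bar S\bar{\mathfrak I}$ to form auxiliary processes, prove these are asymptotically equivalent to the true ones via Theorem~\ref{th:LLN-SIR} and the uniform bound $\lambda\le\lambda^*$ of Assumption~\ref{AS-lambda-LLN}(i), and analyze the auxiliary processes directly.

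For the auxiliary processes I would prove tightness by the moment method, verifying the three-point increment criterion \eqref{3pts}--\eqref{3pts-moments} through the mixed second-moment formula \eqref{mixed2moment} for integrals against $\bar Q$: Assumption~\ref{AS-lambda}(i)(a)--(b) supply precisely the $\alpha$- and $\beta$-exponent bounds on $\E[(\breve\lambda^0(t)-\breve\lambda^0(s))^2]$ and on the mixed fourth moment, while the piecewise structure \eqref{eqn-lambda} together with the jump-time estimates \eqref{hypF}--\eqref{increments} of Assumption~\ref{AS-lambda}(ii) bounds the contribution of the discontinuities $\xi^j$. Convergence of the finite-dimensional distributions I would obtain from the Laplace functional of $\bar Q$, computing the joint characteristic function and matching it to the centered Gaussian law with the covariances of Definition~\ref{def-W-SIR}. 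Tightness and fidi convergence give $(\hat S^N,\hat{\mathfrak I}^N)\RA(\hat S,\hat{\mathfrak I})$, and passing to the limit in the two decompositions identifies this pair as a solution of the stated two-dimensional linear stochastic Volterra equation; existence and uniqueness hold because the equation is linear with bounded kernels ($1$ and $\bar\lambda(t-s)$) and bounded coefficients $\bar S,\bar{\mathfrak I}$, so eliminating $\hat\Upsilon$ and applying a Gronwall estimate closes the system. Finally, the indicator representations \eqref{eq:I+RN} have exactly the same form against $\bar Q$ with $\mathbf 1_{t-s<\chi(f)}$ in place of $f(t-s)$, where $\chi(f)$ is the lifetime functional and $\int_{\bD}\mathbf 1_{t-s<\chi(f)}\,\nu(df)=\Phi^c(t-s)$; their compensators produce $\int_0^t\Phi^c(t-s)\hat\Upsilon(s)\,ds$ and $\int_0^t\Phi(t-s)\hat\Upsilon(s)\,ds$, the initial terms $\hat I^N(0)\Phi^c_0(t)$ and $\hat I^N(0)\Phi_0(t)$, and centered remainders converging to $\hat W_I,\hat W_R$, so the joint convergence of all four processes follows by the continuous mapping theorem once $\hat\Upsilon$ is known.

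The step I expect to be the main obstacle is the tightness of $\hat{\mathfrak I}^N$. Because it carries no semimartingale decomposition, Doob's maximal inequality is unavailable and one cannot pass from bounds on increments to a bound on $\E[\sup_t(\cdot)^2]$ in the usual way; this is exactly the gap filled by the paper's new interchange result, Theorem~\ref{th:momentsup}, and the delicate point will be verifying its moment hypothesis \eqref{3pts-moments} for the compensated $\bar Q$-integrals uniformly in $N$, which is where the full strength of Assumption~\ref{AS-lambda} is consumed.
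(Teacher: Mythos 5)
Your proposal follows essentially the same route as the paper: the paper proves the SEIR analogue (Theorem \ref{thm-FCLT}) in detail in Sections \ref{sec-Proofs}--\ref{sec-EIR-proof} and obtains the SIR case by the same argument, using exactly your decomposition into the martingale part of $A^N$, the mark-randomness and Poisson-count fluctuation terms $\hat{\mathfrak{I}}^N_1,\hat{\mathfrak{I}}^N_2$ of a marked PRM, the frozen-rate auxiliary processes, the mixed second-moment tightness criterion, and Theorem \ref{th:momentsup} for the $\sup$-moment bound. The only cosmetic difference is that the paper works with the exact identity $\hat\Upsilon^N=\hat S^N\bar{\mathfrak{I}}^N+\bar S\,\hat{\mathfrak{I}}^N$ rather than a linearization with an $o_{\mathbb P}(1)$ remainder, which avoids having to control that remainder separately.
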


\subsection{Generalized SEIR model with varying infectivity } \label{sec-model}

In this section, we consider a more detailed model by studying the separate compartments for the exposed and infectious individuals, which were combined into the infected compartment in the previous section. Changing our notation, we use now $I^N(t)$ to denote the number of infectious individuals at time $t$ in this model. And we  add $E^N(t)$ to denote the number of exposed individuals at time $t$.  The processes  $S^N(t)$ and $ R^N(t)$ 
again represent the numbers of susceptible and recovered individuals at time $t$, respectively. We have the balance equation $N= S^N(t) +
E^N(t)+ I^N(t) + R^N(t)$ for $t\ge 0$. Assume that $R^N(0)=0$, $S^N(0)>0$ and $I^N(0) >0$.  

In this model, the infected period $\chi$ in \eqref{eqn-chi} is split into the exposed and infectious periods: for each newly infected individual $i$, 
\begin{equation} \label{eqn-lambda-eta}
\zeta_i=\inf\{t>0,\ \lambda_i(t)>0\},\quad\text{and } \quad \eta_i=\inf\{t>0,\ \lambda_i(\zeta_i+r)=0,\ \forall r\ge t\}\,.\end{equation}
That is, $\chi_i = \zeta_i+\eta_i$ for each $i \ge 1$. 

However, for the initially infected individuals, we encounter two groups: initially exposed individuals (going through the remaining exposed period and then the infectious period) and 
initially infectious individuals (only going through the remaining infectious period). Unlike the newly infected individuals as described above in section \ref{sec-SIRmodel}, we will use two separate collections of variables and quantities to describe the dynamics of these two groups, which is necessary. 

Let $\{\lambda^{0}_j(\cdot)\}$ and $\{\lambda^{0,I}_k(\cdot)\}$ be the infectivity processes associated with each initially exposed and initially infectious individual, respectively.  Assume that the sequence $\{\lambda^{0}_j(\cdot)\}$ is i.i.d., and so are $\{\lambda^{0,I}_k(\cdot)\}$. 
The remaining exposed period and the infectious period $(\zeta^0_j,\eta^0_j)$ of an initially exposed individual
are given as: 
\begin{equation} \label{eqn-lambda-eta-0}
\zeta^0_j=\inf\{t>0,\ \lambda^0_j(t)>0\}>0, \quad\text{and } \quad \eta^0_j=\inf\{t>0,\ \lambda^0_j(\zeta^0_j+r)=0,\ \forall r\ge t\}, 
\end{equation}
and the remaining infectious period $\eta^{0,I}_k$ of an initially infectious individual (note that $ \inf\{t>0,\ \lambda^{0,I}_k(t)>0\}=0$) is: 
 \begin{equation} \label{eqn-lambda-eta-0I}
\quad \eta^{0,I}_k=\inf\{t>0,\ \lambda^{0,I}_k(r)=0,\ \forall r\ge t\}.
 \end{equation}
 
Under the i.i.d. assumptions of the corresponding infectivity processes, the random vectors $\{(\zeta_i,\eta_i):i \in \NN\}$ and $\{(\zeta^0_j,\eta^0_j):j \in \NN\}$ are i.i.d., and so is the sequence $\{\eta^{0,I}_k: k\in \NN\}$.  In addition, these three sequences are mutually independent.

Let $H(du,dv)$ denote the law of $(\zeta,\eta)$, $H_0(du,dv)$ that of $(\zeta^0,\eta^0)$ and $F_{0,I}$ the c.d.f. of $\eta^{0,I}$. Define 
\begin{align*}
\Phi(t)&:=\int_0^t\int_0^{t-u}H(du,dv)=\P(\zeta+\eta\le t),\quad \\
 \Psi(t) &:=\int_0^t\int_{t-u}^{\infty}H(du,dv)=\P(\zeta\le t<\zeta+\eta),\\
\Phi_0(t)&:=\int_0^t\int_0^{t-u}H_0(du,dv)=\P(\zeta^0+\eta^0\le t),\quad \\
 \Psi_0(t)& :=\int_0^t\int_{t-u}^{\infty}H_0(du,dv)=\P(\zeta^0\le t<\zeta^0+\eta^0),\\
 F_{0,I}(t) &:=\P(\eta^{0,I}\le t).
\end{align*}
(Note that $\Phi(t)$ and $\Phi_0(t)$ were used for the infected periods in the previous section for the generalized SIR model; if the infected periods have both exposed and infectious periods, then they have the same meanings as defined above in the generalized SEIR model. See Remark \ref{rem-comparison} for further discussions.)  
We write $ H(du,dv)=G(du)F(dv|u)$ and $H_0(du,dv)=G_0(du)F_0(dv|u)$,
i.e., $G$ is the c.d.f. of $\zeta$ and $F(\cdot|u)$ is the conditional law of $\eta$, given that $\zeta=u$, 
$G_0$ is the c.d.f. of $\zeta^0$ and $F_0(\cdot|u)$ is the conditional law of $\eta^0$, given that $\zeta^0=u$.
In the case of independent exposed and infectious periods, it is reasonable that the infectious periods of the initially exposed individuals have the same distribution as the newly exposed ones, that is, $F_{0}=F$. Note that in the independent case, $\Psi(t) = G(t) - \Phi(t)$ and $\Psi_0(t) = G_0(t) - \Phi_0(t)$. 
Also, let $G^c_0=1-G_0$, $G^c=1-G$, $F^c_{0,I}=1-F_{0,I}$, and $F^c=1-F$. 
 
We again use $A^N(t)$ to denote the number of individuals that are exposed in $(0,t]$, and $\tau^N_i$ to denote the time at which the $i^{\rm th}$ individual gets exposed.  Then $A^N(t)$ has the same expression as in \eqref{eqn-An-rep-1}. 
However, by taking into account the detailed initial conditions with initially exposed and infectious individuals, the total force of infection $\mathfrak{I}^N(t)$ at time $t$ is expressed as 
\begin{align} \label{eqn-mf-I}
\mathfrak{I}^N(t) =  \sum_{j=1}^{E^N(0)}\lambda^0_j(t) +\sum_{k=1}^{I^N(0)}\lambda^{0,I}_k(t) 
+\sum_{i=1}^{A^N(t)}\lambda_i (t-\tau^N_i) \,, \quad t \ge 0.
\end{align}
The epidemic dynamics of the model can be described by 
\begin{align*}
S^N(t) &\,=\,S^N(0)-A^N(t)\,, \\
E^N(t)&\,=\, \sum_{j=1}^{E^N(0)}{\bf1}_{\zeta^0_j>t}+\sum_{i=1}^{A^N(t)}{\bf1}_{\tau^N_i+\zeta_i>t}\,,\\
 I^N(t) &\,=\, \sum_{j=1}^{E^N(0)}{\bf1}_{\zeta^0_j\le t<\zeta^0_j+\eta^0_j}+\sum_{k=1}^{I^N(0)}{\bf1}_{\eta^{0,I}_k>t}+\sum_{i=1}^{A^N(t)}{\bf1}_{\tau^N_i+\zeta_i\le t<\tau^N_i+\zeta_i+\eta_i}\,, \\
  R^N(t) &\,=\, \sum_{j=1}^{E^N(0)}{\bf1}_{\zeta^0_j+\eta^0_j\le t}+\sum_{k=1}^{I^N(0)}{\bf1}_{\eta^{0,I}_k\le t}+\sum_{i=1}^{A^N(t)}{\bf1}_{\tau^N_i+\zeta_i+\eta_i\le t}\,. 
\end{align*}

\begin{remark} \label{rem-I-expression}
We remark that in addition to the extra care in the initial conditions, 
since we only count the infectious individuals in $I^N(t)$, the third term in the expression of $I^N(t)$ differs from the second term in the expression of $I^N(t)$ in \eqref{eq:I+RN} in the generalized SIR model. 
\end{remark}

\begin{remark} \label{rem-comparison}
We remark that the model in the previous section includes the special case without any exposed periods. Specifically,  the random infectivity function $\lambda_i(t)$ does not equal to zero immediately after time $0$, that is, an infected individual is immediately infectious, so $\zeta=\zeta^0=0$ a.s., and there are no exposed individuals, $E^N(t) =0$ for all $t\ge 0$. 
In that case, the FLLN and FCLT results in Theorems \ref{th:LLN-SIR} and \ref{th:CLT-SIR} hold with $\Phi$ being equal to $F$, the c.d.f. of the infectious period in this section. 

On the other hand, the results for the SIR model without exposed periods can be also obtained by setting $G$ to have mass 1 at 0 so that $\Phi(t)=F(t)$ and $\bar{E}(0)=0$ in the following two theorems.

\end{remark}

In \cite[Theorem 2.1]{FPP2020b}, the following FLLN is proved. 
\begin{theorem} 
Suppose that the conditions in Assumption \ref{AS-lambda-LLN} hold for $\{\lambda^{0}_j(\cdot)\}$, $\{\lambda^{0,I}_k(\cdot)\}$ and $\{\lambda_i(\cdot)\}$. 
If there exist deterministic constants $\bar{E}(0),\bar{I}(0) \in [0,1)$ such that $0< \bar{E}(0)+\bar{I}(0)<1$, and
 $(\bar{E}^N(0),\bar{I}^N(0))\to (\bar{E}(0),\bar{I}(0)) \in \RR^2_+$ in probability as $N\to\infty$, 
  \begin{equation} \label{eqn-FLLN-conv-SEIR}
\big(\bar{S}^N, \bar{\mathfrak{I}}^N, \bar{I}^N, \bar{E}^N, \bar{R}^N\big) 
\to \big(\bar{S}, \bar{\mathfrak{I}}, \bar{I}, \bar{E}, \bar{R}\big) \qinq \bD^5 \qasq N \to \infty\,,
\end{equation}
in probability.  
The limit $(\bar{S},\bar{\mathfrak{I}})$ is the unique solution of the following system of integral equations: 
\begin{align}
\bar{S}(t)&=1- \bar{E}(0)-\bar{I}(0)-\int_0^t\bar{S}(s)\bar{\mathfrak{I}}(s)ds\,, \label{eqn-barS}\\
\bar{\mathfrak{I}}(t)&=\bar{E}(0)\bar{\lambda}^0(t) + \bar{I}(0)\bar{\lambda}^{0,I}(t)+\int_0^t\bar{\lambda}(t-s)\bar{S}(s)\bar{\mathfrak{I}}(s)ds\,,\label{eqn-barfrakI}
\end{align}
and the limits $(\bar{E},\bar{I},\bar{R})$ are given by the following formulas:
\begin{align*}
 \bar{E}(t) &=\bar{E}(0)G_0^c(t)+\int_0^tG^c(t-s)\bar{S}(s)\bar{\mathfrak{I}}(s)ds\,, \\
 \bar{I}(t) &= \bar{I}(0) F_{0,I}^c(t) + \bar{E}(0)\Psi_0(t)  +\int_0^t \Psi(t-s) \bar{S}(s)\bar{\mathfrak{I}}(s)ds\,, \\
\bar{R}(t)&=\ \bar{I}(0) F_{0,I}(t) + \bar{E}(0) \Phi_0(t)  +\int_0^t \Phi(t-s) \bar{S}(s)\bar{\mathfrak{I}}(s)ds\, . 
\end{align*}
We also have $\bar{\Upsilon}^N \to \bar{\Upsilon}$ in $\bD$ in probability as $N \to \infty$, where 
\begin{equation} \label{eqn-barUpsilon}
\bar{\Upsilon}(t) := \bar{S}(t)\bar{\mathfrak{I}}(t)\,, \quad t \ge 0\,. 
\end{equation}
\end{theorem}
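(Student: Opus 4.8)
The plan is to represent every scaled quantity as an integral against Poisson random measures, decompose each into its compensator plus a martingale, show that the martingales and the initial-condition fluctuations are asymptotically negligible uniformly on compact time intervals, and then close a fixed-point (Gronwall) argument for the coupled pair $(\bar S^N,\bar{\mathfrak I}^N)$. Since all the limits are continuous, it suffices to prove uniform convergence in probability on each $[0,T]$, which upgrades to $J_1$-convergence in $\bD^5$. First I would treat the susceptible process: from $S^N=S^N(0)-A^N$ and \eqref{eqn-An-rep-1}, writing $\Upsilon^N(s)=N\bar S^N(s)\bar{\mathfrak I}^N(s)$, the compensator of $A^N$ equals $N\int_0^\cdot\bar S^N(s)\bar{\mathfrak I}^N(s)\,ds$, so $\bar S^N(t)=\bar S^N(0)-\int_0^t\bar S^N(s)\bar{\mathfrak I}^N(s)\,ds-N^{-1}M^N_A(t)$ with $M^N_A$ the compensated-PRM martingale. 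Using $\Upsilon^N\le N\lambda^*$, Doob's inequality gives $\E[\sup_{t\le T}(N^{-1}M^N_A(t))^2]\le 4\lambda^* T/N\to 0$.

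Next I would handle the aggregate infectivity. I would enrich the picture by attaching to each accepted infection its own random function, i.e.\ introduce a PRM $Q^\lambda$ on $\RR_+\times\RR_+\times\bD$ with intensity $ds\,du\,\mathbb{P}_\lambda(d\ell)$, so that the newly-infected contribution to $\mathfrak I^N$ becomes $\int_0^t\!\int_0^\infty\!\int_{\bD}\ell(t-s)\,\bone_{u\le\Upsilon^N(s^-)}\,Q^\lambda(ds,du,d\ell)$. For each fixed $t$, the process $r\mapsto\int_0^r\cdots$ on $[0,t]$, with the integrand $\ell(t-s)$ frozen at $t$, is a martingale whose quadratic variation at $r=t$ is at most $(\lambda^*)^2N\int_0^t\bar\Upsilon^N(s)\,ds$; hence $N^{-1}$ times it has quadratic variation $O(1/N)$ and vanishes in $L^2$ for each $t$. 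Combined with the LLN for the i.i.d.\ initial sums, which replaces $N^{-1}\sum_j\lambda^0_j(t)$ by $\bar E^N(0)\bar\lambda^0(t)$ and similarly for the initially infectious group, this shows that $\bar{\mathfrak I}^N(t)$ equals, up to a negligible error, $\bar E^N(0)\bar\lambda^0(t)+\bar I^N(0)\bar\lambda^{0,I}(t)+\int_0^t\bar\lambda(t-s)\bar S^N(s)\bar{\mathfrak I}^N(s)\,ds$. To upgrade this pointwise control to uniformity in $t$ I would invoke the modulus bound in Assumption \ref{AS-lambda-LLN}(ii): the oscillation of $\ell(t-s)$ in $t$ is dominated by $\varphi_T$, so the supremum over $t\le T$ is controlled by finitely many grid points plus a uniformly small oscillation term.

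With this approximate system in hand I would establish existence, uniqueness and continuous dependence for the limiting Volterra pair \eqref{eqn-barS}--\eqref{eqn-barfrakI}. Using the a priori bounds $0\le\bar S\le 1$ and $\bar{\mathfrak I}\le\lambda^*$, the solution map is, after iteration, a contraction, so a Gronwall estimate applied to the difference of $(\bar S^N,\bar{\mathfrak I}^N)$ and $(\bar S,\bar{\mathfrak I})$ converts the negligibility of the error terms together with $(\bar E^N(0),\bar I^N(0))\to(\bar E(0),\bar I(0))$ into $\sup_{t\le T}\big(|\bar S^N-\bar S|+|\bar{\mathfrak I}^N-\bar{\mathfrak I}|\big)\to 0$ in probability; in particular $\bar\Upsilon^N\to\bar\Upsilon=\bar S\bar{\mathfrak I}$.

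Finally, the compartment counts follow from the convergence of $\bar\Upsilon^N$. For $\bar E^N$, the initial-exposed sum converges by the LLN to $\bar E(0)G^c_0(t)$ since $\P(\zeta^0>t)=G^c_0(t)$, while the newly-infected sum, written against $Q$ marked now by $\zeta_i$, has compensator $\int_0^t G^c(t-s)\Upsilon^N(s)\,ds$ and hence converges to $\int_0^t G^c(t-s)\bar S(s)\bar{\mathfrak I}(s)\,ds$, its martingale vanishing by the same $L^2$/Doob estimate. The identical scheme with marks $(\zeta,\eta)$ and the events defining $\Psi,\Phi$, together with $F_{0,I},\Psi_0,\Phi_0$ for the initial groups, yields the stated formulas for $\bar I^N$ and $\bar R^N$. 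The main obstacle is the aggregate infectivity term: the self-referential dependence of $\Upsilon^N$ on $\mathfrak I^N$ forces the fixed-point/Gronwall argument, and the double appearance of $t$ — in the upper limit $A^N(t)$ and in the argument $\lambda_i(t-\tau^N_i)$ — prevents a single clean martingale in $t$, so uniform-in-$t$ convergence must be extracted through the continuity modulus $\varphi_T$ rather than a maximal inequality.
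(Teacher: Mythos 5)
First, a point of reference: the paper does not prove this theorem at all --- it is imported verbatim from \cite[Theorem 2.1]{FPP2020b}, so there is no in-paper proof to compare against. Judged on its own terms, your strategy (PRM representation, compensator--martingale decomposition, Doob/$L^2$ estimates for the martingale parts, and a Gronwall closure of the coupled Volterra pair $(\bar S^N,\bar{\mathfrak I}^N)$, followed by the compartment counts as functionals of $\bar\Upsilon^N$) is the natural one and is consistent with the machinery this paper deploys for the FCLT. The pointwise-in-$t$ estimates, the fixed-point/uniqueness argument for \eqref{eqn-barS}--\eqref{eqn-barfrakI}, and the treatment of the compartment processes are all sound.

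There is, however, a concrete gap at exactly the step the paper identifies as the crux, namely uniformity in $t$ of the aggregate infectivity error. You assert that ``the oscillation of $\ell(t-s)$ in $t$ is dominated by $\varphi_T$.'' Under Assumption \ref{AS-lambda-LLN}(ii) this is false: $\lambda$ is only piecewise continuous, cf.\ \eqref{eqn-lambda}, and across the random times $\xi^j$ its oscillation is of order $\lambda^*$, not $\varphi_T(\delta)$ (this is precisely the content of Lemma \ref{lem-barlambda-inc-bound}, which carries the extra terms $\lambda^*\sum_j\bone_{s<\xi^j\le t}$). For the FLLN you cannot invoke \eqref{hypF} or \eqref{increments}, since the theorem is supposed to hold without them. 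The jump contribution on a grid interval $[t_i,t_i+\delta]$, say $Y^N_i=N^{-1}\sum_{\iota:\tau^N_\iota\le t_i}\sum_j\bone_{t_i-\tau^N_\iota<\xi^j_\iota\le t_i+\delta-\tau^N_\iota}$, has expectation $O(\delta)$ by \eqref{estim-int-F}, but a first-moment union bound over the $O(1/\delta)$ grid intervals then yields only an $O(1)$ bound for $\max_i Y^N_i$, so your ``finitely many grid points plus a uniformly small oscillation term'' does not close as stated. The step can be repaired --- conditionally on the arrival times the $\xi^j_\iota$ are independent, so $\mathrm{Var}(Y^N_i)\lesssim \delta/N+\delta^2$ and a Chebyshev union bound gives $\max_iY^N_i\to0$ --- but this extra second-moment argument (or some substitute) is genuinely needed and is missing. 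A secondary, less consequential issue: your opening claim that ``all the limits are continuous'' is unwarranted, since no continuity is imposed on the laws of the $\xi^j$, $\zeta^0$, $\eta^{0,I}$ at the FLLN level, so $\bar\lambda$, $G_0$, $F_{0,I}$ and hence $\bar{\mathfrak I},\bar E,\bar I,\bar R$ may have jumps; this does not sink the argument (locally uniform convergence implies $J_1$-convergence regardless of continuity of the limit, and the empirical-type terms converge uniformly by Glivenko--Cantelli-type arguments even to discontinuous limits), but the justification as written is wrong.
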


We make the following assumption on the initial quantities for the FCLT.  
\begin{assumption} \label{AS-FCLT}
There exist deterministic constants  $\bar{E}(0), \bar{I}(0) \in [0,1)$ satisfying $\bar{E}(0)+\bar{I}(0)>0$ and random variables  $\hat{E}(0), \hat{I}(0)$ such that   $(\hat{E}^N(0), \hat{I}^N(0)) := \sqrt{N} (\bar{E}^N(0) - \bar{E}(0), \bar{I}^N(0) - \bar{I}(0)) \Rightarrow (\hat{E}(0), \hat{I}(0)) $ as $N \to \infty$. Moreover  $\sup_N\E[\hat{E}^N(0)^2]<\infty$,  $\sup_N\E[\hat{I}^N(0)^2]<\infty$, and thus  $\E[\hat{E}(0)^2]<\infty$, $\E[\hat{I}(0)^2]<\infty$. 
\end{assumption}

\begin{definition}\label{def-W}
Let $(\hat{W}_S,\hat{W}_{\mathfrak{I}},\hat{W}_E, \hat{W}_I, \hat{W}_R)$ is a five-dimensional centered Gaussian process, independent of $(\hat{E}(0),\hat{I}(0))$, with covariance functions: for $t, t' \ge 0$, 
\begin{align*}
\Cov( \hat{W}_{\mathfrak{I}}(t), \hat{W}_{\mathfrak{I}}(t')) & = \bar{E}(0) \Cov(\lambda^0(t), \lambda^0(t'))+ \bar{I}(0) \Cov(\lambda^{0,I}(t), \lambda^{0,I}(t'))  \\
& \quad +  \int_0^{t\wedge t'} \Big( \Cov(\lambda(t-s), \lambda(t'-s))+  \bar{\lambda}(t-s) \bar{\lambda}(t'-s)\Big) \bar{S}(s)\bar{\mathfrak{I}}(s)  ds\,, \\
 \Cov(\hat{W}_E(t), \hat{W}_E(t')) &=  \bar{E}(0) (G_0^c(t\vee t') - G^c_0(t) G^c_0(t')) +  \int_0^{t\wedge t'} G^c(t\vee t'-s) \bar{S}(s) \bar{\mathfrak{I}}(s) ds,  \\
\Cov(\hat{W}_I(t), \hat{W}_I(t')) &= \bar{I}(0) (F_{0,I}^c(t\vee t') - F_{0,I}^c(t) F_{0,I}^c(t')) + \bar{E}(0) \bigg(  \int_0^{t\wedge t'} F_0^c(t\vee t'-s|s) G_0(ds) - \Psi_0(t) \Psi_0(t') \bigg)   \\
& \quad +   \int_0^{t\wedge t'}\int_0^{t\wedge t'-s} F^c(t\vee t'-s-u|u)G(du)  \bar{S}(s) \bar{\mathfrak{I}}(s) ds, \\ 
\Cov(\hat{W}_R(t), \hat{W}_R(t')) &=  \bar{I}(0) (F_{0,I}(t\wedge t') - F_{0,I}(t) F_{0,I}(t')) +  \bar{E}(0) \left( \Phi_0(t\wedge t') -   \Phi_0(t) \Phi_0(t')  \right) \\
& \quad +  \int_0^{t\wedge t'} \Phi(t\wedge t'-s) \bar{S}(s) \bar{\mathfrak{I}}(s) ds , \\
\Cov( \hat{W}_S(t),  \hat{W}_E(t')) &= -\int_0^{t \wedge t'}   G^c(t'-s) \bar{S}(s) \bar{\mathfrak{I}}(s) ds\,, \,
\Cov( \hat{W}_S(t),  \hat{W}_I(t')) = - \int_0^{t \wedge t'} \Psi(t'-s)  \bar{S}(s) \bar{\mathfrak{I}}(s) ds\,, \\
\Cov(\hat{W}_{\mathfrak{I}}(t),\hat{W}_E(t')) & = \bar{E}(0) \big(\E \big[ \lambda^{0}(t) \bone_{\zeta^{0}> t'} \big] -\bar\lambda^{0}(t) G^c_{0}(t')   \big)  +  \int_0^{t \wedge t'}  \Big( \E \big[\lambda(t-s) \bone_{\zeta>t'-s}\big]  \Big) \bar{S}(s) \bar{\mathfrak{I}}(s) ds, 
 \\
\Cov(\hat{W}_{\mathfrak{I}}(t),\hat{W}_I(t')) & = \bar{I}(0) \big(  \E \big[ \lambda^{0,I}(t) \bone_{\eta^{0,I}>t'} \big]  -  \bar\lambda^{0,I}(t) F^c_{0,I}(t') \big) + \bar{E}(0) \big( \E \big[   \lambda^{0}(t) \bone_{\zeta^{0}\le t'<\zeta^{0} + \eta^{0}} \big]  - \bar\lambda^{0}(t)  \Psi_{0}(t') \big) \\
& \quad +  \int_0^{t \wedge t'}  \Big( \E \big[\lambda(t-s) \bone_{\zeta\le t'-s<\zeta+\eta}\big]  \Big) \bar{S}(s) \bar{\mathfrak{I}}(s) ds,\\
\Cov(\hat{W}_{\mathfrak{I}}(t),\hat{W}_R(t')) & =  \bar{I}(0) \big(  \E \big[ \lambda^{0,I}(t) \bone_{\eta^{0,I}\le t'} \big]  -  \bar\lambda^{0,I}(t) F_{0,I}(t') \big) + \bar{E}(0) \big( \E \big[  \lambda^{0}(t) \bone_{\zeta^{0} + \eta^{0}\le t'}\big]  -\bar\lambda^{0}(t)\Phi_{0}(t') \big)  \\
& \quad  +  \int_0^{t \wedge t'}  \Big( \E \big[\lambda(t-s) \bone_{\zeta+\eta\le t' -s}\big]  \Big) \bar{S}(s) \bar{\mathfrak{I}}(s) ds,
\end{align*} 

\begin{align*}
\Cov(\hat{W}_E(t), \hat{W}_I(t')) &= \bar{E}(0)\bone(t'\ge t) \bigg( \int_t^{t'}  F^c_0(t'-s|s)G_0(ds) - G_0^c(t) \Psi_0(t') \bigg)  \\
& \quad + \bone(t'\ge t)  \int_0^{t\wedge t'}  \int_{t-s}^{t'-s} F^c(t'-s-u|u) G(du)  \bar{S}(s) \bar{\mathfrak{I}}(s) ds,  \\
\Cov(\hat{W}_I(t), \hat{W}_R(t')) &= \bar{I}(0) ( (F_{0,I}(t') - F_{0,I}(t)) \bone(t'\ge t) - F^c_{0,I}(t) F_{0,I}(t') ) \\
& \quad +  \bar{E}(0) \bone(t'\ge t) \bigg( \int_0^{t} (F_0(t'-s|s) - F_0(t-s|s))G_0(ds) - \Psi_0(t) \Phi_0(t') \bigg) \\
& \quad + \bone(t'\ge t)  \int_0^{t\wedge t'} \int_0^{t-s}  (F(t'-s-u|u) - F(t-s-u|u)) G(du) \bar{S}(s) \bar{\mathfrak{I}}(s) ds, \\
\Cov(\hat{W}_E(t), \hat{W}_R(t')) &=  \bar{E}(0) \bone(t'\ge t) \bigg(\int_t^{t'}F_{0}(t'-s|s) G_0(ds) - G_0^c(t)  \Phi_0(t')\bigg) \\
& \quad +  \bone(t'\ge t)  \int_0^{t\wedge t'}  \int_{t-s}^{t'-s} F(t'-s-u|u) G(du) \bar{S}(s) \bar{\mathfrak{I}}(s)) ds,
\end{align*}
and $\Cov(\hat{W}_S(t),\hat{W}_S(t'))$, $\Cov(\hat{W}_S(t),\hat{W}_{\mathfrak{I}}(t'))$, and $\Cov( \hat{W}_S(t),  \hat{W}_R(t'))$  are the same as those in Definition \ref{def-W-SIR}. 
\end{definition}

\begin{theorem} \label{thm-FCLT}
Under Assumption \ref{AS-lambda} with $\lambda^{0,I}(t)$ also being bounded and satisfying the conditions in (ii), and  under Assumption \ref{AS-FCLT},  
\begin{equation} \label{eqn-hatSfrakI-conv}
\big(\hat{S}^N, \hat{\mathfrak{I}}^N,\hat{E}^N, \hat{I}^N, \hat{R}^N)\RA \big(\hat{S},\hat{\mathfrak{J}},\hat{E}, \hat{I}, \hat{R}\big) \qinq \bD^5 \qasq N \to \infty\,. 
\end{equation}
The limit process $(\hat{S},\hat{\mathfrak{I}})$ is the unique solution to the following system of stochastic integral equations: 
\begin{align}
\hat{S}(t) &= - \hat{E}(0)  -\hat{I}(0) + \hat{W}_S(t) + \int_0^t\hat{\Upsilon}(s)ds,  \label{eqn-hatS}\\
\hat{\mathfrak{I}}(t) &= \hat{I}(0) \bar{\lambda}^{0,I}(t) + \hat{E}(0) \bar{\lambda}^0(t)+ \hat{W}_{\mathfrak{I}}(t)  + \int_0^t \bar{\lambda}(t-s) \hat{\Upsilon}(s) ds,   \label{eqn-frakI}
\end{align}
where
\begin{align} \label{def-hatUpsilon}
\hat{\Upsilon}(t)  =  \hat{S}(t)  \bar{\mathfrak{I}}(t) + \bar{S}(t)  \hat{\mathfrak{I}}(t),
\end{align}
and $\bar{S}(t)$ and $ \bar{\mathfrak{I}}(t)$ are given by the unique solutions to the integral equations \eqref{eqn-barS} and \eqref{eqn-barfrakI}.
The limit processes  $\hat{E}$, $\hat{I}$ and $\hat{R}$ are given by the expressions: 
\begin{align*}
\hat{E}(t) &= \hat{E}(0) G_0^c(t) + \hat{W}_E(t)   + \int_0^t G^c(t-s)  \hat{\Upsilon}(s) ds,\\
\hat{I}(t) &= \hat{I}(0) F_{0,I}^c(t) + \hat{E}(0) \Psi_0(t) +  \hat{W}_{I}(t)  + \int_0^t \Psi(t-s)  \hat{\Upsilon}(s) ds, \\
\hat{R}(t) &=  \hat{I}(0) F_{0,I}(t) + \hat{E}(0) \Phi_0(t) +  \hat{W}_{R}(t)  + \int_0^t \Phi(t-s)  \hat{\Upsilon}(s) ds, 
\end{align*}

$\hat{S}$ has continuous paths, and 
if $\bar{\lambda}^0$ and $\bar{\lambda}^{0,I}$ are in $\bC$, which implies that $G_0$ and $F_{0,I}$ are continuous,  then $\hat{\mathfrak{I}}$, $\hat{E}$, $\hat{I}$ and $\hat{R}$ are continuous. 
\end{theorem}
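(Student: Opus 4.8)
The plan is to split the five-dimensional convergence \eqref{eqn-hatSfrakI-conv} into two tasks: (i) the joint convergence of the pair $(\hat{S}^N,\hat{\mathfrak{I}}^N)$ together with the Gaussian fluctuations that drive it, and (ii) the representation of $(\hat{E}^N,\hat{I}^N,\hat{R}^N)$ as asymptotically continuous Volterra functionals of $\hat{\Upsilon}^N$ and of independent fluctuation terms. The engine is a PRM representation. Writing $A^N$ as in \eqref{eqn-An-rep-1} and enlarging the driving PRM to a measure $\bar{Q}$ on $[0,\infty)\times[0,\infty)\times\bD$ whose third coordinate is the infectivity path drawn from its law, the newly-infected contribution to \eqref{eqn-mf-I} becomes a stochastic integral of $\lambda(t-s)\bone_{u\le\Upsilon^N(s^-)}$ against $\bar{Q}$, whose compensator is $\int_0^t\bar{\lambda}(t-s)\Upsilon^N(s)\,ds$. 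Subtracting this compensator and dividing by $\sqrt{N}$ decomposes $\hat{\mathfrak{I}}^N$ into a drift part converging to $\int_0^t\bar{\lambda}(t-s)\hat{\Upsilon}(s)\,ds$, where $\hat{\Upsilon}^N$ is the diffusion-scaled fluctuation of $\bar{\Upsilon}^N=\bar{S}^N\bar{\mathfrak{I}}^N$ and linearizes to $\hat{\Upsilon}=\hat{S}\bar{\mathfrak{I}}+\bar{S}\hat{\mathfrak{I}}$ modulo the quadratic remainder $N^{-1/2}\hat{S}^N\hat{\mathfrak{I}}^N\to 0$, plus a fluctuation part $\hat{\mathfrak{M}}^N$ equal to $N^{-1/2}$ times a stochastic integral against the compensated measure $\tilde{\bar{Q}}$; this last term is the source of $\hat{W}_{\mathfrak{I}}$. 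The counting process $S^N=S^N(0)-A^N$ admits the analogous but genuinely simpler compensated representation, so $\hat{S}^N$ is handled by the martingale FCLT, the scaled counting martingale converging to $\hat{W}_S$ with variance $\int_0^t\bar{S}(s)\bar{\mathfrak{I}}(s)\,ds$, consistent with \eqref{eqn-hatS}.

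The main obstacle is the tightness of the infectivity fluctuation $\hat{\mathfrak{M}}^N$, for which, unlike $\hat{S}^N$, no semimartingale decomposition and no Doob maximal inequality are available. I would first replace $\Upsilon^N(s^-)$ in the integrand by its deterministic FLLN limit $N\bar{\Upsilon}(s)$, producing two auxiliary processes — one from the freshly infected individuals and one from the initial cohorts $\lambda^0,\lambda^{0,I}$ — that decouple the randomness of the integration domain from that of the marks; the replacement error is controlled by the FLLN and the uniform bound $\lambda^*$. For the auxiliary processes I would verify the three-point increment moment bound \eqref{3pts-moments} behind the tightness criterion \eqref{3pts} of Theorem \ref{13.5}, by evaluating the expectation of the product of squared increments over adjacent intervals $[s,t]$ and $[t,t']$ through the mixed second-moment formula \eqref{mixed2moment} for compensated-PRM integrals; this is where Assumption \ref{AS-lambda}(i)(b) and the increment controls \eqref{eqn-lambda-inc}--\eqref{increments} on the jump epochs $\xi^j$ are consumed (Propositions \ref{prop-wt-mfI-1-inc-moment} and \ref{prop-wt-mfI-2-inc-moment}). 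The decisive new tool is Theorem \ref{th:momentsup}, which promotes the three-point bound to the maximal estimate $\E[\sup_{[0,T]}(\cdot)^2]<\infty$ of Lemma \ref{lem-hatPhi-2-boundedness} and \eqref{eqn-hatPhi-2-boundedness-sup-inside} — exactly the control the absent Doob inequality would otherwise furnish.

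For the finite-dimensional distributions I would compute characteristic functions via the Laplace functional of the PRMs. The second technical point is the joint convergence of $\hat{\mathfrak{I}}^N$ with the compartment counts: I would represent the infectivity contributions and the indicators $\bone_{\tau^N_i+\zeta_i>t}$, $\bone_{\tau^N_i+\zeta_i\le t<\tau^N_i+\zeta_i+\eta_i}$ and $\bone_{\tau^N_i+\zeta_i+\eta_i\le t}$ of $E^N,I^N,R^N$ through the \emph{same} enlarged PRM $\bar{Q}$, which is legitimate because $(\zeta_i,\eta_i)$ are measurable functionals of the mark $\lambda_i$ via \eqref{eqn-lambda-eta}. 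This common-PRM calculation (Lemma \ref{lem-3.12}) produces the joint centered Gaussian limit with the cross-covariances recorded in Definition \ref{def-W}; the initially exposed and infectious cohorts contribute the $\bar{E}(0)$ and $\bar{I}(0)$ terms, with $\hat{E}(0),\hat{I}(0)$ entering through Assumption \ref{AS-FCLT}. Passing to the limit in the centered integral equations then identifies $(\hat{S},\hat{\mathfrak{I}})$ as a solution of \eqref{eqn-hatS}--\eqref{eqn-frakI} with $\hat{\Upsilon}$ given by \eqref{def-hatUpsilon}.

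Finally, substituting \eqref{def-hatUpsilon} into \eqref{eqn-hatS}--\eqref{eqn-frakI} yields a two-dimensional \emph{linear} Volterra system whose kernels $\bar{\mathfrak{I}},\bar{S},\bar{\lambda}$ are bounded (by $\lambda^*$ and by $1$) and continuous; existence and uniqueness of its solution in $\bD$ follow from a Picard iteration combined with a Gr\"onwall estimate, and uniqueness of the limit upgrades subsequential convergence to convergence of the full sequence. Given the joint convergence, $\hat{E},\hat{I},\hat{R}$ are obtained from their Volterra representations in terms of $\hat{\Upsilon}$, the independent Gaussian terms $\hat{W}_E,\hat{W}_I,\hat{W}_R$, and the initial contributions, yielding the covariances of Definition \ref{def-W}. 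Continuity of $\hat{S}$ is immediate since $\hat{W}_S$ and the drift integral are continuous; $\hat{\mathfrak{I}},\hat{E},\hat{I},\hat{R}$ then inherit continuity once $\bar{\lambda}^0,\bar{\lambda}^{0,I}\in\bC$ forces $G_0,F_{0,I}$ to be continuous, removing the only jumps in $\hat{W}_{\mathfrak{I}}$ and in the initial-cohort terms. I expect the maximal $\E[\sup]$ tightness bound for $\hat{\mathfrak{M}}^N$, resting on Theorem \ref{th:momentsup} and the mixed-moment computation, to be the crux of the entire argument.
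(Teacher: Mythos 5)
Your proposal follows essentially the same route as the paper's proof: the PRM on $\R_+\times\bD\times\R_+$ with the infectivity path as mark, the auxiliary processes obtained by replacing $\Upsilon^N(s^-)$ with $N\bar{\Upsilon}(s)$, the three-point mixed second-moment bounds via \eqref{mixed2moment} promoted to $\E[\sup]$ control by Theorem \ref{th:momentsup}, characteristic functions via the Laplace functional for the finite-dimensional distributions, the common-PRM device (with $(\zeta_i,\eta_i)$ as measurable functionals of $\lambda_i$) for the joint convergence with $(\hat{E}^N,\hat{I}^N,\hat{R}^N)$, and the continuous-mapping step for the linear Volterra system. The plan is correct and matches the paper's argument in all essential respects.
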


\subsection{On the assumptions on $\lambda(t)$}
\label{sec-on-lambda}

We remark that the conditions in Assumption \ref{AS-lambda}
are not required to establish the FLLN \cite{FPP2020b}.  
It is not surprising that the FCLT requires additional assumptions, compared with the FLLN. 

These additional conditions are required to establish the moment criterion for tightness of the aggregate infectivity process (see Propositions \ref{prop-wt-mfI-1-inc-moment} and \ref{prop-wt-mfI-2-inc-moment} for the moment bounds for the increments of the associated processes). In fact, condition \eqref{increments} on $\{\xi^j\}$ is used only once in the end of the proof of  Proposition \ref{prop-wt-mfI-1-inc-moment}, while \eqref{hypF} is used twice in the proofs of both Proposition \ref{prop-wt-mfI-1-inc-moment} and \ref{prop-wt-mfI-2-inc-moment}. 
The conditions in  (iii) allow fairly general random infectivity functions $\lambda(t)$, which can have c\`adl\`ag paths. 
They evidently impose a restriction on the distribution functions of the exposed and infectious periods in the case where $\lambda$ jumps at the end of those periods, which in particular implies that they are continuous. 
Recall that no condition is required on the distribution functions of the exposed and infectious periods in establishing the FLLNs and FCLTs of the standard SIR and SEIR models in \cite{PP-2020}. 
The restriction on these distributions as a result of  \eqref{hypF} and \eqref{increments} is a compromise of allowing the random infectivity functions to have discontinuities. 

On the other hand, as a special case with $k=1$, $\lambda(\cdot)$ is continuous satisfying \eqref{eqn-lambda-inc}, no conditions of the type \eqref{hypF} and \eqref{increments} has to be imposed. 
In that case, one can allow the durations of the exposed and infectious periods to take discrete values, and as a consequence, 
 the same results of the FLLNs and FCLTs hold without requiring any condition on the distribution functions of those exposed and infectious periods.

 We now describe examples of $\lambda$'s which satisfy the conditions in Assumption \ref{AS-lambda}. 
 We also refer the reader  to
 \cite[Section 2.5]{FPP2020b} for an example of $\lambda$ adapted to the Covid--19 epidemic. 
 
 \begin{example} \label{ex-1}
Suppose we are given a random element $(\zeta,\eta)$ of $(0,+\infty)^2$, and let $\lambda$ be a random function with trajectories in $\bC$ which is such that
\[ \lambda(t) 
\begin{cases} >0, &\text{for $\zeta<t<\zeta+\eta$},\\
=0, & \text{for $t\not\in(\zeta,\zeta+\eta)$},
\end{cases}
\]
and which is bounded and satisfies \eqref{eqn-lambda-inc}. Then such a $\lambda$
satisfies all the conditions in Assumption  \ref{AS-lambda},  and the law of the pair $(\zeta,\eta)$ is completely arbitrary.  This corresponds to the case $k=1$ in Assumption  \ref{AS-lambda} (ii).
 \end{example}

\begin{example} \label{ex-2}
Suppose we are given again a random element $(\zeta,\eta)$ of $(0,+\infty)^2$, and $\lambda$ a random function with now trajectories in $\bD$ of the form
\[ \lambda(t)=\lambda^1(t){\bf1}_{[\zeta,\zeta+\eta)}(t),\]
where $\lambda^1\in\bC$ is bounded and satisfies \eqref{eqn-lambda-inc}. Note that unlike in the above example, 
we allow $\lambda^1(\zeta)>0$ and $\lambda^1(\zeta+\eta)>0$, hence $\lambda(\zeta)>\lambda(\zeta^-)=0$ and $\lambda((\zeta+\eta)^-)>\lambda(\zeta+\eta)=0$. In this case, Assumption \ref{AS-lambda} (ii) is satisfied with 
$k=3$, $\xi^1=\zeta$ and $\xi^2=\zeta+\eta$, provided the distribution functions of both $\zeta$ and $\zeta+\eta$ satisfy  
\eqref{hypF}, and \eqref{increments} holds. We can of course add more jumps on the time interval
$(\xi^1,\xi^2)=(\zeta,\zeta+\eta)$. In the particular case $\lambda^1(\zeta)=0$ but $\lambda^1(\zeta+\eta)>0$,
(resp. $\lambda^1(\zeta)>0$ but $\lambda^1(\zeta+\eta)=0$), we are in the case of Assumption \ref{AS-lambda} (ii) with $k=2$ and $\xi^1=\zeta+\eta$ (resp. $\xi^1=\zeta$), and condition \eqref{hypF} needs to be imposed to that r.v. only, while condition \eqref{increments} is irrelevant in this case. Finally if $\lambda^1(\zeta)=\lambda^1(\zeta+\eta)=0$,
we are back in the previous example.
\end{example}

\section{Proofs for the Convergence of $(\hat{S}^N, \hat{\mathfrak{I}}^N)$ } \label{sec-Proofs}

In this section we prove the convergence of $(\hat{S}^N, \hat{\mathfrak{I}}^N) \to (\hat{S}, \hat{\mathfrak{I}})$ in $\bD^2$ in Theorem \ref{thm-FCLT}. The joint convergence with  $(\hat{E}^N, \hat{I}^N, \hat{R}^N)$ will be studied in the next section.  
We first provide the following technical preliminaries used in the proofs.

\subsection{Technical Preliminaries}\label{sec-proofs-tech}

In this subsection, we gather a few technical results which will be used later in our proofs.
Recall the moment criterion \eqref{moment3points} (which is (13.14) of \cite{billingsley1999convergence}. We will first deduce from that condition a uniform moment estimate.

Suppose now that a sequence $\{X^N\}$ satisfies the condition \eqref{3points} (which is (13.13) in \cite{billingsley1999convergence}). Under an additional mild condition, it implies that  $\{X^N\}$ is tight in $\bD$, hence in particular that for any $T>0$, $\sup_{0\le t\le T}|X^N(t)|$ is a tight sequence of $\R_+$--valued r.v.'s. We now show how we can deduce a bound on the second moment of that supremum.

\begin{theorem}\label{th:momentsup}
Let $X^N$ be a sequence in $\bD([0,T])$. Suppose that the two following conditions are satisfied 
\begin{align}\label{supmoment}
\sup_{N\ge1}\E\left[(X^N(0))^2+(X^N(T))^2\right]<\infty,
\end{align}
and that for some $\alpha>1/2$, $\beta>1/2$, and
for all $0\le r\le s\le t\le T$, $N\ge1$, $\lambda>0$,
\begin{align}\label{3pts}
\P\left(|X^N(s)-X^N(r)|\wedge|X^N(t)-X^N(s)|\ge\lambda\right)\le\frac{[G(t)-G(r)]^{2\alpha}}{\lambda^{4\beta}}\,,
\end{align}
where $G$ is a nondecreasing continuous function satisfying $G(0)=0$.  
Then
\begin{align}\label{momentsup}
\sup_{N\ge1}\E\bigg[\sup_{0\le t\le T}|X^N(t)|^2\bigg]<\infty\,.
\end{align}
\end{theorem}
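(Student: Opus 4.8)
The plan is to turn the three-point tightness bound \eqref{3pts} into a tail estimate for the running maximum, and then integrate that tail against $2\lambda\,d\lambda$ to recover the second moment of the supremum. Write $M_N:=\sup_{0\le t\le T}|X^N(t)-X^N(0)|$. Since $\sup_{0\le t\le T}|X^N(t)|^2\le 2(X^N(0))^2+2M_N^2$ and $\sup_N\E[(X^N(0))^2]<\infty$ by \eqref{supmoment}, it suffices to show $\sup_N\E[M_N^2]<\infty$, which reduces \eqref{momentsup} to a uniform bound on $\E[M_N^2]$.

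The engine is a discrete maximal inequality. Fix a finite grid $0=t_0<\cdots<t_m=T$ and set $u_\ell=G(t_\ell)-G(t_{\ell-1})\ge0$, so that $G(t_k)-G(t_i)=u_{i+1}+\cdots+u_k$ for grid indices $i\le j\le k$. Then \eqref{3pts} is exactly the partial-sum hypothesis
\[
\P\big(|X^N(t_j)-X^N(t_i)|\ge\lambda,\ |X^N(t_k)-X^N(t_j)|\ge\lambda\big)\le\frac{(u_{i+1}+\cdots+u_k)^{2\alpha}}{\lambda^{4\beta}}\,.
\]
Because $2\alpha>1$, the combinatorial maximal inequality underlying the tightness criterion (of the type in the proof of \cite[Theorem 13.5]{billingsley1999convergence}) yields a constant $K=K_{\alpha,\beta}$, independent of the grid and of $N$, with
\[
\P\Big(\max_{0\le j\le m}\big(|X^N(t_j)-X^N(0)|\wedge|X^N(T)-X^N(t_j)|\big)\ge\lambda\Big)\le\frac{K\,(G(T)-G(0))^{2\alpha}}{\lambda^{4\beta}}\,.
\]

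Next comes an elementary decomposition comparing the running maximum to the single increment $X^N(T)-X^N(0)$. On the grid put $M_N^g:=\max_j|X^N(t_j)-X^N(0)|$. If some $|X^N(t_{j^*})-X^N(0)|\ge\lambda$ while $|X^N(T)-X^N(0)|<\lambda/2$, then $|X^N(T)-X^N(t_{j^*})|\ge\lambda/2$, so the minimum at $j^*$ is $\ge\lambda/2$; hence $\{M_N^g\ge\lambda\}\subseteq\{|X^N(T)-X^N(0)|\ge\lambda/2\}\cup\{\max_j(\cdots)\ge\lambda/2\}$. Using $\E[(M_N^g)^2]=\int_0^\infty 2\lambda\,\P(M_N^g\ge\lambda)\,d\lambda$ and the identity $\int_0^\infty 2\lambda\,\P(|X^N(T)-X^N(0)|\ge\lambda/2)\,d\lambda=4\,\E[(X^N(T)-X^N(0))^2]$, the endpoint part is uniformly bounded by \eqref{supmoment}, while the oscillation part is $\int_0^\infty 2\lambda\,\P(\max_j(\cdots)\ge\lambda/2)\,d\lambda$, which after splitting at $\lambda=1$ and inserting the maximal inequality has integrand $\lesssim\lambda^{1-4\beta}$ on $[1,\infty)$ and is finite precisely because $4\beta>2$ (i.e.\ $\beta>1/2$). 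All bounds are uniform in the grid and in $N$. Finally, taking nested grids whose union is dense in $[0,T]$ and contains $T$, the c\`adl\`ag property of $X^N$ gives $M_N^g\uparrow M_N$ a.s., and monotone convergence transfers the uniform bound to $\E[M_N^2]$, giving \eqref{momentsup}.

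The main obstacle is the maximal inequality of the second step, which is the genuine chaining input (and is where $\alpha>1/2$ is needed); everything else is bookkeeping. The one point requiring care in that bookkeeping is that one must \emph{not} feed a Chebyshev bound $\P(|X^N(T)-X^N(0)|\ge\lambda/2)\lesssim\lambda^{-2}$ into the tail integral — that would diverge logarithmically — but instead evaluate $\int_0^\infty 2\lambda\,\P(|X^N(T)-X^N(0)|\ge\lambda/2)\,d\lambda$ exactly as $4\,\E[(X^N(T)-X^N(0))^2]$, so that \eqref{supmoment} alone controls the endpoint contribution.
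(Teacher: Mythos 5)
Your proposal is correct and follows essentially the same route as the paper: both reduce the running maximum to the endpoint values plus the three-point oscillation $|X^N(s)-X^N(0)|\wedge|X^N(T)-X^N(s)|$, control the supremum of the latter via the Billingsley Theorem 10.3 maximal inequality (which you re-derive through finite grids and monotone convergence rather than citing the continuous-time version directly), and then integrate the resulting tail bound, using $\beta>1/2$ for convergence and \eqref{supmoment} for the endpoint contribution.
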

As noted in \cite{billingsley1999convergence}, a sufficient condition for \eqref{3pts} is that 
\begin{align} \label{3pts-moments}
\E\left[|X^N(s)-X^N(r)|^{2\beta} |X^N(t)-X^N(s)|^{2\beta}\right]\le  [G(t)-G(r)]^{2\alpha}\,.
\end{align}
We shall use this result with $\beta=1$. Condition \eqref{3pts} with any $\beta\ge0$ plus a minor condition, much weaker than the following reinforced version of \eqref{supmoment}:
\begin{equation}\label{suptE}
\sup_{N\ge1}\sup_{0\le t\le T}\E\bigg[|X^N(t)|^2\bigg]<\infty
\end{equation}
 implies tightness of the sequence $X^N$. Note that going from \eqref{suptE} to \eqref{momentsup} is usually easy to achieve if $X^N$ is a semimartingale, using Doob's inequality for the martingale part. In our non Markovian setup, we do not have enough martingales at our disposal, and the above theorem will be useful to us.
\begin{proof}
Define for each $N\ge1$ and $0\le r\le s\le t\le T$,  the r.v.'s 
\[ M^N_{r,s,t}:=|X^N(s)-X^N(r)|\wedge|X^N(t)-X^N(s)|,\quad L^N=\sup_{0\le r\le s\le t\le T}M^N_{r,s,t}\,.\]
 Theorem 10.3 in \cite{billingsley1999convergence} says that, provided that the process $X^N(t)$ is right continuous, \eqref{3pts} implies that there exists a constant $K_{\alpha,\beta}$ such that for all $N\ge1$, $x>0$,
 \begin{equation}\label{LN}
  \P(L^N\ge x)\le K_{\alpha,\beta}\frac{G(T)^{2\alpha}}{x^{4\beta}}\,.
 \end{equation}
 Since $X^N(s)\le X^N(0)\vee X^N(T)+M^N_{0,s,T}$ and $\sup_{0\le s\le T}M^N_{0,s,T}\le L^N$,
 \begin{align*}
 \E\left(\sup_{0\le t\le T}|X^N(t)|^2\right)&\le3\E\left(|X^N(0)|^2\right)
 +3 \E\left(|X^N(T)|^2\right)
 +3\int_0^\infty\P\left(L^N>\sqrt{x}\right)dx\\
 &\le 3\E\left(|X^N(0)|^2\right)+3 \E\left(|X^N(T)|^2\right)
 +3\int_0^\infty1\wedge\frac{K_{\alpha,\beta}G(T)^{2\alpha}}{x^{2\beta}}dx\\
 &\le3\E\left(|X^N(0)|^2+|X^N(T)|^2\right)+\frac{6\beta}{2\beta-1}K_{\alpha,\beta}^{1/2\beta}G(T)^{\alpha/\beta}\,,
 \end{align*}
 where we have used \eqref{LN} for the second inequality. The result now follows from \eqref{supmoment} and the fact that $2\beta>1$.
 \end{proof}

We shall also use the following technical lemma.

\begin{lemma}\label{Lem-20}
Let $\{X^N\}_{N\ge1}$ be a sequence of random elements in $\bD$ such that $X^N(0)=0$.
If for all $T>0$, $\ep>0$, as $\delta\to0$,
\begin{align*}
\limsup_{N}\sup_{0\le t\le T}\frac{1}{\delta}\P\bigg(\sup_{0\le u\le \delta}|X^N(t+u)-X^N(t)|>\ep\bigg)\to0,
\end{align*}
 then the sequence $X^N$ is tight in $\bD$.
 \end{lemma}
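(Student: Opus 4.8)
The plan is to reduce the statement to the standard tightness criterion in $\bD$ (Theorem 13.2 of \cite{billingsley1999convergence}), namely uniform control of the supremum together with smallness of the Skorohod modulus $w'$. The crucial observation is that, because of the weight $1/\delta$ in front of the probability, the hypothesis is in fact strong enough to control the \emph{ordinary} modulus of continuity
\[
w(X^N,\delta):=\sup_{0\le s\le t,\ t-s\le\delta}|X^N(t)-X^N(s)|=\sup_{0\le t}\ \sup_{0\le u\le\delta}|X^N(t+u)-X^N(t)|,
\]
and that this ordinary modulus dominates the Skorohod one through the elementary inequality $w'(x,\delta)\le w(x,2\delta)$.

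The first and main step is to bound $\P(w(X^N,\delta)\ge 2\ep)$. Fixing $T>0$ and $\ep>0$, I would partition $[0,T]$ by the grid $s_j=j\delta$, $0\le j\le m:=\lceil T/\delta\rceil$; any pair $s\le t$ with $t-s\le\delta$ lies in a block $[s_{j-1},s_{j+1})$, so both $X^N(s)-X^N(s_{j-1})$ and $X^N(t)-X^N(s_{j-1})$ are forward increments of $X^N$ started at $s_{j-1}$ over a window of length at most $2\delta$. This yields
\[
w(X^N,\delta)\le 2\max_{1\le j\le m}\sup_{0\le v\le 2\delta}|X^N(s_{j-1}+v)-X^N(s_{j-1})|,
\]
and hence, writing $\rho_N(\delta,\ep):=\sup_{0\le t\le T}\tfrac1\delta\,\P\big(\sup_{0\le u\le\delta}|X^N(t+u)-X^N(t)|>\ep\big)$, a union bound gives
\[
\P\big(w(X^N,\delta)\ge2\ep\big)\le\sum_{j=1}^m\P\Big(\sup_{0\le v\le2\delta}|X^N(s_{j-1}+v)-X^N(s_{j-1})|>\ep\Big)\le 2(T+\delta)\,\rho_N(2\delta,\ep).
\]
The point is that the factor $1/\delta$ built into $\rho_N$ exactly cancels the $m\sim T/\delta$ terms of the union bound, leaving a bound that is a constant multiple of $\rho_N(2\delta,\ep)$. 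Taking $\limsup_N$ and then $\delta\to0$, the hypothesis delivers $\lim_{\delta\to0}\limsup_N\P(w(X^N,\delta)\ge2\ep)=0$ for every $\ep>0$.

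Given this modulus estimate, the remaining steps are routine. For compact containment I would fix $\delta$ with $\limsup_N\P(w(X^N,\delta)\ge1)$ small; since $X^N(0)=0$, on the complementary event $\sup_{0\le t\le T}|X^N(t)|\le m$, which gives $\lim_{a\to\infty}\limsup_N\P(\sup_{0\le t\le T}|X^N(t)|\ge a)=0$. For the modulus condition I would use $w'(X^N,\delta)\le w(X^N,2\delta)$, so that the estimate just proved yields $\lim_{\delta\to0}\limsup_N\P(w'(X^N,\delta)\ge\ep)=0$. These are precisely the two hypotheses of Theorem 13.2 of \cite{billingsley1999convergence}, whence $\{X^N\}$ is tight in $\bD$ (and, since the ordinary modulus vanishes in the limit, every subsequential limit is in fact supported on $\bC$).

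The main obstacle is entirely contained in the first step: one must align the grid spacing and the window length so that the forward increments appearing in the union bound coincide, up to the harmless factor $2$, with those appearing in the hypothesis, and then check that the $1/\delta$ weight precisely offsets the number $m$ of grid intervals. Once that bookkeeping is done, the passage to $\bD$-tightness is a direct invocation of the classical criterion.
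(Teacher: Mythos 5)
Your proof is correct and follows essentially the same route as the paper's: control the ordinary modulus of continuity via a grid/union-bound argument in which the $1/\delta$ weight cancels the $\sim T/\delta$ terms, pass to the Skorohod modulus via $w'(x,\delta)\le w(x,2\delta)$, and invoke Theorem 13.2 of \cite{billingsley1999convergence}. The only difference is that you prove by hand the chaining step that the paper delegates to the Corollary of Theorem 7.4, and you verify the compact-containment condition directly from the modulus bound and $X^N(0)=0$ rather than via condition (i'') of the Corollary of Theorem 13.2; both are sound.
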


\begin{proof}
 The result follows from a combination of several results from \cite{billingsley1999convergence}. First the Corollary of Theorem 7.4 says that our assumption implies that for all $T>0$,
\begin{align*}\label{tightC}
\limsup_N\P\left(W^T_{X^N}(\delta)\ge\varepsilon\right)\to0,\ \text{ as }\delta\to0,
\end{align*}
where $W^T_x(\delta):=\sup_{0\le s,t\le T,\ |t-s|\le\delta}|x(t)-x(s)|$.
Combined with the inequality (12.7), that last fact implies that the second condition of Theorem 13.2 applies.
Moreover since our assumption implies that the jumps of $X^N$ tend to $0$ as $N\to\infty$, and $X^N(0)=0$,
condition (i'') from the Corollary of Theorem 13.2 applies, which replaces the first condition of the Theorem, hence 
that Theorem applies, and the sequence $X^N$ is tight in $\bD([0,T])$ for all $T>0$, hence in $\bD$.
\end{proof}

Let us recall well--known formulas for the exponential moment and moments of an integral with respect to  a compensated PRM. These follow, e.g., rather easily from Theorem VI.2.9 and Exercise VI.2.21 in \cite{ccinlar2011probability}.  
\begin{lemma}\label{lem:expmoment}
Let $Q$ be a PRM on some measurable space $(E,\mathcal{E})$, with mean measure $\nu$, and $\bar{Q}$ the associated compensated measure.
Let $f:E\mapsto\mathbb{C}$ be measurable and such that $e^f-1-f$ is $\nu$ integrable. Then
\[
\E\left[\exp\left(\int_E f(x)\bar{Q}(dx)\right)\right]
= \exp\left(\int_E \left[e^{f(x)}-1-f(x)\right]\nu(dx)\right)\,.
\]
If $\nu(f^2)<\infty$,
$$
\E\left[\left(\int_E f(x)\bar{Q}(dx)\right)^2\right] = \int_E f(x)^2\nu(dx) \,. 
$$
If $\nu(f^2+g^2+f^2g^2)<\infty$, 
\begin{align}\label{mixed2moment}
\E\left[\left(\int_E f(x)\bar{Q}(dx)\right)^2\left(\int_E g(x)\bar{Q}(dx)\right)^2\right] &=
\int_Ef^2(x)g^2(x)\nu(dx)+2\left(\int_E f(x)g(x)\nu(dx)\right)^2\non \\
&\quad+\int_E f^2(x)\nu(dx)\times\int_E g^2(x)\nu(dx)\,.
\end{align}
\end{lemma}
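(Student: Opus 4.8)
The plan is to treat the Laplace (Campbell) functional of $Q$ as the single engine and to read off everything else from it by differentiation. Recall that for a PRM $Q$ with mean measure $\nu$ and any measurable $g$ with $e^g-1$ being $\nu$--integrable, one has $\E[\exp(\int_E g\,Q(dx))]=\exp(\int_E(e^{g}-1)\nu(dx))$, which is exactly the content of Theorem VI.2.9 in \cite{ccinlar2011probability}. To obtain the first identity I would write $\int_E f\,\bar Q(dx)=\int_E f\,Q(dx)-\int_E f\,\nu(dx)$ and factor out the deterministic quantity $\exp(-\int_E f\,\nu(dx))$; combining it with $\exp(\int_E(e^f-1)\nu(dx))$ collapses the exponent to $\int_E(e^f-1-f)\nu(dx)$, and the standing hypothesis that $e^f-1-f$ is $\nu$--integrable is precisely what keeps this finite. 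For general complex $f$ this is the version of Campbell's formula recorded in Exercise VI.2.21 of \cite{ccinlar2011probability}.

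For the second moment I would avoid assuming any genuine exponential moment by passing to the characteristic functional. For real $h$ with $\nu(h^2)<\infty$, put $f=i\theta h$ and use $|e^{i\theta h}-1-i\theta h|\le\tfrac12\theta^2 h^2$ to see that $e^{i\theta h}-1-i\theta h$ is $\nu$--integrable, so that $\phi(\theta):=\E[\exp(i\theta\int_E h\,\bar Q(dx))]=\exp(M(\theta))$ with $M(\theta)=\int_E(e^{i\theta h}-1-i\theta h)\nu(dx)$. The bound $\nu(h^2)<\infty$ makes $M$ twice continuously differentiable near $0$ with $M'(0)=0$ and $M''(0)=-\nu(h^2)$, whence $\phi''(0)=M''(0)=-\nu(h^2)$; the $C^2$--smoothness of $\phi$ at the origin guarantees that the second moment exists and equals $-\phi''(0)=\nu(h^2)$.

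The mixed fourth moment \eqref{mixed2moment} is the main computation, and I would run the same idea in two parameters. Set $\phi(\theta_1,\theta_2)=\E[\exp(i\theta_1\int_E f\,\bar Q+i\theta_2\int_E g\,\bar Q)]=\exp(M(\theta_1,\theta_2))$ with $M(\theta_1,\theta_2)=\int_E(e^{i(\theta_1 f+\theta_2 g)}-1-i(\theta_1 f+\theta_2 g))\nu(dx)$, so that $\E[(\int_E f\,\bar Q)^2(\int_E g\,\bar Q)^2]=\partial_{\theta_1}^2\partial_{\theta_2}^2\phi(0,0)$ since $i^4=1$. Computing this derivative of $\phi=e^M$ by Fa\`a di Bruno is where the structure pays off: at the origin $M$ and all its first-order partials vanish, so any partition of the four differentiation slots $\{s_1,s_2,t_1,t_2\}$ that contains a block of size one contributes zero, and only partitions into blocks of size $\ge2$ survive. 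These are the single block of size four, contributing $\partial_{\theta_1}^2\partial_{\theta_2}^2 M(0,0)=\int_E f^2g^2\,\nu(dx)$, together with the three pairings into two blocks of size two: the pairing $\{s_1,s_2\},\{t_1,t_2\}$ gives $\partial_{\theta_1}^2 M(0,0)\,\partial_{\theta_2}^2 M(0,0)=(-\int_E f^2\nu)(-\int_E g^2\nu)$, and the two cross pairings each give $(\partial_{\theta_1}\partial_{\theta_2}M(0,0))^2=(-\int_E fg\,\nu)^2$. Adding these (all signs turn positive) reproduces exactly $\int_E f^2g^2\,\nu+\int_E f^2\nu\cdot\int_E g^2\nu+2(\int_E fg\,\nu)^2$.

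The genuine obstacle is not the combinatorics but the analytic justification that the mixed moment may be read off from $\partial_{\theta_1}^2\partial_{\theta_2}^2\phi(0,0)$: one must verify that $\phi$ is of class $C^{(2,2)}$ near the origin, that the target moment is finite, and that the two $\theta$--differentiations commute with the expectation and hence with the $\nu$--integral defining $M$. This is precisely what the hypothesis $\nu(f^2+g^2+f^2g^2)<\infty$ secures, through the elementary bounds $|e^{iu}-1-iu|\le u^2/2$ and $|\partial_u e^{iu}|\le1$ applied to $u=\theta_1 f+\theta_2 g$, which dominate the increments uniformly for $(\theta_1,\theta_2)$ in a neighborhood of $0$. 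Once the interchange is legitimate, the reduction above is immediate because of the vanishing of the first-order partials of $M$ at the origin; the same domination handles the second-moment case as a by-product.
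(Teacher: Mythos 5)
Your proposal is correct and follows essentially the route the paper intends: the paper gives no proof of this lemma beyond citing the Laplace functional of a PRM (Theorem VI.2.9 and Exercise VI.2.21 of \cite{ccinlar2011probability}), and your derivation --- passing to the characteristic functional $\phi=e^{M}$ with $M(\theta)=\int_E(e^{i\theta\cdot(f,g)}-1-i\theta\cdot(f,g))\nu(dx)$ and reading off the moments from the partitions of the differentiation slots into blocks of size $\ge 2$ --- is exactly how those cited results yield the three identities, with the combinatorics and signs coming out right. The only step you assert rather than prove is that $C^{(2,2)}$ smoothness of the joint characteristic function at the origin forces $\E[X^2Y^2]<\infty$ (the bivariate analogue of the classical univariate fact, obtained by applying Fatou to the product of second-order symmetric differences $\frac{2(1-\cos(h_1X))}{h_1^2}\cdot\frac{2(1-\cos(h_2Y))}{h_2^2}$); this is standard and you correctly identify both the issue and the dominations that resolve it.
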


We will need the following bounds on the increments of the infectivity function $\lambda(\cdot)$,  which are used  below  in the proofs of Propositions \ref{prop-wt-mfI-1-inc-moment}--\ref{prop-wt-mfI-2-inc-moment} and Lemmas \ref{lem-frakI1-diff-conv}--\ref{lem-frakI2-diff-conv},   
 \begin{lemma} \label{lem-barlambda-inc-bound}
For $t\ge s\ge 0$, with $\alpha>1/2$ as in Assumption \ref{AS-lambda} (ii),
\begin{align*}
  \big|\lambda(t) - \lambda(s) \big| \le (t-s)^\alpha + \lambda^*  \sum_{j=1}^{k-1} \bone_{s < \xi^j \le t}\,,
 \end{align*}
 and
 \begin{align*}
 |\bar\lambda(t) -\bar \lambda(s)| \le (t-s)^\alpha + \lambda^* \sum_{j=1}^{k-1} (F_j(t) - F_j(s))\,. 
 \end{align*}
 Moreover, 
 \begin{align*}
\E[|\breve{\lambda}(t)-\breve{\lambda}(s)|] &\le 2(t-s)^\alpha+2\lambda^* \sum_{j=1}^{k-1} (F_j(t)-F_j(s))\,,
\end{align*}
\begin{align*}
\E[|\breve{\lambda}(t)-\breve{\lambda}(s)|^2] \le  8 (t-s)^{2\alpha}  +  4(\lambda^*)^2 k \sum_{j=1}^{k-1} \big(F_j(t)-F_j(s) \big)   + 4(\lambda^*)^2 \bigg(\sum_{j=1}^{k-1} (F_j(t) - F_j(s)) \bigg)^2 \,. 
\end{align*}
 \end{lemma}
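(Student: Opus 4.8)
The plan is to prove the three increment bounds in sequence, exploiting the piecewise-continuous structure \eqref{eqn-lambda} together with the Lipschitz-type control $\varphi_T(t-s)\le C(t-s)^\alpha$ from Assumption \ref{AS-lambda}(ii). The key observation is that for $t\ge s$, the only way $\lambda(t)$ and $\lambda(s)$ can differ is either (a) both times fall in the same continuity piece $[\xi^{j-1},\xi^j)$, in which case the difference is controlled by $|\lambda^j(t)-\lambda^j(s)|\le\varphi_T(t-s)\le(t-s)^\alpha$, or (b) at least one breakpoint $\xi^j$ lies in $(s,t]$, in which case we cross a discontinuity and bound the jump crudely by $\lambda^*$ using Assumption \ref{AS-lambda-LLN}(i). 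This dichotomy yields the first, pathwise, estimate: writing $\lambda(t)-\lambda(s)$ as a telescoping sum across the breakpoints between $s$ and $t$, each continuous segment contributes at most $(t-s)^\alpha$ (by monotonicity of $\varphi_T$) and each crossed breakpoint contributes at most $\lambda^*$, giving $|\lambda(t)-\lambda(s)|\le(t-s)^\alpha+\lambda^*\sum_{j=1}^{k-1}\bone_{s<\xi^j\le t}$.

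First I would establish this pathwise bound carefully, since everything else follows by taking expectations. For the second bound on $|\bar\lambda(t)-\bar\lambda(s)|=|\E[\lambda(t)]-\E[\lambda(s)]|\le\E|\lambda(t)-\lambda(s)|$, I simply take expectations in the first bound and use $\E[\bone_{s<\xi^j\le t}]=F_j(t)-F_j(s)$, recalling that $F_j$ is the c.d.f.\ of $\xi^j$. This is immediate once the pathwise estimate is in hand. For the first moment bound on $\breve\lambda=\lambda-\bar\lambda$, I write $\breve\lambda(t)-\breve\lambda(s)=(\lambda(t)-\lambda(s))-(\bar\lambda(t)-\bar\lambda(s))$ and apply the triangle inequality together with the two bounds already obtained, absorbing the factor of $2$ on each term.

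For the second-moment bound, the plan is to square the pathwise estimate and take expectations. Using $(a+b)^2\le 2a^2+2b^2$, I split into the continuous contribution $(t-s)^{2\alpha}$ and the jump contribution $(\lambda^*)^2\big(\sum_{j=1}^{k-1}\bone_{s<\xi^j\le t}\big)^2$. For the diagonal terms of the squared indicator sum I use $\bone_{s<\xi^j\le t}^2=\bone_{s<\xi^j\le t}$, whose expectation is $F_j(t)-F_j(s)$; a combinatorial factor of $k$ appears from bounding the off-diagonal cross terms by grouping $\big(\sum_j a_j\big)^2\le k\sum_j a_j^2$ on the indicator part while keeping a separate $\big(\sum_j(F_j(t)-F_j(s))\big)^2$ term to match the stated right-hand side. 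I would then track the numerical constants ($8$, $4k$, $4$) to confirm they arise from these expansions; this bookkeeping is the only mildly delicate part, and I expect the main (though still routine) obstacle to be organizing the cross-term estimate so that the deterministic squared-sum term and the linear $k\sum_j$ term both appear cleanly rather than being conflated. No genuinely hard step is anticipated: the lemma is a structural consequence of the decomposition \eqref{eqn-lambda}, the uniform bound $\lambda^*$, and the Hölder-type increment control \eqref{eqn-lambda-inc}.
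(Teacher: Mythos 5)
Your proposal follows essentially the same route as the paper: the pathwise dichotomy (same continuity piece versus at least one $\xi^j$ crossed), then expectations for the remaining three bounds, with the constants $8$, $4k$, $4$ arising exactly as you describe from $(a+b)^2\le 2a^2+2b^2$ and Cauchy--Schwarz on the indicator sum. One caveat on the pathwise estimate: the "telescoping across the breakpoints" implementation does not literally give the stated bound, since with $m\ge 2$ continuous segments the H\"older contributions add up to roughly $m^{1-\alpha}(t-s)^\alpha$ rather than $(t-s)^\alpha$. The clean argument --- which is what the paper does, and which your case (b) already contains in spirit --- is: if no $\xi^j$ lies in $(s,t]$ then $\lambda(t)-\lambda(s)=\lambda^j(t)-\lambda^j(s)$ is controlled by $\varphi_T(t-s)\le (t-s)^\alpha$; if at least one $\xi^j\in(s,t]$, bound the \emph{entire} difference by $\lambda^*$ (using $0\le\lambda\le\lambda^*$), which is $\le \lambda^*\sum_{j=1}^{k-1}\bone_{s<\xi^j\le t}$ since the sum is then at least $1$. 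With that one substitution your proof is complete and matches the paper's.
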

 \begin{proof}
 We have 
 \begin{align*}
 \lambda(t) - \lambda(s) =\sum_{j=1}^k \big( \lambda^j(t) - \lambda^j(s) \big) \bone_{\xi^{j-1} \le s,\, t < \xi^j}
  +  \big(  \lambda(t) - \lambda(s)\big) \sum_{j=1}^{k-1} \bone_{s < \xi^j \le t}\,. 
 \end{align*}
 Thus the first statement follows from Assumption \ref{AS-lambda} (ii). The other statements follow readily from the first one.  
  \end{proof}
 
 We shall also use several times the following  straightforward inequality: for any $1\le j\le k-1$, $0\le r\le t$,
 \begin{align}\label{estim-int-F}
 \int_0^r(F_j((t-s)-F_j(r-s))ds&=\int_{t-r}^tF_j(s)ds-\int_0^rF_j(s)ds\non\\
 &\le\int_r^tF_j(s)ds\le t-r\,.
 \end{align}

\subsection{Representation of $(\hat{S}^N, \hat{\mathfrak{I}}^N)$ and proof roadmap} \label{sec-rep-roadmap}

By the expressions of $\mathfrak{I}^N$ in \eqref{eqn-mf-I} and $\bar{\mathfrak{I}}$ in \eqref{eqn-barfrakI}, we obtain 
\begin{align} \label{eqn-hatfrakI-rep}
\hat{\mathfrak{I}}^N(t) = \hat{I}^N(0) \bar{\lambda}^{0,I}(t)+  \hat{E}^N(0) \bar{\lambda}^{0}(t) +  \hat{\mathfrak{I}}^N_{0,1}(t) +  \hat{\mathfrak{I}}^N_{0,2}(t)  + \hat{\mathfrak{I}}^N_1(t) + \hat{\mathfrak{I}}^N_2(t)   
 + \int_0^t \bar{\lambda}(t-s) \hat{\Upsilon}^N(s) ds, 
\end{align}
where
\begin{align*} 
 \hat{\mathfrak{I}}^N_{0,1}(t)  := \frac{1}{\sqrt{N}}   \sum_{k=1}^{I^N(0)} \big( \lambda^{0,I}_k(t)  - \bar{\lambda}^{0,I}(t) \big),
\end{align*}
\begin{align*}
 \hat{\mathfrak{I}}^N_{0,2}(t)  := \frac{1}{\sqrt{N}}   \sum_{j=1}^{E^N(0)} \big( \lambda^0_j(t)  - \bar{\lambda}^0(t) \big),
\end{align*}
\begin{align} \label{eqn-hatfrakI1-def}
 \hat{\mathfrak{I}}^N_1(t)  := \frac{1}{\sqrt{N}}   \sum_{i=1}^{A^N(t)}\big( \lambda_i (t-\tau^N_i)   -
\bar\lambda (t-\tau^N_i)  \big), 
\end{align}
and 
\begin{align} \label{eqn-hatfrakI2-def}
 \hat{\mathfrak{I}}^N_2(t)  := \frac{1}{\sqrt{N}} \left(  \sum_{i=1}^{A^N(t)}\bar{\lambda} (t-\tau^N_i)   -  \int_0^t \bar{\lambda}(t-s) \Upsilon^N(s) ds  \right). 
\end{align}

For the process $A^N(t)$, we have the decomposition
\begin{align} \label{eqn-An-decomp}
A^N(t)= M_A^N(t) + \int_0^t\Upsilon^N(s)ds\,,
\end{align}
where
$$
M_A^N(t) = \int_0^t\int_0^\infty{\bf1}_{u\le \Upsilon^N(s^-)}\overline{Q}(ds,du)\,, 
$$
with $\overline{Q}(ds,du) := Q(ds,du)- dsdu$ being the compensated PRM.
The process $\{\hat{M}^N_A(t): t\ge 0\}$ is a square-integrable martingale with respect to the filtration 
$\{\sF^N_t: t\ge 0\}$ defined by 
$$
\sF^N_t :=\sigma\Big\{I^N(0), E^N(0), \lambda^0_j(\cdot)_{j\ge1}, \lambda^{0,I}_k(\cdot)_{k\ge1}, \lambda_i(\cdot)_{i\ge1}, Q\big|_{[0,t]\times\R_+} \Big\}\,.
$$
It has the quadratic variation  (see e.g. \cite[Chapter VI]{ccinlar2011probability})
\begin{align*}
\langle\hat{M}^N_A \rangle(t) = N^{-1} \int_0^t \Upsilon^N(s)ds, \quad t \ge 0. 
\end{align*}

Under Assumption \ref{AS-lambda}(i), we have
\begin{equation} \label{eqn-int-Phi-bound}
0 \le N^{-1} \int_s^{t} \Upsilon^N(u) du \le \lambda^*(t-s),  \quad \text{w.p.\,1} \qforq 0 \le s \le t.
\end{equation}
It is shown in Section 4.1 of \cite{FPP2020b} that 
\begin{align} \label{eqn-barPhi-conv}
\int_0^{\cdot} \bar{\Upsilon}^N(s) ds = \int_0^{\cdot} \bar{S}^N(s)\bar{\mathfrak{I}}^N(s)  ds  \RA  \int_0^{\cdot} \bar{S}(s)\bar{\mathfrak{I}}(s)  ds \qinq \bD \qasq N \to \infty. 
\end{align}
and
\begin{align*}
\bar{A}^N \RA \bar{A} =  \int_0^{\cdot} \bar{S}(s)\bar{\mathfrak{I}}(s)  ds \qinq \bD \qasq N \to \infty.
\end{align*}

By \eqref{eqn-An-decomp}, we have
\begin{align*}
\hat{A}^N(t) =\sqrt{N} (\bar{A}^N(t) - \bar{A}(t)) = \hat{M}_A^N(t) + \int_0^t\hat{\Upsilon}^N(s)ds,
\end{align*}
where 
$$
\hat{M}_A^N(t) = \frac{1}{\sqrt{N}}\int_0^t\int_0^\infty{\bf1}_{u\le \Upsilon^N(s^-)}\overline{Q}(ds,du), 
$$
and
\begin{align} \label{eqn-hatPhi-N-rep}
\hat{\Upsilon}^N(t) & = \sqrt{N} (\bar{S}^N(t)\bar{\mathfrak{I}}^N(t)  -\bar{S}(t)\bar{\mathfrak{I}}(t)  ) =  \hat{S}^N(t)  \bar{\mathfrak{I}}^N(t) + \bar{S}(t)  \hat{\mathfrak{I}}^N(t). 
\end{align}

It then follows that
\begin{align} \label{eqn-hatSn-rep}
\hat{S}^N(t) &= \hat{S}^N(0) - \hat{A}^N(t) = - \hat{E}^N(0) - \hat{I}^N(0) - \hat{M}_A^N(t) - \int_0^t\hat{\Upsilon}^N(s)ds. 
\end{align}

{\bf Roadmap of the proofs ahead.}
In what follows, we will first prove the joint convergence of $\big( \hat{\mathfrak{I}}^N_{0,1},  \hat{\mathfrak{I}}^N_{0,2} \big) $ in Lemma \ref{lem:tightJ0}, which is analogous to the proof of CLT for i.i.d. random processes, but with a variation of a random summation term $I^N(0)$ or $E^N(0)$. 
We then prove the joint convergences of $( \hat{\mathfrak{I}}^N_1, \hat{\mathfrak{I}}^N_2)$. 
We observe that both processes  $ \hat{\mathfrak{I}}^N_1$ and  $ \hat{\mathfrak{I}}^N_2$ can be written as stochastic integrals with respect to the associated PRMs: $ \hat{\mathfrak{I}}^N_1$ uses a PRM on $\RR_+^2 \times \bD$, whose mean measure is the product of the Lebesgue mesure on $\R_+^2$ with the law of $\lambda$,  while $ \hat{\mathfrak{I}}^N_2$ uses a standard PRM on $\RR_+^2$.  We employ the moment criterion 
\eqref{moment3points} to establish the tightness of these two processes which requires calculations of moments of the process increments. 
In order to apply the formulas in Lemma \ref{lem:expmoment}, we introduce two associated processes $ \wt{\mathfrak{I}}^N_1$ and  $ \wt{\mathfrak{I}}^N_2$ by replacing the stochastic intensity by a deterministic function, and provide  the required moment estimates in Section \ref{sec-moment}. 
The next crucial step is to prove the finiteness of the second moment of  the supremum of  the processes $\hat{S}^N$ and $\hat{\mathfrak{I}}^N$, which results in the same property of $\hat\Upsilon^N$ (Lemma \ref{lem-hatPhi-2-boundedness}).
  For that, we employ the new criterion in Theorem \ref{th:momentsup} to first establish the bounds for the supremum of the processes  $ \wt{\mathfrak{I}}^N_1$ and  $ \wt{\mathfrak{I}}^N_2$  using  the moment estimates of their increments in Propositions~\ref{prop-wt-mfI-1-inc-moment} and \ref{prop-wt-mfI-2-inc-moment}, and then establish the bounds for the supremum of the differences $ \hat{\mathfrak{I}}^N_1-\wt{\mathfrak{I}}^N_1$ and  $ \hat{\mathfrak{I}}^N_2- \wt{\mathfrak{I}}^N_2$. 
  This estimate of the supremum of processes in Lemma \ref{lem-hatPhi-2-boundedness} will be used in the proof of the asymptotic equivalence of $ (\hat{\mathfrak{I}}^N_1, \hat{\mathfrak{I}}^N_2)$ and $(\wt{\mathfrak{I}}^N_1, \wt{\mathfrak{I}}^N_2)$ in Lemmas \ref{lem-frakI1-diff-conv} and  \ref{lem-frakI2-diff-conv}, as well as the convergence of $\big(\hat{E}^N_1,\hat{I}^N_1, \hat{R}^N_1\big)$ in Lemma \ref{lem-3.12}.

\subsection{Convergence of  $\big( \hat{\mathfrak{I}}^N_{0,1},  \hat{\mathfrak{I}}^N_{0,2} \big)$} \label{sec-mfI-0-proof}

\begin{lemma}\label{lem:tightJ0}
Under Assumptions \ref{AS-lambda}(i) and \ref{AS-FCLT}, 
\begin{equation} \label{eqn-hatfrakI-0-conv}
\big( \hat{\mathfrak{I}}^N_{0,1},  \hat{\mathfrak{I}}^N_{0,2} \big) \RA  \big( \hat{\mathfrak{I}}_{0,1},  \hat{\mathfrak{I}}_{0,2}\big)  \qinq \bD^2 \qasq N \to \infty\,,
\end{equation}
where 
$(\hat{\mathfrak{I}}_{0,1},\hat{\mathfrak{I}}_{0,2})$ is a centered 2-dimensional Gaussian process  with the following covariance functions: for $t, t'\ge 0$, $\Cov( \hat{\mathfrak{I}}_{0,1}(t),  \hat{\mathfrak{I}}_{0,2}(t'))=0$, and 
 \begin{align*}
\Cov( \hat{\mathfrak{I}}_{0,1}(t),  \hat{\mathfrak{I}}_{0,1}(t'))
&= \bar{I}(0) \Cov(\lambda^{0,I}(t), \lambda^{0,I}(t')), \\
\Cov( \hat{\mathfrak{I}}_{0,2}(t),  \hat{\mathfrak{I}}_{0,2}(t'))
&= \bar{E}(0) \Cov(\lambda^0(t), \lambda^0(t')). 
\end{align*}
 
\end{lemma}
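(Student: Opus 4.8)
The statement is a functional central limit theorem for a normalized sum of i.i.d.\ $\bD$-valued random functions, complicated only by the fact that the number of summands, $I^N(0)$ (resp.\ $E^N(0)$), is random but asymptotically deterministic. My plan is the standard two-step program: establish convergence of the finite-dimensional distributions to the stated Gaussian law, and separately prove tightness of each coordinate in $(\bD,J_1)$; together these give weak convergence in $\bD^2$. Throughout I use that the families $\{\lambda^{0,I}_k\}$ and $\{\lambda^0_j\}$ are i.i.d., mutually independent, and independent of the initial counts $(I^N(0),E^N(0))$, and that the conditions (a)--(b) of Assumption \ref{AS-lambda}(i) hold for $\breve\lambda^{0,I}$ as well as for $\breve\lambda^0$.

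For the finite-dimensional distributions, fix $t_1,\dots,t_m$ and set $\xi_k:=\big(\lambda^{0,I}_k(t_1)-\bar\lambda^{0,I}(t_1),\dots,\lambda^{0,I}_k(t_m)-\bar\lambda^{0,I}(t_m)\big)$, an i.i.d.\ sequence of centered $\R^m$-vectors with covariance $\Sigma^{(1)}_{ij}=\Cov(\lambda^{0,I}(t_i),\lambda^{0,I}(t_j))$, and analogously $\zeta_j$ from $\lambda^0_j$ with covariance $\Sigma^{(2)}$. Conditioning on $(I^N(0),E^N(0))$ and using the stated independence, the joint characteristic function factorizes:
\[
\E\Big[\exp\big(i\langle u,N^{-1/2}\sum_{k=1}^{I^N(0)}\xi_k\rangle+i\langle w,N^{-1/2}\sum_{j=1}^{E^N(0)}\zeta_j\rangle\big)\Big]=\E\big[\varphi_\xi(u/\sqrt N)^{I^N(0)}\,\varphi_\zeta(w/\sqrt N)^{E^N(0)}\big],
\]
where $\varphi_\xi,\varphi_\zeta$ are the characteristic functions of $\xi_1,\zeta_1$. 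A second-order Taylor expansion $\log\varphi_\xi(u/\sqrt N)=-\tfrac{1}{2N}u^\top\Sigma^{(1)}u+o(1/N)$, together with $I^N(0)\le N$ and $I^N(0)/N\to\bar I(0)$, $E^N(0)/N\to\bar E(0)$ (Assumption \ref{AS-FCLT}), shows the integrand is bounded by $1$ and converges in probability to $\exp(-\tfrac12\bar I(0)\,u^\top\Sigma^{(1)}u-\tfrac12\bar E(0)\,w^\top\Sigma^{(2)}w)$; bounded convergence then yields the Gaussian limit with the claimed covariances and vanishing cross-covariance, the product form reflecting the independence of the two families.

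For tightness I would verify the three-point moment criterion \eqref{3pts-moments} for each process separately. Conditioning on $I^N(0)=n\le N$ and writing $a_k=\breve\lambda^{0,I}_k(s)-\breve\lambda^{0,I}_k(r)$, $b_k=\breve\lambda^{0,I}_k(t)-\breve\lambda^{0,I}_k(s)$ for $r\le s\le t$, the centered i.i.d.\ structure gives the exact identity
\[
\E\Big[\big(\sum_k a_k\big)^2\big(\sum_l b_l\big)^2\Big]=n\,\E[a^2b^2]+n(n-1)\E[a^2]\E[b^2]+2n(n-1)(\E[ab])^2.
\]
Dividing by $N^2$ and using $n(n-1)/N^2\le1$, $n/N^2\le1$, I invoke Assumption \ref{AS-lambda}(i) applied to $\lambda^{0,I}$: $\E[a^2],\E[b^2]\le(\phi(t)-\phi(r))^\alpha$, hence $\E[a^2]\E[b^2]\le(\phi(t)-\phi(r))^{2\alpha}$ and $(\E[ab])^2\le(\phi(t)-\phi(r))^{2\alpha}$ by Cauchy--Schwarz, while $\E[a^2b^2]\le(\psi(t)-\psi(r))^\beta$. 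This yields $\E[\,|\hat{\mathfrak{I}}^N_{0,1}(s)-\hat{\mathfrak{I}}^N_{0,1}(r)|^2|\hat{\mathfrak{I}}^N_{0,1}(t)-\hat{\mathfrak{I}}^N_{0,1}(s)|^2\,]\le C\big[(\phi(t)-\phi(r))^{2\alpha}+(\psi(t)-\psi(r))^\beta\big]$, a bound uniform in $n\le N$ that therefore survives taking expectation over the random $I^N(0)$. Since $2\alpha>1$ and $\beta>1$, setting $G=(\phi-\phi(0))+(\psi-\psi(0))$ (continuous, nondecreasing, $G(0)=0$, bounded on $[0,T]$ by some $B$) and $\gamma:=\min(2\alpha,\beta)>1$, both terms are dominated by $(B^{2\alpha-\gamma}+B^{\beta-\gamma})(G(t)-G(r))^{\gamma}$, which is exactly \eqref{3pts-moments} with $2\alpha'=\gamma>1$. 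Combined with the bounded variance of $\hat{\mathfrak{I}}^N_{0,1}(0)$ (of order $\bar I(0)\Var(\lambda^{0,I}(0))$), this gives tightness in $\bD$; the identical computation handles $\hat{\mathfrak{I}}^N_{0,2}$.

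The main obstacle is the bookkeeping in the tightness step rather than any conceptual difficulty: one must confirm that the random index $I^N(0)$ does not spoil the uniform-in-$N$ moment bound (handled by conditioning and the crude estimates $n(n-1)\le N^2$, $n\le N^2$), and that the two increment terms with distinct exponents $2\alpha$ and $\beta$ can be genuinely merged into the single-function form required by \eqref{3pts-moments} (handled via the boundedness of $G$ on $[0,T]$ and $\gamma=\min(2\alpha,\beta)>1$). Neither step is deep, which is consistent with the paper's remark that this convergence is essentially that of an i.i.d.\ functional CLT with a random summation count.
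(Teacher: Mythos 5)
Your proof is correct, but it takes a genuinely different route from the paper's. The paper introduces auxiliary processes $\wt{\mathfrak{I}}^N_{0,1},\wt{\mathfrak{I}}^N_{0,2}$ with the deterministic summation bounds $N\bar I(0)$, $N\bar E(0)$, invokes Hahn's functional CLT for i.i.d.\ random elements of $\bD$ (whose hypotheses are exactly conditions (a)--(b) of Assumption \ref{AS-lambda}(i)) to get their joint convergence, and then shows the differences $\wt{\mathfrak{I}}^N_{0,i}-\hat{\mathfrak{I}}^N_{0,i}$ vanish in probability via a pointwise variance estimate plus a three-point moment bound on the sum over the random index range between $N(\bar E(0)\wedge\bar E^N(0))$ and $N(\bar E(0)\vee\bar E^N(0))$. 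You instead prove finite-dimensional convergence directly by conditioning on $(I^N(0),E^N(0))$, expanding the characteristic function, and using $\bar I^N(0)\to\bar I(0)$, $\bar E^N(0)\to\bar E(0)$ with bounded convergence; and you prove tightness by applying the mixed fourth-moment identity to the full process $\hat{\mathfrak{I}}^N_{0,1}$ itself, with the crude but uniform bounds $n\le N$, $n(n-1)\le N^2$. The algebra in your tightness step is essentially the same as the paper's display \eqref{eqn-mfI-02-diff-moment2}, just applied to the whole sum rather than the boundary piece. What your route buys is self-containedness: you never need the external CLT in $\bD$, and the merging of the two increment bounds into a single modulus $G=(\phi-\phi(0))+(\psi-\psi(0))$ with exponent $\gamma=\min(2\alpha,\beta)>1$ is handled explicitly (the paper leaves this implicit inside the cited theorems). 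What the paper's route buys is reusability: the difference estimate \eqref{eqn-mfI-02-diff-moment2} is recycled verbatim in the proof of Lemma \ref{lem-hatPhi-2-boundedness} to get the supremum moment bound via Theorem \ref{th:momentsup}, whereas your argument would have to be supplemented there. Both arguments rest on the same two ingredients (independence of the initial counts from the infectivity functions, and conditions (a)--(b) holding for $\lambda^{0,I}$ as well as $\lambda^0$), so no hypothesis is missing.
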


\begin{proof}
Define 
\begin{align} \label{eqn-hatfrakI01-def}
 \wt{\mathfrak{I}}^N_{0,1}(t)  := \frac{1}{\sqrt{N}}   \sum_{k=1}^{N \bar{I}(0)} \big( \lambda^{0,I}_k(t)  - \bar{\lambda}^{0,I}(t) \big),
\end{align}
\begin{align} \label{eqn-hatfrakI02-def}
 \wt{\mathfrak{I}}^N_{0,2}(t)  := \frac{1}{\sqrt{N}}   \sum_{j=1}^{N \bar{E}(0)} \big( \lambda^0_j(t)  - \bar{\lambda}^0(t) \big). 
\end{align}
By the CLT for the random elements in $\bD$ (see Theorem 2 in \cite{hahn1978central},  whose conditions (i) and (ii) are satisfied thanks to Assumption \ref{AS-lambda} (i) (a) and (b), respectively) and by the independence of the sequences $\{\lambda^0_j\}_{j\ge 1}$ and $\{\lambda^{0,I}_k\}_{k\ge 1}$,
we obtain 
\begin{equation} \label{eqn-tildefrakI-0-conv}
\big( \wt{\mathfrak{I}}^N_{0,1},  \wt{\mathfrak{I}}^N_{0,2} \big) \RA  \big( \hat{\mathfrak{I}}_{0,1},  \hat{\mathfrak{I}}_{0,2}\big)  \qinq \bD^2 \qasq N \to \infty. 
\end{equation}
It then suffices to show that
\begin{equation} \label{eqn-hatfrakI-0-diff-conv}
\big( \wt{\mathfrak{I}}^N_{0,1}-\hat{\mathfrak{I}}^N_{0,1},  \wt{\mathfrak{I}}^N_{0,2}-\hat{\mathfrak{I}}^N_{0,2} \big) \to  0  \qinq \bD^2 \qasq N \to \infty, 
\end{equation}
in probability. 
We focus on $ \wt{\mathfrak{I}}^N_{0,2}-\hat{\mathfrak{I}}^N_{0,2} \RA  0$. 
It is clear from the definition in \eqref{eqn-hatfrakI02-def} and the i.i.d. property of $\lambda_j^0(\cdot)$ that for each $t\ge 0$,  $\E\big[  \wt{\mathfrak{I}}^N_{0,2}(t)-\hat{\mathfrak{I}}^N_{0,2}(t) \big]=0$, and 
\begin{align*}
\E\big[ \big( \wt{\mathfrak{I}}^N_{0,2}(t)-\hat{\mathfrak{I}}^N_{0,2}(t) \big)^2\big]=v_0(t)  \E\big[ |\bar{E}(0) - \bar{E}^N(0) |\big]\to 0 \qasq N \to\infty\,,
\end{align*}
where 
$v_0(t) <\infty$,
and  the convergence follows from Assumption \ref{AS-FCLT} and the dominated convergence theorem. It then remains to show that $ \{\wt{\mathfrak{I}}^N_{0,2}-\hat{\mathfrak{I}}^N_{0,2}: N \in \NN\}$ is tight in $\bD$. We have
\begin{align} \label{eqn-mfI-02-diff}
\wt{\mathfrak{I}}^N_{0,2}(t)-\hat{\mathfrak{I}}^N_{0,2}(t)
&= \text{sign} (\bar{E}(0) - \bar{E}^N(0)) \frac{1}{\sqrt{N}}   \sum_{j=N( \bar{E}(0)\wedge \bar{E}^N(0))+1}^{N( \bar{E}(0)\vee \bar{E}^N(0))} \big( \lambda^{0}_j(t)  - \bar{\lambda}^{0}(t) \big).
\end{align}
We use the moment criterion \eqref{moment3points}, and consider the moment: for $t'\le t \le t''$, 
\begin{equation} \label{eqn-mfI-02-diff-moment1}
\E\left[ \big| \big( \wt{\mathfrak{I}}^N_{0,2}(t)-\hat{\mathfrak{I}}^N_{0,2}(t) \big) -  \big( \wt{\mathfrak{I}}^N_{0,2}(t')-\hat{\mathfrak{I}}^N_{0,2}(t') \big) \big|^2 \times  \big| \big( \wt{\mathfrak{I}}^N_{0,2}(t)-\hat{\mathfrak{I}}^N_{0,2}(t) \big) -  \big( \wt{\mathfrak{I}}^N_{0,2}(t'')-\hat{\mathfrak{I}}^N_{0,2}(t'') \big) \big|^2   \right]\,. 
\end{equation}
Recall $\breve{\lambda}^0_j(t) = \lambda^0_j(t) - \bar\lambda^0(t)$, and we drop the subscript $j$ for the generic variable  $\breve{\lambda}^0(t)$. 
Then by the i.i.d. and mean zero properties of $\breve{\lambda}^0_j(t)$, and by the independence between $\bar{E}^N(0)$ and $\breve{\lambda}^0_j(t)$, 
we obtain that the moment above is equal to
\begin{align}\label{eqn-mfI-02-diff-moment2} 
 & \frac{1}{N^2}\E \left[  \left( \sum_{j=N( \bar{E}(0)\wedge \bar{E}^N(0))+1}^{N( \bar{E}(0)\vee \bar{E}^N(0))} \big( \breve\lambda^{0}_j(t)  - \breve{\lambda}^{0}_j(t') \big)  \right)^2 \left(\sum_{j=N( \bar{E}(0)\wedge \bar{E}^N(0))+1}^{N( \bar{E}(0)\vee \bar{E}^N(0))} \big( \breve\lambda^{0}_j(t)  - \breve{\lambda}^{0}_j(t'') \big) \right)^2 \right]  \non \\
 & =  \frac{1}{N^2} \bigg( \E\big[N|\bar{E}^N(0)- \bar{E}(0) | \big] \E \big[\big( \breve\lambda^{0}(t)  - \breve{\lambda}^{0}(t') \big)^2  \big( \breve\lambda^{0}(t)  - \breve{\lambda}^{0}(t'') \big)^2 \big] \non \\
 & \quad + \E\big[N|\bar{E}^N(0) -\bar{E}(0) |  (N|\bar{E}^N(0)- \bar{E}(0) |-1)\big] \E \big[\big( \breve\lambda^{0}(t)  - \breve{\lambda}^{0}(t') \big)^2 \big] \E\big[\big( \breve\lambda^{0}(t)  - \breve{\lambda}^{0}(t'') \big)^2 \big] \non \\
 & \quad + 2 \E\big[N|\bar{E}^N(0)- \bar{E}(0) |  (N|\bar{E}^N(0)- \bar{E}(0) |-1)\big]  \Big( \E \big[\big( \breve\lambda^{0}(t)  - \breve{\lambda}^{0}(t') \big) \big( \breve\lambda^{0}(t)  - \breve{\lambda}^{0}(t'') \big) \big] \Big)^2 \bigg) \non \\
 & \le \frac{1}{N^2} \bigg( \E\big[N|\bar{E}^N(0)- \bar{E}(0) | \big] \E \big[\big( \breve\lambda^{0}(t)  - \breve{\lambda}^{0}(t') \big)^2  \big( \breve\lambda^{0}(t)  - \breve{\lambda}^{0}(t'') \big)^2 \big] \non \\
 & \quad + 3\E\big[N|\bar{E}^N(0) - \bar{E}(0) |  (N|\bar{E}^N(0) - \bar{E}(0) |-1)\big] \E \big[\big( \breve\lambda^{0}(t)  - \breve{\lambda}^{0}(t') \big)^2 \big] \E\big[\big( \breve\lambda^{0}(t)  - \breve{\lambda}^{0}(t'') \big)^2 \big] \bigg) \non \\
 & \le \frac{1}{N}\E\big[|\bar{E}^N(0)- \bar{E}(0) | \big]  (\psi(t'')-\psi(t'))^\beta+  3 \E\big[|\bar{E}^N(0)- \bar{E}(0) |^2\big] (\phi(t'')-\phi(t'))^{2\alpha},
\end{align}
where we have used Cauchy-Schwarz inequality in the first inequality, and conditions (a) and (b) in Assumption \ref{AS-lambda} (i)  in the second inequality. By Assumption \ref{AS-FCLT},  $\E\big[|\bar{E}^N(0)- \bar{E}(0) | \big] \to 0$ and $\E\big[|\bar{E}^N(0)- \bar{E}(0) |^2\big]\to 0$ as $N\to\infty$.  Thus we conclude that the tightness of $\{\wt{\mathfrak{I}}^N_{0,1}-\hat{\mathfrak{I}}^N_{0,1}: N \in \NN\}$
follows from  Theorem \ref{13.5}. This completes the proof. 
\end{proof}

\subsection{Moment estimates associated with the processes $ \hat{\mathfrak{I}}^N_1$, $ \hat{\mathfrak{I}}^N_2$ and $\hat\Upsilon^N$} \label{sec-moment}

We first consider $ \hat{\mathfrak{I}}^N_1$. 
Let us introduce the canonical probability space associated to the random function $\lambda$. 
If we equip $\bD$ with its Borel $\sigma$--field, and the probability measure which is the law of 
the random function $\lambda$, then  the identity mapping $\lambda\mapsto\lambda$  from $\bD$ into itself defines our random function $\lambda$. So that below $\lambda$ will be both the same object as defined above, and the  
generic element in $\bD$. As such, it will also be a dummy variable in the next integral.
We introduce a PRM  $\breve{Q}$ on $\R_+\times \bD\times \R_+$, 
which to the point $\tau^N_i$ associates the copy $\lambda_i$ of the random function $\lambda$, so that the mean measure of the PRM is 
\[ ds\times\text{ Law of }\lambda\times du\,.\]
With the notation $\breve{\lambda}:=\lambda-\bar{\lambda}$, $\hat{\mathfrak{I}}^N_1$ can be written
\begin{align} \label{eqn-hatfrakI1-PRM-1}
 \hat{\mathfrak{I}}^N_1(t) =N^{-1/2}\int_0^t\int_\bD\int_0^\infty \breve{\lambda}(t-s){\bf1}_{u\le \Upsilon^N(s^-)} \breve{Q}(ds,d\lambda,du)\,.
 \end{align}
We note that if we replace in the above $\breve{Q}$ by its mean measure, then the resulting integral vanishes.
Consequently we also have
\begin{align}\label{eqn-hatfrakI1-PRM-2}
 \hat{\mathfrak{I}}^N_1(t) =N^{-1/2}\int_0^t\int_\bD\int_0^\infty \breve{\lambda}(t-s){\bf1}_{u\le \Upsilon^N(s^-)} \wt{Q}(ds,d\lambda,du)\, ,
 \end{align}
where $\wt{Q}$ is the compensated PRM of $\breve{Q}$. Hence $\E[ \hat{\mathfrak{I}}^N_1(t)]=0$
and
\begin{equation}\label{moment2}
\E\big[(\hat{\mathfrak{I}}^N_1(t))^2\big]
= \E \left[ \int_0^t \breve{\lambda}^2(t-s) \bar{\Upsilon}^N(s)ds \right]\,. 
\end{equation}

 Let 
\begin{align} \label{eqn-wtmfI-1-def}
 \wt{\mathfrak{I}}^N_1(t):=N^{-1/2}\int_0^t\int_\bD\int_0^\infty \breve{\lambda}(t-s) {\bf1}_{u\le N\bar{\Upsilon}(s)} \wt{Q}(ds,d\lambda,du) .
 \end{align}
In the next proposition, we prove the moment bound for the increments of  $ \wt{\mathfrak{I}}^N_1$,
 which will be used in the proof of its convergence. 
Note that by \eqref{eqn-int-Phi-bound} and \eqref{eqn-barPhi-conv}, we have  $\bar\Upsilon(t) \le \lambda^*$ for any $t\ge 0$.

\begin{prop} \label{prop-wt-mfI-1-inc-moment}
There exist $C>0$ and $\beta>0$
such that for any $r<t<v$,  
\begin{equation} \label{eqn-wt-mfI-1-inc-moment}
\E \left[\big| \wt{\mathfrak{I}}^N_1(t) -  \wt{\mathfrak{I}}^N_1(r) \big|^2 \big| \wt{\mathfrak{I}}^N_1(t) -  \wt{\mathfrak{I}}^N_1(v) \big|^2 \right]\le C(v-r)^{1+\beta}\,. 
\end{equation}
\end{prop}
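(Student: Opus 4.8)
The plan is to write both increments as stochastic integrals against the compensated PRM $\wt{Q}$ and apply the mixed second moment formula \eqref{mixed2moment}. Set $\Delta_1 := \wt{\mathfrak{I}}^N_1(t)-\wt{\mathfrak{I}}^N_1(r)$ and $\Delta_2 := \wt{\mathfrak{I}}^N_1(t)-\wt{\mathfrak{I}}^N_1(v)$. Combining the time integrals in \eqref{eqn-wtmfI-1-def}, these take the form $\Delta_1 = N^{-1/2}\int_0^t\!\int_\bD\!\int_0^\infty h_f(s,\lambda)\bone_{u\le N\bar{\Upsilon}(s)}\,\wt{Q}(ds,d\lambda,du)$ and similarly for $\Delta_2$ with $h_g$, where
\[ h_f(s,\lambda) = (\breve{\lambda}(t-s)-\breve{\lambda}(r-s))\bone_{s\le r}+\breve{\lambda}(t-s)\bone_{r<s\le t}, \]
\[ h_g(s,\lambda) = (\breve{\lambda}(t-s)-\breve{\lambda}(v-s))\bone_{s\le t}-\breve{\lambda}(v-s)\bone_{t<s\le v}. \]
Applying \eqref{mixed2moment} and integrating the variable $u$ over $[0,N\bar{\Upsilon}(s)]$, the $N^{-1/2}$ prefactors cancel the extra mass $N\bar{\Upsilon}(s)$ in the two ``independent'' terms but leave an $N^{-1}$ in the diagonal one, giving
\[ \E[\Delta_1^2\Delta_2^2] = \E[\Delta_1^2]\,\E[\Delta_2^2]+2\big(\E[\Delta_1\Delta_2]\big)^2+N^{-1}\int_0^t\E\big[h_f(s,\lambda)^2h_g(s,\lambda)^2\big]\bar{\Upsilon}(s)\,ds. \]

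For the first two terms I would bound the single increments. By the isometry in Lemma \ref{lem:expmoment}, $\E[\Delta_1^2]=\int_0^t\E[h_f^2]\bar{\Upsilon}\,ds$; using $\bar{\Upsilon}\le\lambda^*$, the second-moment bound of Lemma \ref{lem-barlambda-inc-bound}, the estimate \eqref{estim-int-F} for the linear $F_j$-contribution, and the refinement $\int_0^r(F_j(t-s)-F_j(r-s))^2\,ds\le C(t-r)^{1+\rho}$ (obtained by combining \eqref{hypF} with \eqref{estim-int-F}) for the quadratic one, every contribution is at least linear, so $\E[\Delta_1^2]\le C(t-r)$ and likewise $\E[\Delta_2^2]\le C(v-t)$. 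Hence $\E[\Delta_1^2]\E[\Delta_2^2]\le C(t-r)(v-t)\le C(v-r)^2$, and by Cauchy--Schwarz $(\E[\Delta_1\Delta_2])^2\le \E[\Delta_1^2]\E[\Delta_2^2]\le C(v-r)^2$; both are dominated by $C(v-r)^{1+\beta}$ with $\beta=1$.

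The delicate term is the last one. Because the bound must be uniform in $N\ge1$, the factor $N^{-1}$ is useless and I must extract genuine $(v-r)^{1+\beta}$ smallness from $\int_0^t\E[h_f^2h_g^2]\bar{\Upsilon}\,ds$. I would split at $s=r$. On $(r,t]$ the factor $h_f=\breve{\lambda}(t-s)$ is only bounded, but $h_g=\breve{\lambda}(t-s)-\breve{\lambda}(v-s)$ is an increment over an interval of length $v-t$; bounding $h_f^2\le(2\lambda^*)^2$ and estimating $\E[h_g^2]$ via Lemma \ref{lem-barlambda-inc-bound} and \eqref{hypF} contributes a factor $(v-t)^{\min(\rho,2\alpha)}$, and integrating over the interval of length $t-r$ yields $\le C(t-r)(v-t)^{\min(\rho,2\alpha)}\le C(v-r)^{1+\min(\rho,2\alpha)}$. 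On $[0,r]$, however, both $h_f$ and $h_g$ are increments over the \emph{adjacent} intervals $[r-s,t-s]$ and $[t-s,v-s]$, and here the argument is genuinely new.

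The main obstacle is precisely the estimate of $\E\big[(\breve{\lambda}(t-s)-\breve{\lambda}(r-s))^2(\breve{\lambda}(v-s)-\breve{\lambda}(t-s))^2\big]$ on $[0,r]$: a product-of-squared-adjacent-increments bound for the newly-infected $\breve{\lambda}$, which is the analogue of Assumption \ref{AS-lambda}(i)(b) assumed only for $\breve{\lambda}^0$. I would use the piecewise representation \eqref{eqn-lambda}, writing each increment as a H\"older-continuous part (controlled by \eqref{eqn-lambda-inc}) plus a jump of size $\le\lambda^*$ carrying the indicator that some $\xi^j$ lies in the relevant interval. In the expansion, the continuous$\times$continuous and continuous$\times$jump contributions are handled by \eqref{hypF} and \eqref{estim-int-F} with exponents exceeding $1$ (of order $(v-r)^{4\alpha}$ and $(v-r)^{2\alpha+\rho}$); the genuinely new piece is jump$\times$jump, which requires two distinct jumps $\xi^i\in[r-s,t-s]$, $\xi^{i+1}\in(t-s,v-s]$ and hence $\xi^{i+1}-\xi^i\le v-r$. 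Conditioning on $\xi^i$ and invoking \eqref{increments} gains a factor $(v-r)^\rho$, while integrating the single-jump probability $F_i(t-s)-F_i(r-s)$ in $s$ against \eqref{estim-int-F} gains $(t-r)$, producing $\le C(v-r)^{1+\rho}$; this is the sole use of \eqref{increments}. Collecting exponents gives $\int_0^t\E[h_f^2h_g^2]\bar{\Upsilon}\,ds\le C(v-r)^{1+\beta_0}$ with $\beta_0=\min(\rho,2\alpha-1)>0$, and taking $\beta:=\min(1,\beta_0)$ completes the proof.
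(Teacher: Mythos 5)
Your proposal is correct and follows essentially the same route as the paper: the same application of \eqref{mixed2moment}, the same splitting of the diagonal term at $s=r$, and the same treatment of the jump$\times$jump contribution on $[0,r]$ via conditioning and \eqref{increments}, with \eqref{estim-int-F} supplying the extra factor of $t-r$. The only (harmless) deviation is that you bound the squared cross-covariance term by Cauchy--Schwarz from the single-increment second moments, where the paper estimates its two summands directly; both yield the required $(v-r)^{1+\beta}$ bound.
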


\begin{proof}

We deduce from \eqref{mixed2moment} the following identity
\begin{equation}\label{estim2+2}
\begin{split}
\E &\left[\big| \wt{\mathfrak{I}}^N_1(t) -  \wt{\mathfrak{I}}^N_1(r) \big|^2 \big| \wt{\mathfrak{I}}^N_1(t) -  \wt{\mathfrak{I}}^N_1(v) \big|^2 \right] =
\frac{1}{N}\E\int_r^t\breve{\lambda}^2(t-s)\left[\breve{\lambda}(v-s)-\breve{\lambda}(t-s)\right]^2\bar\Upsilon(s)ds\\
&\quad+\frac{1}{N}\E\int_0^r\left[\breve{\lambda}(t-s)-\breve{\lambda}(r-s)\right]^2\left[\breve{\lambda}(v-s)-\breve{\lambda}(t-s)\right]^2\bar{\Upsilon}(s)ds\\
&\quad+2\bigg(\E\int_r^t\breve{\lambda}(t-s)\left[\breve{\lambda}(v-s)-\breve{\lambda}(t-s)\right]\bar\Upsilon(s)ds\\
&\quad\quad\quad
+\E\int_0^r\left[\breve{\lambda}(t-s)-\breve{\lambda}(r-s)\right]\left[\breve{\lambda}(v-s)-\breve{\lambda}(t-s)\right]\bar{\Upsilon}(s)ds\bigg)^2\\
&\quad+ \left(\E\int_r^t\breve{\lambda}^2(t-s)\bar\Upsilon(s)ds+\E\int_0^r\left[\breve{\lambda}(t-s)-\breve{\lambda}(r-s)\right]^2\bar\Upsilon(s)ds\right)\\
&\quad\quad\quad\times\left(\E\int_t^v\breve{\lambda}^2(v-s)\bar\Upsilon(s)ds+\E\int_0^t\left[\breve{\lambda}(v-s)-\breve{\lambda}(t-s)\right]^2\bar{\Upsilon}(s)ds\right)\,.
\end{split}
\end{equation}
We now bound each of the four terms on the right of \eqref{estim2+2}. It follows from Lemma \ref{lem-barlambda-inc-bound} and \eqref{eqn-lambda-inc} and \eqref{hypF} in Assumption \ref{AS-lambda} that for some constant $C$, the first term on the right of \eqref{estim2+2} is bounded by  
\[ C\frac{(\lambda^\ast)^3}{N}(t-r)((v-t)^{2\alpha}+(v-t)^{\rho})\lesssim C(v-r)^{1+\rho\wedge(2\alpha)}\,.\]
We next consider the third term. The absolute value of the first summand in the square  is clearly bounded by 
$(\lambda^\ast)^3(t-r)$. The second summand is bounded by $\lambda^\ast r (t-r)^\alpha (v-t)^\alpha$, plus
\[ C\sum_{j=1}^{k-1}\int_0^r (F_j(t-s)-F_j(r-s)+F_j(v-s)-F_j(t-s))ds\le 2Ck[ (t-r)+(v-t)],\]
where we have used twice \eqref{estim-int-F}. Finally the third term is bounded by a constant times
\[(v-r)^{4\alpha}+(v-r)^2=2(v-r)^{1+(4\alpha-1)\wedge1}\,.\]
By similar arguments, the first factor in the last term is bounded by a constant times
\[ t-r+(t-r)^{2\alpha}+\sum_{j=1}^{k-1}\int_0^r [F_j(t-s)-F_j(r-s)]ds\le t-r+(t-r)^{2\alpha}+k(t-r)\,.\]
Since the second factor can be estimated in the same way, with however $t-r$ replaced by $v-t$, we conclude that the last term is again bounded by a constant times
$(v-r)^2+(v-r)^{4\alpha}$.

It remains to consider the second term, which is a bit more delicate. We disregard the factor $1/N$.  
Define $A_i(s)=\{r-s<\xi^i\le t-s\}$, $B_j(s)=\{t-s<\xi^j\le v-s\}$, and note that $\P(A_i(s))=F_i(t-s)-F_i(r-s)$,
$\P(B_j(s))=F_j(v-s)-F_j(t-s)$. We disregard the factor $1/N$.  The integrand in the integral from $s=0$ to $s=r$ is bounded from above by a constant times
\begin{align*}
&\left[(t-r)^{2\alpha}+\sum_{i=1}^{k-1}(\bone_{A_i(s)}-\P(A_i(s)))^2\right]\times
\left[(v-t)^{2\alpha}+\sum_{i=1}^{k-1}(\bone_{B_j(s)}-\P(B_j(s)))^2\right]\\
&\le (v-r)^{4\alpha}+(v-r)^{2\alpha}\sum_{i=1}^{k-1}(\bone_{A_i(s)}-\P(A_i(s)))^2\\
&\quad+
(t-r)^{2\alpha}\sum_{i=1}^{k-1}(\bone_{B_j(s)}-\P(B_j(s)))^2\\
&\quad+\sum_{i,j=1}^{k-1}(\bone_{A_i(s)}-\P(A_i(s)))^2(\bone_{B_j(s)}-\P(B_j(s)))^2
\end{align*}
The first term on the right hand side of the last inequality is bounded by $(v-r)^{4\alpha}$. 
The expectation of the second term is bounded by $(t-r)^{2\alpha}\P(A_i(s))$, whose integral from $s=0$ to $s=r$ is bounded by 
$(t-r)(v-r)^{2\alpha}$, and the next term is treated exactly in the same way. We finally treat the last term.
We note that
\[ (\bone_{A_i(s)}-\P(A_i(s)))^2(\bone_{B_j(s)}-\P(B_j(s)))^2\]
is the square of a real number which is between $-1$ and $+1$, hence it is bounded by its absolute value.
Consequently
\begin{align*}
\E&\left\{(\bone_{A_i(s)}-\P(A_i(s)))^2(\bone_{B_j(s)}-\P(B_j(s)))^2\right\}\\
&\le \E\left(\bone_{A_i(s)\cap B_j(s)}+\bone_{A_i(s)}\P(B_j(s))+\bone_{B_j(s)}\P(A_i(s))+\P(A_i(s))\P(B_j(s))\right)\\
&=\P(A_i(s)\cap B_j(s))+3\P(A_i(s))\P(B_j(s))\,.
\end{align*}
It remains to bound each of those two last terms. Note that $\P(A_i(s)\cap B_j(s))\not=0$ only if $i<j\le k-1$, and in that case
\begin{align*}
\P(A_i(s)\cap B_j(s))&\le \P(A_i(s))\times\P(B_j(s)|A_i(s))\\
&\le\P(A_i(s))\times \sup_{u}\P(\xi_{i+1}-\xi_i\le v-r|\xi_i=u)\\
&\le C'(v-r)^\rho\P(A_i(s)),
\end{align*}
thanks to \eqref{increments} in Assumption \ref{AS-lambda}.
Since by \eqref{estim-int-F}, $\int_0^r\P(A_i(s))ds\le t-r$, the integral from $s=0$ to $s=r$ of the first term has the right bound. The second term is bounded by the same
expression, using this time \eqref{hypF} instead of \eqref{increments}.
\end{proof}

Concerning the process $\hat{\mathfrak{I}}^N_2(t)$, we can represent it as
\begin{equation}\label{hatJ2}
\hat{\mathfrak{I}}^N_2(t):=\frac{1}{\sqrt{N}}\int_0^t\int_0^\infty \bar\lambda(t-s)\bone_{u\le\Upsilon^N(s^-)}\overline{Q}(ds,du)
\,.
\end{equation}
Define
\begin{equation}\label{wtJ2}
\wt{\mathfrak{I}}^N_2(t):=\frac{1}{\sqrt{N}}\int_0^t\int_0^\infty \bar\lambda(t-s)\bone_{u\le N\bar\Upsilon(s)}\overline{Q}(ds,du)
\,.
\end{equation}
In the next proposition, we prove the moment bound for the increments of $\wt{\mathfrak{I}}^N_2$.

\begin{prop} \label{prop-wt-mfI-2-inc-moment}
There exist $C>0$ and $\beta>0$
such that for any $r<t<v$,  
\begin{equation} \label{eqn-wt-mfI-2-inc-moment}
\E \left[\big| \wt{\mathfrak{I}}^N_2(t) -  \wt{\mathfrak{I}}^N_2(r) \big|^2 \big| \wt{\mathfrak{I}}^N_2(t) -  \wt{\mathfrak{I}}^N_2(v) \big|^2 \right]\le C(v-r)^{1+\beta}\,. 
\end{equation}
\end{prop}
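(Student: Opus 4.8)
\emph{Proof plan.} The argument parallels that of Proposition \ref{prop-wt-mfI-1-inc-moment}, and is in fact cleaner because the amplitude is now the deterministic function $\bar\lambda$ rather than the random $\breve\lambda$. First I would write the two increments as stochastic integrals against $\overline{Q}$ sharing a common deterministic weight. For $a<b$, \eqref{wtJ2} gives
\[
\wt{\mathfrak{I}}^N_2(b)-\wt{\mathfrak{I}}^N_2(a)=\frac{1}{\sqrt N}\int_0^\infty\int_0^\infty h_{a,b}(s)\,\bone_{u\le N\bar\Upsilon(s)}\,\overline{Q}(ds,du),
\]
where $h_{a,b}(s):=\bar\lambda(b-s)\bone_{s\le b}-\bar\lambda(a-s)\bone_{s\le a}$ is supported on $[0,b]$ and equals $\bar\lambda(b-s)-\bar\lambda(a-s)$ on $[0,a]$ and $\bar\lambda(b-s)$ on $(a,b]$. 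Applying the mixed second moment identity \eqref{mixed2moment} to the weights $N^{-1/2}h_{r,t}(s)\bone_{u\le N\bar\Upsilon(s)}$ and $N^{-1/2}h_{t,v}(s)\bone_{u\le N\bar\Upsilon(s)}$ and carrying out the $u$-integrals via $\int_0^\infty\bone_{u\le N\bar\Upsilon(s)}\,du=N\bar\Upsilon(s)$, I obtain
\[
\E\left[\big|\wt{\mathfrak{I}}^N_2(t)-\wt{\mathfrak{I}}^N_2(r)\big|^2\big|\wt{\mathfrak{I}}^N_2(t)-\wt{\mathfrak{I}}^N_2(v)\big|^2\right]=\frac1N\int_0^t h_{r,t}^2 h_{t,v}^2\,\bar\Upsilon\,ds+2\left(\int_0^t h_{r,t}h_{t,v}\,\bar\Upsilon\,ds\right)^2+\left(\int_0^t h_{r,t}^2\,\bar\Upsilon\,ds\right)\left(\int_0^v h_{t,v}^2\,\bar\Upsilon\,ds\right).
\]

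Next I would bound the three terms using only the increment estimate $|\bar\lambda(b)-\bar\lambda(a)|\le(b-a)^\alpha+\lambda^*\sum_{j=1}^{k-1}(F_j(b)-F_j(a))$ of Lemma \ref{lem-barlambda-inc-bound} together with $\bar\lambda\le\lambda^*$ and $\bar\Upsilon\le\lambda^*$. For the last (product) term the key observation is that each factor is of the order of its time increment: on $(r,t]$ one has $h_{r,t}=\bar\lambda(t-s)=O(1)$, contributing $O(t-r)$, while on $[0,r]$ the weight is an increment of $\bar\lambda$, so that $\int_0^t h_{r,t}^2\,\bar\Upsilon\,ds\lesssim t-r$ (the $[0,r]$ part being $O((t-r)^{2\alpha})+O((t-r)^{1+\rho})$ after \eqref{hypF} and \eqref{estim-int-F}), and similarly $\int_0^v h_{t,v}^2\,\bar\Upsilon\,ds\lesssim v-t$; hence the product is $\lesssim(t-r)(v-t)\le\tfrac14(v-r)^2$. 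In the first two terms every summand carries at least one increment of $\bar\lambda$ spanning a gap of length $t-r$ or $v-t$, raised to a power $2\alpha$ or $\rho$, which always yields an exponent strictly above $1$.

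The only delicate point, exactly as in Proposition \ref{prop-wt-mfI-1-inc-moment}, is the bookkeeping of the $F_j$-increment contributions, where a naive pointwise bound would give exponent $4\rho$ or even $1$. The remedy is to lower the power of a squared c.d.f.\ increment through \eqref{hypF}, e.g.\ $(F_j(t-s)-F_j(r-s))^2\le C'(t-r)^\rho(F_j(t-s)-F_j(r-s))$, and then integrate the remaining single increment \emph{linearly} via \eqref{estim-int-F} and its shifted variant $\int_0^r(F_j(v-s)-F_j(t-s))\,ds\le v-t$, which restores an exponent $1+2\rho$. The essential simplification over Proposition \ref{prop-wt-mfI-1-inc-moment} is that, since $\bar\lambda$ is deterministic, no products of the indicator increments that gave rise to the events $A_i,B_j$ in the proof of Proposition \ref{prop-wt-mfI-1-inc-moment} occur; consequently the conditional estimate \eqref{increments} for $\P(A_i\cap B_j)$ is not needed and only \eqref{hypF} is invoked (twice). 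Collecting the exponents produced by the three terms gives \eqref{eqn-wt-mfI-2-inc-moment} with, for instance, $\beta=\min\{1,2\rho\}>0$, the $T$-dependent constants from replacing higher powers of $(v-r)$ by $(v-r)^{1+\beta}$ being absorbed into $C$. I expect this bookkeeping of the c.d.f.\ increments to be the only real obstacle, the rest being a routine application of \eqref{mixed2moment}.
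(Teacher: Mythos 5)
Your proposal is correct and follows essentially the same route as the paper: expand the mixed fourth moment via \eqref{mixed2moment}, bound each of the resulting terms with Lemma \ref{lem-barlambda-inc-bound}, \eqref{hypF} and \eqref{estim-int-F}, and note that the indicator-product difficulty of Proposition \ref{prop-wt-mfI-1-inc-moment} (and hence condition \eqref{increments}) does not arise because $\bar\lambda$ is deterministic. The only cosmetic differences are your packaging of the two increments through a single weight $h_{a,b}$ and your slightly sharper exponent $\beta=\min\{1,2\rho\}$ versus the paper's $(4\alpha-1)\wedge\rho\wedge1\wedge2\alpha$; either value suffices since the statement only asserts the existence of some $\beta>0$.
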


\begin{proof}
As in the proof of Proposition \ref{prop-wt-mfI-2-inc-moment}, we exploit \eqref{mixed2moment}.
We obtain 
\begin{align} \label{eqn-wt-mfI2-inc-22}
& \E \left[\big| \wt{\mathfrak{I}}^N_2(t) -  \wt{\mathfrak{I}}^N_2(r) \big|^2 \big| \wt{\mathfrak{I}}^N_2(t) -  \wt{\mathfrak{I}}^N_2(v) \big|^2 \right]  \non\\
&= 
\frac{1}{N}\int_r^t\bar{\lambda}^2(t-s)\left[\bar{\lambda}(v-s)-\bar{\lambda}(t-s)\right]^2\bar\Upsilon(s)ds \non\\
&\quad+\frac{1}{N}\int_0^r\left[\bar{\lambda}(t-s)-\bar{\lambda}(r-s)\right]^2\left[\bar{\lambda}(v-s)-\bar{\lambda}(t-s)\right]^2\bar{\Upsilon}(s)ds \non \\
&\quad+2\bigg(\int_r^t\bar{\lambda}(t-s)\left[\bar{\lambda}(v-s)-\bar{\lambda}(t-s)\right]\bar\Upsilon(s)ds \non \\
&\quad\quad\quad
+\int_0^r\left[\bar{\lambda}(t-s)-\bar{\lambda}(r-s)\right]\left[\bar{\lambda}(v-s)-\bar{\lambda}(t-s)\right]\bar{\Upsilon}(s)ds\bigg)^2 \non \\
&\quad+ \left(\int_r^t\bar{\lambda}^2(t-s)\bar\Upsilon(s)ds+\int_0^r\left[\bar{\lambda}(t-s)-\bar{\lambda}(r-s)\right]^2\bar\Upsilon(s)ds\right) \non \\
&\quad\quad\quad\times\left(\int_t^v\bar{\lambda}^2(v-s)\bar\Upsilon(s)ds+\int_0^t\left[\bar{\lambda}(v-s)-\bar{\lambda}(t-s)\right]^2\bar{\Upsilon}(s)ds\right)\,.
\end{align}
For the first term on the right hand side, by Lemma \ref{lem-barlambda-inc-bound} as well as the assumptions in   \eqref{eqn-lambda-inc} and \eqref{hypF},   we obtain  the upper bound: for some constant $C>0$,
\begin{align*}
&C\frac{ (\lambda^*)^3}{N}(t-r)\left[ (v-t)^{2\alpha} + (v-t)^{2\rho} \right] \lesssim C (v-r)^{1+ 2(\alpha \wedge \rho)}. 
\end{align*}
For the second term, we obtain the upper bound 
\begin{align*}
 C T \frac{\lambda^*}{N}  &\left[ (t-r)^{4\alpha} + (\lambda^*)^4 (t-r)^\rho \sum_{j=1}^{k-1}\int_0^r(F_j(t-s)-F_j(r-s))ds\right]\\
& \lesssim C (v-r)^{ 4\alpha \wedge (1+\rho)},
\end{align*}
where we have used \eqref{hypF} and \eqref{estim-int-F}.
For the third term, we obtain
\begin{align*}
&  C \Big(   t-r +   (t-r)^{2\alpha} + t-r  \Big)^2 
 \lesssim C  (v-r)^{ 4\alpha \wedge 2} 
\end{align*}
Finally, we get the following bound for the last term, 
\begin{align*}
& \Big((\lambda^*)^3 (t-r) + \lambda^* T C\left[ (t-r)^{2\alpha} + t-r  \right]  \Big)\times \Big( (\lambda^*)^3(v-t) + \lambda^* T C \left[ (v-t)^{2\alpha} + v-t \right]  \Big) \\
& \le \Big( (\lambda^*)^3(v-r) + \lambda^* T C\left[ (v-r)^{2\alpha} + v-r \right]  \Big)^2 \\
& \lesssim C  (v-r)^{4\alpha\wedge2}. 
\end{align*}
Therefore we have obtain the desired upper bound, with $\beta=(4\alpha-1)\wedge\rho\wedge 1\wedge2\alpha$. 
\end{proof}

\begin{lemma} \label{lem-hatPhi-2-boundedness}
For any $T>0$, the following results hold
\begin{equation}\label{eqn-hatSI-2-boundedness-sup-inside}
\sup_N\E\bigg[\sup_{0\le t\le T}\hat{S}^N(t)^2\bigg]<\infty,\qandq
      \sup_N\E\bigg[\sup_{0\le t\le T}\hat{\mathfrak{I}}^N(t)^2\bigg]<\infty\,,
      \end{equation}
and consequently,
\begin{align} \label{eqn-hatPhi-2-boundedness-sup-inside}
\sup_N \E\bigg[ \sup_{t\in [0,T]}\hat{\Upsilon}^N(t)^2\bigg] <\infty.
\end{align}
\end{lemma}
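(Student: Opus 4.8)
The plan is to deduce the two bounds in \eqref{eqn-hatSI-2-boundedness-sup-inside} from a single coupled Gronwall inequality, after which \eqref{eqn-hatPhi-2-boundedness-sup-inside} is immediate: from the representation \eqref{eqn-hatPhi-N-rep}, $\bar{\mathfrak{I}}^N(t)\le\lambda^*$ and $\bar{S}(t)\le1$ give $\hat{\Upsilon}^N(t)^2\le2(\lambda^*)^2\hat{S}^N(t)^2+2\hat{\mathfrak{I}}^N(t)^2$, so applying $\sup_t$ and then $\E$ transfers the two bounds to $\hat{\Upsilon}^N$. Write $a^N(t):=\E[\sup_{0\le s\le t}\hat{S}^N(s)^2]$ and $b^N(t):=\E[\sup_{0\le s\le t}\hat{\mathfrak{I}}^N(s)^2]$. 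Since $\hat{S}^N$ and $\hat{\mathfrak{I}}^N$ are $\sqrt N$ times differences of quantities bounded by $1$ and $\lambda^*$, both $a^N,b^N$ are finite (of order at most $N$) for each fixed $N$, so Gronwall's lemma applies once I prove, with constants independent of $N$,
\[
a^N(t)+b^N(t)\le C_1+C_2\int_0^t\big(a^N(s)+b^N(s)\big)ds,\qquad 0\le t\le T.
\]
The inequality $\E[\hat{\Upsilon}^N(s)^2]\le2(\lambda^*)^2a^N(s)+2b^N(s)$ is the mechanism that makes each drift term reproduce $\int_0^t(a^N+b^N)$.

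For the $\hat{S}^N$ equation \eqref{eqn-hatSn-rep} I bound $\sup_{s\le t}\hat{S}^N(s)^2$ by a constant times the sum of the squares of its four summands. The initial terms are controlled by Assumption \ref{AS-FCLT}; the martingale term by Doob's $L^2$ inequality, $\E[\sup_{s\le T}\hat{M}_A^N(s)^2]\le4\E[\langle\hat{M}_A^N\rangle(T)]=4\E[\int_0^T\bar{\Upsilon}^N(s)ds]\le4\lambda^*T$ using \eqref{eqn-int-Phi-bound}; and the drift by Cauchy--Schwarz, $\sup_{s\le t}(\int_0^s\hat{\Upsilon}^N)^2\le t\int_0^t\hat{\Upsilon}^N(u)^2du$, whose expectation is $\le T\int_0^t(2(\lambda^*)^2a^N(s)+2b^N(s))ds$. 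This gives the bound for $a^N$.

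For the $\hat{\mathfrak{I}}^N$ equation \eqref{eqn-hatfrakI-rep} the initial and drift terms are handled exactly as above (using $\bar\lambda,\bar\lambda^0,\bar\lambda^{0,I}\le\lambda^*$), so it remains to bound the four fluctuation terms uniformly. For $\hat{\mathfrak{I}}^N_{0,1},\hat{\mathfrak{I}}^N_{0,2}$ I apply Theorem \ref{th:momentsup} with $\beta=1$: conditioning on the independent initial counts and using the i.i.d. centered structure with Assumption \ref{AS-lambda}(i)(a)--(b) yields a three-point bound of the form \eqref{3pts-moments}, by a computation analogous to \eqref{eqn-mfI-02-diff-moment2}, the endpoint moments being bounded by Assumption \ref{AS-lambda}(i). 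For $\hat{\mathfrak{I}}^N_1,\hat{\mathfrak{I}}^N_2$ I split $\hat{\mathfrak{I}}^N_j=\wt{\mathfrak{I}}^N_j+(\hat{\mathfrak{I}}^N_j-\wt{\mathfrak{I}}^N_j)$; the supremum moments of the auxiliary processes $\wt{\mathfrak{I}}^N_1,\wt{\mathfrak{I}}^N_2$ (deterministic intensity $N\bar\Upsilon$) follow from Theorem \ref{th:momentsup} fed by the increment estimates of Propositions \ref{prop-wt-mfI-1-inc-moment} and \ref{prop-wt-mfI-2-inc-moment}, with endpoint moments controlled via \eqref{moment2}.

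The main obstacle is the pair of difference processes $D^N_j:=\hat{\mathfrak{I}}^N_j-\wt{\mathfrak{I}}^N_j$, which reintroduce the coupling through $\bone_{u\le\Upsilon^N(s^-)}-\bone_{u\le N\bar\Upsilon(s)}$: the $t$-dependent kernel $\breve\lambda(t-s)$ prevents using Doob (they are not martingales in $t$), and the randomness of $\Upsilon^N$ forbids a direct use of the mixed-moment formula \eqref{mixed2moment}. I bound them crudely but effectively. Since the mark-compensator of $\breve\lambda$ vanishes, $D^N_1$ equals its raw integral; bounding $|\breve\lambda|\le2\lambda^*$ and pulling it out gives $\sup_{0\le t\le T}|D^N_1(t)|\le2\lambda^*N^{-1/2}\int_0^T\int_\bD\int_0^\infty|\bone_{u\le\Upsilon^N(s^-)}-\bone_{u\le N\bar\Upsilon(s)}|\,\breve{Q}$, which no longer depends on $t$. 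Writing this raw integral as compensated part plus compensator, using $\int_0^\infty|\bone_{u\le\Upsilon^N(s^-)}-\bone_{u\le N\bar\Upsilon(s)}|du=|\Upsilon^N(s^-)-N\bar\Upsilon(s)|=\sqrt N|\hat{\Upsilon}^N(s)|$ and the $L^2$ isometry for the compensated PRM, I obtain $\E[\sup_t D^N_1(t)^2]\le CN^{-1/2}\E[\int_0^T|\hat{\Upsilon}^N(s)|ds]+C\E[(\int_0^T|\hat{\Upsilon}^N(s)|ds)^2]$; the term $D^N_2$ is analogous, its (non-vanishing) deterministic-kernel compensator contributing an extra $\int_0^t\bar\lambda(t-s)\hat{\Upsilon}^N(s)ds\le\lambda^*\int_0^t|\hat{\Upsilon}^N(s)|ds$. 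By Cauchy--Schwarz and $\E[\hat{\Upsilon}^N(s)^2]\le2(\lambda^*)^2a^N(s)+2b^N(s)$ both bounds are $\le C(1+\int_0^t(a^N(s)+b^N(s))ds)$, hence foldable into the Gronwall inequality. Collecting all estimates and invoking Gronwall yields $\sup_N(a^N(T)+b^N(T))<\infty$, i.e. \eqref{eqn-hatSI-2-boundedness-sup-inside}, and then \eqref{eqn-hatPhi-2-boundedness-sup-inside}. Note only boundedness, not the sharper $N^{-1/2}$ decay needed later in Lemmas \ref{lem-frakI1-diff-conv}--\ref{lem-frakI2-diff-conv}, is used here.
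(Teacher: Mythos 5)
Your proof is correct and uses the same ingredients as the paper's: Doob's inequality for $\hat M^N_A$, Theorem \ref{th:momentsup} fed by the three-point moment computations for $\hat{\mathfrak{I}}^N_{0,1},\hat{\mathfrak{I}}^N_{0,2}$ and by Propositions \ref{prop-wt-mfI-1-inc-moment}--\ref{prop-wt-mfI-2-inc-moment} for $\wt{\mathfrak{I}}^N_1,\wt{\mathfrak{I}}^N_2$, the monotone-envelope bound with the PRM second-moment identities for the differences $\hat{\mathfrak{I}}^N_j-\wt{\mathfrak{I}}^N_j$, and Gronwall. The only organizational difference is that the paper first establishes the weaker pointwise bound $\sup_N\sup_{t\le T}\E[\hat\Upsilon^N(t)^2]<\infty$ by a separate Gronwall step and then uses it to control the $\E\big[\big(\int_0^T|\hat\Upsilon^N(s)|ds\big)^2\big]$ terms, whereas you fold those terms directly into a single coupled Gronwall inequality for $\E[\sup_{s\le t}\hat S^N(s)^2]+\E[\sup_{s\le t}\hat{\mathfrak{I}}^N(s)^2]$; both are valid, since the a priori finiteness of these quantities for each fixed $N$ is immediate.
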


\begin{proof}
We first show that 
for any $T>0$, 
\begin{equation} \label{eqn-hatSI-2-boundedness-sup}
\sup_N\sup_{0\le t\le T}\E\big[\hat{S}^N(t)^2\big]<\infty,\qandq
      \sup_N\sup_{0\le t\le T}\E\big[\hat{\mathfrak{I}}^N(t)^2\big]<\infty\,,
      \end{equation}
       which combined with \eqref{eqn-hatPhi-N-rep} implies that 
\begin{align} \label{eqn-hatPhi-2-boundedness-sup}
\sup_N \sup_{t\in [0,T]} \E\big[\hat{\Upsilon}^N(t)^2\big] <\infty.
\end{align}
We shall use \eqref{eqn-hatPhi-N-rep}  and the two integral representations in \eqref{eqn-hatfrakI-rep} and \eqref{eqn-hatSn-rep}.
We first obtain the following estimates. It is clear that $\sup_N \sup_{t\in [0,T]} \E\big[|\hat{M}^N_A(t)|^2\big] \le \lambda^* T$  and from Assumption \ref{AS-FCLT}, there exists a constant $C>0$ such that for all $N$, 
$$\sup_N\sup_{t \in [0,T]} \E\big[\big(\hat{I}^N(0) \bar{\lambda}^{0,I}(t)\big)^2\big] \le (\lambda^*)^2 C, \qandq\sup_N\sup_{t \in [0,T]} \E\big[\big(\hat{E}^N(0) \bar{\lambda}^0(t)\big)^2\big] \le (\lambda^*)^2 C.$$ 
It is also clear that 
\begin{align*}
&\sup_N\sup_{t \in [0,T]} \E\big[\big(\hat{\mathfrak{I}}^N_{0,1}(t) \big)^2\big]  = \sup_N\sup_{t \in [0,T]}  \E[\bar{I}^N(0)] v^{0,I}(t) \le \sup_{t \in [0,T]} v^{0,I}(t) <\infty, \\
&\sup_N\sup_{t \in [0,T]} \E\big[\big(\hat{\mathfrak{I}}^N_{0,2}(t) \big)^2\big]  = \sup_N\sup_{t \in [0,T]}  \E[\bar{E}^N(0)] v^{0}(t) \le \sup_{t \in [0,T]} v^{0}(t) <\infty. 
\end{align*}
By \eqref{moment2} and  \eqref{eqn-int-Phi-bound},  we have 
\begin{align*} 
\sup_{t\in [0,T]} \E\big[|\hat{\mathfrak{I}}^N_1(t) |^2\big]  & =\sup_{t\in [0,T]} 
 \E \left[ \int_0^t \breve{\lambda}^2(t-s) \bar{\Upsilon}^N(s)ds \right]
\le  \lambda^*  \int_0^T v(s) d s < \infty,
\end{align*}
and 
again by \eqref{eqn-int-Phi-bound}, we obtain
\begin{align*}
\sup_{t\in [0,T]} \E\big[|\hat{\mathfrak{I}}^N_2(t) |^2\big] 
& =\sup_{t\in [0,T]} \E\Big[ \int_0^t  \bar{\lambda}(t-s)^2  \bar\Upsilon^N(s)ds \Big] \le  (\lambda^*)^3 T\,.
\end{align*}

Combining  \eqref{eqn-hatfrakI-rep}, \eqref{eqn-hatPhi-N-rep} and \eqref{eqn-hatSn-rep} with the simple bounds $\bar{S}(t) \le 1$ and $\bar{\mathfrak{I}}^N(t) \le \lambda^* (\bar{I}^N(0) + \bar{A}^N(t)) \le 2 \lambda^*$, and Gronwall's inequality, we obtain the claims in \eqref{eqn-hatSI-2-boundedness-sup}. 

\smallskip

We next prove \eqref{eqn-hatSI-2-boundedness-sup-inside}. 
By Doob's maximal inequality, we have  $\sup_N  \E\big[\sup_{t\in [0,T]}|\hat{M}^N_A(t)|^2\big] \le 4\sup_N  \E\big[|\hat{M}^N_A(T)|^2\big] \le 4 \lambda^* T $, and then by \eqref{eqn-hatSn-rep} and \eqref{eqn-hatPhi-2-boundedness-sup}, and by applying Gronwall's inequality, we obtain the moment property for $\hat{S}^N$ in \eqref{eqn-hatSI-2-boundedness-sup-inside} holds. 

For $\hat{\mathfrak{I}}^N(t)$, we first consider the two processes $\hat{\mathfrak{I}}^N_{0,1}(t)$ and $\hat{\mathfrak{I}}^N_{0,2}(t)$,  which can be treated in the same way, so we focus on $\hat{\mathfrak{I}}^N_{0,2}(t)$ as in the proof of Lemma \ref{lem:tightJ0}. 
Recall $ \tilde{\mathfrak{I}}^N_{0,2}(t) $ in \eqref{eqn-hatfrakI02-def}.  Under Assumption \ref{AS-lambda} (ii), 
we deduce from a computation similar to the one leading to \eqref{eqn-mfI-02-diff-moment2} and
Theorem \ref{th:momentsup} that  
\begin{align} \label{eqn-mfI02-moment-sup}
\sup_N \E\bigg[\sup_{t\in [0,T]}\tilde{\mathfrak{I}}_{0,2}^N(t)^2\bigg] <\infty.
\end{align}
We next consider $ \hat{\mathfrak{I}}^N_{0,2}(t)- \tilde{\mathfrak{I}}^N_{0,2}(t)$ which is given in \eqref{eqn-mfI-02-diff}. By 
\eqref{eqn-mfI-02-diff-moment1} and \eqref{eqn-mfI-02-diff-moment2}, applying again Theorem \ref{th:momentsup}, we obtain that
the moment property in \eqref{eqn-mfI02-moment-sup} holds for  $ \hat{\mathfrak{I}}^N_{0,2}(t)- \tilde{\mathfrak{I}}^N_{0,2}(t)$. 
Combining these two moment estimates, we deduce that the moment property in \eqref{eqn-mfI02-moment-sup} holds for $\hat{\mathfrak{I}}^N_{0,2}(t)$. 

For the processes  $\hat{\mathfrak{I}}^N_{1}(t)$, we write it as $\hat{\mathfrak{I}}^N_{1}(t) = \wt{\mathfrak{I}}^N_{1}(t) + (\hat{\mathfrak{I}}^N_{1}(t)- \wt{\mathfrak{I}}^N_{1}(t))$ and treat the decomposed terms separately.  By Proposition \ref{prop-wt-mfI-1-inc-moment}, 
 applying Theorem \ref{th:momentsup}, we obtain that
the moment property in \eqref{eqn-mfI02-moment-sup} holds for  $ \wt{\mathfrak{I}}^N_{1}(t)$. 
Now for the difference $\hat{\mathfrak{I}}^N_{1}(t)- \wt{\mathfrak{I}}^N_{1}(t)$, we have
\begin{align} \label{eqn-mfI-diff-1}
\hat{\mathfrak{I}}^N_1(t) - \wt{\mathfrak{I}}^N_1(t)
 &= \frac{1}{\sqrt{N}}\int_0^t  \int_{\bD} \int_{N(\bar\Upsilon^N(s) \wedge \bar\Upsilon(s))}^{N (\bar\Upsilon^N(s) \vee  \bar\Upsilon(s))} 
\breve{\lambda}(t-s)  \text{sign}(\bar\Upsilon^N(s) - \bar\Upsilon(s)) \breve{Q}(ds,d\lambda,du) \,.
\end{align} 
Observe that
\begin{align*}
\big|\hat{\mathfrak{I}}^N_1(t) - \wt{\mathfrak{I}}^N_1(t)\big|
 &\le \frac{1}{\sqrt{N}} ( 2\lambda^*) \int_0^t  \int_{\bD} \int_{N(\bar\Upsilon^N(s) \wedge \bar\Upsilon(s))}^{N (\bar\Upsilon^N(s) \vee  \bar\Upsilon(s))}  \breve{Q}(ds,d\lambda,du) \,,
\end{align*}
is increasing in $t$. Thus,
\begin{align*}
\E\bigg[\sup_{t\in [0,T]}\big|\hat{\mathfrak{I}}^N_1(t) - \wt{\mathfrak{I}}^N_1(t)\big|^2 \bigg]
& \le  \E\bigg[ \bigg(  \frac{1}{\sqrt{N}}( 2 \lambda^*) \int_0^T  \int_{\bD} \int_{N(\bar\Upsilon^N(s) \wedge \bar\Upsilon(s))}^{N (\bar\Upsilon^N(s) \vee  \bar\Upsilon(s))}  \breve{Q}(ds,d\lambda,du) \bigg)^2 \bigg] \\
& \le 2  \E\bigg[ \bigg(  \frac{1}{\sqrt{N}}( 2 \lambda^*) \int_0^T  \int_{\bD} \int_{N(\bar\Upsilon^N(s) \wedge \bar\Upsilon(s))}^{N (\bar\Upsilon^N(s) \vee  \bar\Upsilon(s))}  \widetilde{Q}(ds,d\lambda,du) \bigg)^2 \bigg] \\
& \quad + 2 \E \bigg[ \bigg( \sqrt{N} (2 \lambda^*) \int_0^T   \big|\bar{\Upsilon}^N(s) - \bar{\Upsilon}(s) \big| ds \bigg)^2\bigg] \\
&= 8(\lambda^*)^2 \bigg( \int_0^T \E \big[ \big|\bar\Upsilon^N(s) -  \bar\Upsilon(s) \big| \big] ds  + \E \bigg[ \bigg(\int_0^T |\hat\Upsilon^N(s)| \bigg)^2 \bigg]  \bigg).
\end{align*}
Here the first integral converges to zero as $N\to\infty$, and the second term is bounded by \eqref{eqn-hatPhi-2-boundedness-sup}.  Thus combining these, we have shown that the moment property in \eqref{eqn-mfI02-moment-sup} holds for $\hat{\mathfrak{I}}^N_{1}(t)$.

Similarly, 
we write $\hat{\mathfrak{I}}^N_{2}(t) = \wt{\mathfrak{I}}^N_{2}(t) + (\hat{\mathfrak{I}}^N_{2}(t)- \wt{\mathfrak{I}}^N_{2}(t)) $ and treat the decomposed terms separately. 
We obtain the moment property in \eqref{eqn-mfI02-moment-sup} for $\wt{\mathfrak{I}}^N_{2}(t)$ 
by Proposition \ref{prop-wt-mfI-2-inc-moment} and  Theorem \ref{th:momentsup}.
For the difference $\hat{\mathfrak{I}}^N_{2}(t)- \wt{\mathfrak{I}}^N_{2}(t)$, we have 
\begin{align} \label{eqn-mfI-diff-2}
\hat{\mathfrak{I}}^N_2(t) - \wt{\mathfrak{I}}^N_2(t)
 &= \frac{1}{\sqrt{N}}\int_0^t \int_{N(\bar\Upsilon^N(s) \wedge \bar\Upsilon(s))}^{N (\bar\Upsilon^N(s) \vee  \bar\Upsilon(s))} 
 \bar\lambda(t-s)  \text{sign}(\bar\Upsilon^N(s) - \bar\Upsilon(s)) Q(ds,du)  \non\\
 & \quad - \sqrt{N}\int_0^t  \bar\lambda(t-s) \big(\bar{\Upsilon}^N(s) - \bar{\Upsilon}(s) \big) ds\,.
\end{align}
Observe that 
\begin{align*}
\big|\hat{\mathfrak{I}}^N_2(t) - \wt{\mathfrak{I}}^N_2(t) \big|
 &\le  \frac{1}{\sqrt{N}} \lambda^* \int_0^t \int_{N(\bar\Upsilon^N(s) \wedge \bar\Upsilon(s))}^{N (\bar\Upsilon^N(s) \vee  \bar\Upsilon(s))}  Q(ds,du)  +  \sqrt{N} \lambda^* \int_0^t  \big|\bar{\Upsilon}^N(s) - \bar{\Upsilon}(s) \big| ds\,.
\end{align*}
Both terms on the right hand side are increasing in $t$. Thus, 
\begin{align*}
\E\bigg[\sup_{t\in [0,T]}\big|\hat{\mathfrak{I}}^N_2(t) - \wt{\mathfrak{I}}^N_2(t)\big|^2 \bigg]
& \le 2 \E \bigg[ \bigg( \frac{1}{\sqrt{N}} \lambda^* \int_0^T \int_{N(\bar\Upsilon^N(s) \wedge \bar\Upsilon(s))}^{N (\bar\Upsilon^N(s) \vee  \bar\Upsilon(s))}  Q(ds,du) \bigg)^2\bigg]   \\
& \qquad + 2 
\E \bigg[ \bigg( \sqrt{N} \lambda^* \int_0^T   \big|\bar{\Upsilon}^N(s) - \bar{\Upsilon}(s) \big| ds \bigg)^2\bigg]  \\
& = 2 (\lambda^*)^2 \bigg( \int_0^T \E \big[ \big|\bar\Upsilon^N(s) -  \bar\Upsilon(s) \big| \big] ds  + \E \bigg[ \bigg(\int_0^T |\hat\Upsilon^N(s)| \bigg)^2 \bigg]  \bigg).
\end{align*}
Thus, similar to above, we obtain  the moment property in \eqref{eqn-mfI02-moment-sup} holds for $\hat{\mathfrak{I}}^N_{2}(t)$.  
Finally, combining \eqref{eqn-hatfrakI-rep} with the above estimates yields that $\hat{\mathfrak{I}}^N$ 
satisfies \eqref{eqn-hatSI-2-boundedness-sup-inside}. 
\end{proof}

\subsection{Joint convergence of $\hat{\mathfrak{I}}^N_1$ and $\hat{\mathfrak{I}}^N_2$} \label{sec-mfI-1-proof}

In this subsection, we will
 show the following result.
 \begin{lemma} \label{lem-mfI12-conv}
 Under Assumptions \ref{AS-lambda} (ii) and \ref{AS-FCLT}, 
\begin{align*}
(\hat{\mathfrak{I}}^N_1,\hat{\mathfrak{I}}^N_2)\Rightarrow
 (\hat{\mathfrak{I}}_1,\hat{\mathfrak{I}}_2) \qinq \bD^2 \qasq N \to \infty,
\end{align*}
where $(\hat{\mathfrak{I}}_1,\hat{\mathfrak{I}}_2)$  is a centered Gaussian process with covariance functions: for $t,t'\ge 0$, 
$\Cov( \hat{\mathfrak{I}}_1(t),  \hat{\mathfrak{I}}_2(t'))=0$ and 
 \begin{align*}
\Cov( \hat{\mathfrak{I}}_1(t),  \hat{\mathfrak{I}}_1(t'))
&= \int_0^{t\wedge t'} \Cov(\lambda(t-s), \lambda(t'-s)) \bar{S}(s)\bar{\mathfrak{I}}(s)  ds,\\
\Cov( \hat{\mathfrak{I}}_2(t),  \hat{\mathfrak{I}}_2(t')) &=  \int_0^{t\wedge t'}  \bar{\lambda}(t-s) \bar{\lambda}(t'-s)  \bar{S}(s)\bar{\mathfrak{I}}(s)  ds. 
\end{align*}
 \end{lemma}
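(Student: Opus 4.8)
The plan is to prove the convergence in $\bD^2$ by a converging-together argument: first replace the pair $(\hat{\mathfrak{I}}^N_1,\hat{\mathfrak{I}}^N_2)$ by the auxiliary pair $(\wt{\mathfrak{I}}^N_1,\wt{\mathfrak{I}}^N_2)$ of \eqref{eqn-wtmfI-1-def} and \eqref{wtJ2}, in which the random intensity $\Upsilon^N(s^-)$ is frozen at its deterministic FLLN value $N\bar\Upsilon(s)$, and then show that the auxiliary pair converges to the Gaussian process $(\hat{\mathfrak{I}}_1,\hat{\mathfrak{I}}_2)$. The point of the frozen-intensity processes is that their compensator is deterministic, so the exact moment and exponential-moment formulas of Lemma \ref{lem:expmoment} apply verbatim.

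The first step is the asymptotic equivalence $\sup_{0\le t\le T}|\hat{\mathfrak{I}}^N_j(t)-\wt{\mathfrak{I}}^N_j(t)|\to0$ in probability for $j=1,2$. For $j=1$ the difference \eqref{eqn-mfI-diff-1} is an integral of $\breve{\lambda}$ against $\breve{Q}$ over the band $u\in\bigl(N(\bar\Upsilon^N\wedge\bar\Upsilon),\,N(\bar\Upsilon^N\vee\bar\Upsilon)\bigr]$; bounding $|\breve{\lambda}|\le 2\lambda^*$ and dropping the sign produces a process nondecreasing in $t$, so its supremum is attained at $T$ and $\E[\sup_t|\cdot|^2]\le (\lambda^*)^2\int_0^T\E|\bar\Upsilon^N-\bar\Upsilon|\,ds\to0$ by the FLLN and the uniform bound \eqref{eqn-int-Phi-bound}. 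For $j=2$ the difference \eqref{eqn-mfI-diff-2} is the compensated martingale that remains after the two $\int_0^t\bar\lambda(t-s)\hat\Upsilon^N(s)\,ds$ compensator contributions cancel; its pointwise second moment is $\E\int_0^t\bar\lambda^2(t-s)|\bar\Upsilon^N-\bar\Upsilon|\,ds\to0$, and I would upgrade this to uniform-in-$t$ control via Theorem \ref{th:momentsup}, applied to the difference itself, using an increment moment bound proved exactly as in Proposition \ref{prop-wt-mfI-2-inc-moment} but with $\bar\Upsilon$ replaced by $|\bar\Upsilon^N-\bar\Upsilon|$.

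The second step is the convergence of the auxiliary pair. Tightness of each $\wt{\mathfrak{I}}^N_j$ in $\bD$ follows from the increment bounds of Propositions \ref{prop-wt-mfI-1-inc-moment} and \ref{prop-wt-mfI-2-inc-moment}, which are precisely the sufficient moment criterion \eqref{3pts-moments} with $\beta=1$, via the tightness criterion Theorem \ref{13.5}; joint tightness in $\bD^2$ is then automatic since both processes vanish at $0$. For the finite-dimensional distributions I would lift both frozen-intensity processes to integrals against the single compensated PRM $\wt{Q}$ on $\R_+\times\bD\times\R_+$, writing $\wt{\mathfrak{I}}^N_2$ with an integrand not depending on the $\lambda$-coordinate, and compute the joint characteristic function of an arbitrary linear combination $\sum_l a_l\wt{\mathfrak{I}}^N_1(t_l)+\sum_m b_m\wt{\mathfrak{I}}^N_2(s_m)$ from the exponential-moment formula in Lemma \ref{lem:expmoment}. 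Since $N\bar\Upsilon(s)$ is deterministic, the $u$-integral contributes the factor $N\bar\Upsilon(s)$, and the elementary limit $N\bigl(e^{iN^{-1/2}h}-1-iN^{-1/2}h\bigr)\to-\tfrac12 h^2$ (justified by dominated convergence using $|\breve{\lambda}|,|\bar\lambda|\le 2\lambda^*$ and $\bar\Upsilon\le\lambda^*$) yields the limiting log-characteristic function $-\tfrac12\int_0^\infty\bar\Upsilon(s)\,\E[h(s)^2]\,ds$. Expanding $\E[h^2]$ and using $\E[\breve{\lambda}(\cdot)]=0$ annihilates every cross term between $\breve{\lambda}$ and $\bar\lambda$, which simultaneously forces $\Cov(\hat{\mathfrak{I}}_1,\hat{\mathfrak{I}}_2)=0$ and reproduces the two diagonal covariances, with $\Cov(\lambda(t-s),\lambda(t'-s))$ and $\bar\lambda(t-s)\bar\lambda(t'-s)$ integrated against $\bar S\bar{\mathfrak{I}}$; this identifies the Gaussian limit.

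Combining the two steps through the converging-together lemma gives the claim. I expect the main obstacle to be the uniform-in-$t$ control of $\hat{\mathfrak{I}}^N_2-\wt{\mathfrak{I}}^N_2$ in the first step: unlike the $j=1$ case it is neither monotone in $t$ nor a martingale in $t$ (the integrand $\bar\lambda(t-s)$ depends on $t$), so Doob's inequality is unavailable and one must instead lean on the three-point moment-of-supremum criterion of Theorem \ref{th:momentsup} — exactly the device Lemma \ref{lem-hatPhi-2-boundedness} was designed to feed. A secondary but essential technicality is the bookkeeping in the second step showing that the two frozen-intensity processes are genuine $\wt{Q}$-integrals driven by a \emph{common} measure, so that their \emph{joint} characteristic function factorizes and the orthogonality $\Cov(\hat{\mathfrak{I}}_1,\hat{\mathfrak{I}}_2)=0$ emerges from the mean-zero property of $\breve{\lambda}$ rather than from an independence assumption.
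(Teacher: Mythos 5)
Your overall architecture is the paper's: freeze the intensity at $N\bar\Upsilon$, prove finite-dimensional convergence of $(\wt{\mathfrak{I}}^N_1,\wt{\mathfrak{I}}^N_2)$ via the exponential-moment formula of Lemma \ref{lem:expmoment} applied to the common compensated PRM $\wt{Q}$, get tightness from Propositions \ref{prop-wt-mfI-1-inc-moment}--\ref{prop-wt-mfI-2-inc-moment} and Theorem \ref{13.5}, and then kill the differences. Your second step is sound, including the observation that $\Cov(\hat{\mathfrak{I}}_1,\hat{\mathfrak{I}}_2)=0$ falls out of $\E[\breve\lambda(\cdot)]=0$ rather than independence. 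The gap is in your first step, in both halves.

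For $j=1$: once you bound $|\breve\lambda|\le2\lambda^*$ and drop the sign, your majorant is $N^{-1/2}$ times the \emph{raw} (uncompensated) count of points of $\breve{Q}$ in the band, and its second moment is not $\int_0^T\E|\bar\Upsilon^N-\bar\Upsilon|\,ds$: decomposing the count into compensated part plus compensator, the compensator contributes $\E\big[\big(\int_0^T|\hat\Upsilon^N(s)|\,ds\big)^2\big]$, which is bounded by Lemma \ref{lem-hatPhi-2-boundedness} but does not vanish. So this route yields only $\sup_N\E[\sup_t|\hat{\mathfrak{I}}^N_1-\wt{\mathfrak{I}}^N_1|^2]<\infty$, not convergence to zero; the cancellation $\int_\bD\breve\lambda(t-s)\,\nu(d\lambda)=0$ that makes the pointwise second moment vanish is destroyed by taking absolute values. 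For $j=2$: the increment bound you propose cannot be obtained ``exactly as in Proposition \ref{prop-wt-mfI-2-inc-moment} with $\bar\Upsilon$ replaced by $|\bar\Upsilon^N-\bar\Upsilon|$,'' because the mixed second-moment identity \eqref{mixed2moment} is a formula for deterministic integrands against a PRM with deterministic mean measure, whereas the band $\big(N(\bar\Upsilon^N\wedge\bar\Upsilon),N(\bar\Upsilon^N\vee\bar\Upsilon)\big]$ is random and adapted to $Q$, so the fourth-moment computation produces uncontrolled cross terms. Moreover Theorem \ref{th:momentsup} only ever delivers \emph{boundedness} of $\E[\sup_t|\cdot|^2]$, never convergence to zero, so it cannot ``upgrade'' pointwise convergence by itself. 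The repair is what the paper actually does in Lemmas \ref{lem-frakI1-diff-conv} and \ref{lem-frakI2-diff-conv}: prove $\E[(\hat{\mathfrak{I}}^N_j(t)-\wt{\mathfrak{I}}^N_j(t))^2]\to0$ pointwise by the $L^2$ isometry for the compensated integral, and separately prove tightness of the difference processes via the elementary criterion of Lemma \ref{Lem-20}, splitting the increments into monotone-majorized pieces and feeding in the sup-moment bound \eqref{eqn-hatPhi-2-boundedness-sup-inside} on $\hat\Upsilon^N$.
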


 We shall first show that 
 $(\wt{\mathfrak{I}}^N_1,\wt{\mathfrak{I}}^N_2)\Rightarrow
 (\hat{\mathfrak{I}}_1,\hat{\mathfrak{I}}_2)$ in $\bD^2$, as $N\to\infty$. Given the results in 
 Propositions \ref{prop-wt-mfI-1-inc-moment} 
 and \ref{prop-wt-mfI-2-inc-moment}, Theorem \ref{13.5} tells us that it remains to prove that the finite dimensional distributions of  $(\wt{\mathfrak{I}}^N_1,\wt{\mathfrak{I}}^N_2)$ converge to those $(\hat{\mathfrak{I}}_1,\hat{\mathfrak{I}}_2)$, which we establish in the first Lemma which follows. It will then remain to prove that  $\hat{\mathfrak{I}}^N_1-\wt{\mathfrak{I}}^N_1\to0$ and $\hat{\mathfrak{I}}^N_2-\wt{\mathfrak{I}}^N_2\to0$ in $\bD$ in probability, as $N\to\infty$ which will be done in the next two Lemmas.
 
 \begin{lemma} \label{lem-hatmfI1-2}
For any $k\ge1$, $0<t_1<t_2<\cdots<t_k$,  as $N\to\infty$,
\begin{equation} \label{eqn-hatfrakI1-conv}
 \bigg((\wt{\mathfrak{I}}^N_1(t_1),\wt{\mathfrak{I}}^N_2(t_1)),\ldots,(\wt{\mathfrak{I}}^N_1(t_k),\wt{\mathfrak{I}}^N_2(t_k))\bigg)\RA   
 \bigg((\hat{\mathfrak{I}}_1(t_1),\hat{\mathfrak{I}}_2(t_1)),\ldots,(\hat{\mathfrak{I}}_1(t_k),\hat{\mathfrak{I}}_2(t_k)) \bigg)
 \end{equation} 
 in $\R^{2k}$,
where $(\hat{\mathfrak{I}}_1,\hat{\mathfrak{I}}_2)$ is given in 
Lemma \ref{lem-mfI12-conv}.  
\end{lemma}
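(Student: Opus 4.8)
The plan is to establish \eqref{eqn-hatfrakI1-conv} via the Cram\'er--Wold device together with the exponential formula of Lemma \ref{lem:expmoment}. The essential observation is that both $\wt{\mathfrak{I}}^N_1$ and $\wt{\mathfrak{I}}^N_2$ can be represented as stochastic integrals against the \emph{same} compensated PRM $\wt{Q}$ on $\R_+\times\bD\times\R_+$: the representation \eqref{eqn-wtmfI-1-def} already does this for $\wt{\mathfrak{I}}^N_1$, and since $\bar\lambda(t-s)$ does not depend on the dummy variable $\lambda\in\bD$, integrating \eqref{wtJ2} over $\bD$ against the law of $\lambda$ shows that
\begin{align*}
\wt{\mathfrak{I}}^N_2(t)=N^{-1/2}\int_0^t\int_\bD\int_0^\infty \bar\lambda(t-s)\bone_{u\le N\bar\Upsilon(s)}\wt{Q}(ds,d\lambda,du)\,.
\end{align*}
Fixing real coefficients $(a_\ell,b_\ell)_{1\le\ell\le k}$, I would then write the linear combination $Z^N:=\sum_{\ell=1}^k\big(a_\ell\wt{\mathfrak{I}}^N_1(t_\ell)+b_\ell\wt{\mathfrak{I}}^N_2(t_\ell)\big)$ as a single integral $Z^N=\int f_N\,\wt{Q}$, where
\begin{align*}
f_N(s,\lambda,u)=N^{-1/2}\sum_{\ell=1}^k\big(a_\ell\breve\lambda(t_\ell-s)+b_\ell\bar\lambda(t_\ell-s)\big)\bone_{s\le t_\ell}\bone_{u\le N\bar\Upsilon(s)}\,.
\end{align*}

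Next I would compute the characteristic function $\E[e^{i\theta Z^N}]=\exp\big(\int[e^{i\theta f_N}-1-i\theta f_N]\,\nu\big)$ using Lemma \ref{lem:expmoment}, where $\nu(ds,d\lambda,du)=ds\times(\text{Law }\lambda)\times du$; the required integrability is immediate since $f_N$ is bounded and supported on a set of finite $\nu$--measure. The quadratic part is \emph{exact} after the inner $u$--integration: because $\int_0^{N\bar\Upsilon(s)}du=N\bar\Upsilon(s)$ cancels the factor $N^{-1}$ coming from $f_N^2$, one gets
\begin{align*}
\int f_N^2\,\nu=\int_0^\infty\bar\Upsilon(s)\,\E_\lambda\Big[\Big(\sum_{\ell}\big(a_\ell\breve\lambda(t_\ell-s)+b_\ell\bar\lambda(t_\ell-s)\big)\bone_{s\le t_\ell}\Big)^2\Big]ds\,,
\end{align*}
with $\bar\Upsilon=\bar S\bar{\mathfrak{I}}$. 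Since $\E[\breve\lambda]=0$, the mixed $a_\ell b_{\ell'}$ terms vanish, which is precisely the source of the identity $\Cov(\hat{\mathfrak{I}}_1,\hat{\mathfrak{I}}_2)=0$; expanding the remaining squares and comparing with Lemma \ref{lem-mfI12-conv} shows that $\int f_N^2\,\nu=\sum_{\ell,\ell'}a_\ell a_{\ell'}\Cov(\hat{\mathfrak{I}}_1(t_\ell),\hat{\mathfrak{I}}_1(t_{\ell'}))+\sum_{\ell,\ell'}b_\ell b_{\ell'}\Cov(\hat{\mathfrak{I}}_2(t_\ell),\hat{\mathfrak{I}}_2(t_{\ell'}))=:\sigma^2$, independently of $N$.

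It then remains only to control the remainder. Using $|e^{ix}-1-ix+\tfrac12 x^2|\le\tfrac16|x|^3$ and the bound $|f_N|\le N^{-1/2}C\bone_{u\le N\bar\Upsilon(s)}$ (with $C$ depending on the coefficients and on $\lambda^*$, via the boundedness in Assumption \ref{AS-lambda}(i) and $\bar\Upsilon\le\lambda^*$), the inner $u$--integral again produces a factor $N\bar\Upsilon(s)$, so that $\int|f_N|^3\,\nu\le N^{-3/2}C^3\int_0^T N\bar\Upsilon(s)\,ds\le N^{-1/2}C^3\lambda^*T\to0$, where $T=\max_\ell t_\ell$. Hence $\int[e^{i\theta f_N}-1-i\theta f_N]\,\nu\to-\tfrac12\theta^2\sigma^2$, so $\E[e^{i\theta Z^N}]\to e^{-\theta^2\sigma^2/2}$, the characteristic function of $\mathcal{N}(0,\sigma^2)$. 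By Cram\'er--Wold this yields \eqref{eqn-hatfrakI1-conv}. The main obstacle is conceptual rather than computational: one must recognize that $\wt{\mathfrak{I}}^N_1$ and $\wt{\mathfrak{I}}^N_2$ should be driven by the common PRM $\wt{Q}$, so that the joint law, its Gaussian limit, and in particular the vanishing cross-covariance all emerge from a single application of the exponential formula; handling them through separate, independent PRMs would miss the correct joint structure. The remaining technical point, the vanishing of the cubic remainder, is then a routine consequence of the uniform boundedness in Assumption \ref{AS-lambda}(i).
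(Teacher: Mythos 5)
Your proposal is correct and follows essentially the same route as the paper: both represent $\wt{\mathfrak{I}}^N_1$ and $\wt{\mathfrak{I}}^N_2$ as integrals against the common compensated PRM $\wt{Q}$, apply the exponential formula of Lemma \ref{lem:expmoment} to the characteristic function of a linear combination, and identify the Gaussian limit with the stated covariances (including the vanishing cross-covariance from $\E[\breve\lambda]=0$). Your explicit treatment of the cubic remainder merely fills in a step the paper leaves as "easily deduced."
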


\begin{proof}
Recall \eqref{eqn-wtmfI-1-def} and \eqref{wtJ2}.
Note that the process $ \wt{\mathfrak{I}}^N_2(t) $ 
can be equivalently
written as follows using $\wt{Q}$: 
\begin{align}  \label{eqn-wtJ2-wtQ}
 \wt{\mathfrak{I}}^N_2(t) & = \frac{1}{\sqrt{N}} 
 \int_0^t\int_\bD \int_0^\infty   \bar{\lambda}(t-s) \bone_{u \le N \bar\Upsilon(s)} \wt{Q}(ds,d \lambda, du)\,. 
\end{align}
In order to simplify the notations, we start with the case $k=2$.
For any $\theta_1,\theta_2, \theta'_1, \theta'_2\in\R$, $t,t'>0$, we compute the limit as $N\to\infty$ of
\[ \E\left[\exp\left(i\theta_1\wt{\mathfrak{I}}^N_1(t)+i\theta_2\wt{\mathfrak{I}}^N_2(t)+i\theta'_1\wt{\mathfrak{I}}^N_1(t')+i\theta'_2\wt{\mathfrak{I}}^N_2(t')\right)\right]\,.\]
We apply Lemma \ref{lem:expmoment} to the particular case $E=\R_+\times \bD\times\R_+$, $Q=\breve{Q}$ and 
\begin{align*}
 f(s,\lambda,u)=iN^{-1/2}\Big\{[\theta_1\breve\lambda(t-s) +\theta_2\bar\lambda(t-s)] {\bf1}_{s\le t} + [\theta'_1 \breve\lambda(t'-s) +\theta'_2\bar\lambda(t'-s)] {\bf1}_{s\le t'} \Big\}{\bf1}_{u\le N\bar{\Upsilon}(s)}\,,
 \end{align*}
 from which we easily deduce that
\begin{align} \label{eqn-wtfrakI1-fdd-conv}
& \lim_{N\to \infty} \E\left[\exp\left(i\theta_1\wt{\mathfrak{I}}^N_1(t)+i\theta_2\wt{\mathfrak{I}}^N_2(t)+i\theta'_1\wt{\mathfrak{I}}^N_1(t')+i\theta'_2\wt{\mathfrak{I}}^N_1(t')\right)\right] \non \\
&=\exp\bigg(-\frac{(\theta_1)^2}{2}\int_0^{t}\E\big[ \breve\lambda^2(t-s)   \big]\bar{\Upsilon}(s)ds   
-\frac{(\theta_2)^2}{2}\int_0^{t}\bar\lambda^2(t-s)\bar{\Upsilon}(s)ds \non \\
&\qquad-\frac{(\theta_1')^2}{2}\int_0^{t'}\E\big[ \breve\lambda^2(t'-s)   \big]\bar{\Upsilon}(s)ds   
-\frac{(\theta'_2)^2}{2}\int_0^{t'} \bar\lambda^2(t'-s) \bar{\Upsilon}(s)ds \non \\
&\qquad 
-\theta_1\theta'_1\int_0^{t\wedge t'}\E\big[ \breve\lambda(t-s)  \breve\lambda(t'-s) \big]\bar{\Upsilon}(s)ds
-\theta_2\theta'_2\int_0^{t\wedge t'} \bar\lambda(t-s)  \bar\lambda(t'-s) \bar{\Upsilon}(s)ds
\bigg)\, .
\end{align}
We have proved that the two dimensional distributions of $(\wt{\mathfrak{I}}^N_1,\wt{\mathfrak{I}}^N_2)$ converge to  those of a centered Gaussian process, whose covariances are the ones of  
$(\hat{\mathfrak{I}}_1,\hat{\mathfrak{I}}_2)$ as given in Theorem \ref{thm-FCLT}. Note that it is immediate that the $k$ dimensional distributions also converge to those of a centered Gaussian process, and the covariances are fully determined by the above computation, hence the result.
\end{proof}

\begin{lemma} \label{lem-frakI1-diff-conv}
Under Assumptions \ref{AS-lambda} (ii) and \ref{AS-FCLT}, 
\begin{equation*}
 \hat{\mathfrak{I}}^N_1 - \wt{\mathfrak{I}}^N_1 \to  0 
\end{equation*}
in probability in $\bD$ as $N\to \infty$.
\end{lemma}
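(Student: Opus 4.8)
The plan is to deduce this convergence from an $L^2$--estimate of the uniform norm of the difference $\hat{\mathfrak{I}}^N_1-\wt{\mathfrak{I}}^N_1$, the bulk of which has in fact already been obtained inside the proof of Lemma~\ref{lem-hatPhi-2-boundedness}; what remains is to package that estimate as convergence in probability in $\bD$. Starting from the representation \eqref{eqn-mfI-diff-1}, the key structural observation is that, since $0\le\lambda,\bar\lambda\le\lambda^*$ gives $|\breve{\lambda}|\le\lambda^*$ and since $\breve{Q}$ is a \emph{nonnegative} random measure, the absolute value of the difference is dominated by
\[
\big|\hat{\mathfrak{I}}^N_1(t)-\wt{\mathfrak{I}}^N_1(t)\big|
\le \frac{\lambda^*}{\sqrt N}\int_0^t\int_\bD\int_{N(\bar\Upsilon^N(s)\wedge\bar\Upsilon(s))}^{N(\bar\Upsilon^N(s)\vee\bar\Upsilon(s))}\breve{Q}(ds,d\lambda,du),
\]
a process that is nondecreasing in $t$. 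Hence its supremum over $[0,T]$ is attained at $t=T$, which lets me move $\sup_t$ inside the expectation without any maximal inequality, and (as computed in Lemma~\ref{lem-hatPhi-2-boundedness}) yields, for every $T>0$,
\[
\E\Big[\sup_{0\le t\le T}\big|\hat{\mathfrak{I}}^N_1(t)-\wt{\mathfrak{I}}^N_1(t)\big|^2\Big]
\le(\lambda^*)^2\int_0^T\E\big[\big|\bar\Upsilon^N(s)-\bar\Upsilon(s)\big|\big]\,ds.
\]

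Next I would show that the right--hand side tends to $0$. By the FLLN we have $\bar\Upsilon^N\to\bar\Upsilon$ in $\bD$ in probability, so $\bar\Upsilon^N(s)\to\bar\Upsilon(s)$ in probability at every continuity point $s$ of $\bar\Upsilon$; combined with the uniform bounds $0\le\bar\Upsilon^N(s)\le\lambda^*$ and $0\le\bar\Upsilon(s)\le\lambda^*$ coming from \eqref{eqn-int-Phi-bound}, the bounded convergence theorem gives $\E|\bar\Upsilon^N(s)-\bar\Upsilon(s)|\to0$ for a.e.\ $s$, and a second application of bounded convergence over $s\in[0,T]$ yields $\int_0^T\E|\bar\Upsilon^N(s)-\bar\Upsilon(s)|\,ds\to0$. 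Thus $\E[\sup_{0\le t\le T}|\hat{\mathfrak{I}}^N_1(t)-\wt{\mathfrak{I}}^N_1(t)|^2]\to0$ for every $T>0$.

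Finally I would convert this into the asserted convergence in $\bD$. The displayed $L^2$--bound gives $\sup_{0\le t\le T}|\hat{\mathfrak{I}}^N_1(t)-\wt{\mathfrak{I}}^N_1(t)|\to0$ in $L^2$, hence in probability, that is, uniform convergence to $0$ on $[0,T]$ in probability. Since the Skorohod $J_1$ metric is dominated by the uniform metric and the limit $0$ is continuous, this gives $\hat{\mathfrak{I}}^N_1-\wt{\mathfrak{I}}^N_1\to0$ in $\bD([0,T])$ in probability for every $T$, and therefore in $\bD=\bD([0,\infty))$ in probability.

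In truth there is no serious obstacle here, precisely because the delicate step---interchanging $\sup_t$ with the expectation---is handled for free by the monotone domination rather than by a Doob--type inequality, which is unavailable in this non--semimartingale setting. The only points meriting care are the nonnegativity/monotonicity argument that validates the supremum bound, and the routine dominated--convergence verification that the controlling integral vanishes.
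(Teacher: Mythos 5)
There is a genuine gap, and it sits exactly at the step you describe as being handled ``for free''. Your pathwise majorant
\[
\big|\hat{\mathfrak{I}}^N_1(t)-\wt{\mathfrak{I}}^N_1(t)\big|\ \le\ Y^N(t):=\frac{\lambda^*}{\sqrt N}\int_0^t\int_\bD\int_{N(\bar\Upsilon^N(s)\wedge\bar\Upsilon(s))}^{N(\bar\Upsilon^N(s)\vee\bar\Upsilon(s))}\breve{Q}(ds,d\lambda,du)
\]
is correct and nondecreasing in $t$, but $\E[Y^N(T)^2]$ does \emph{not} tend to $0$. The integral is against the non-compensated PRM $\breve{Q}$, so its second moment is not $(\lambda^*)^2\int_0^T\E|\bar\Upsilon^N(s)-\bar\Upsilon(s)|\,ds$: decomposing $\breve{Q}$ into its compensated part plus its mean measure one picks up, in addition, the term $(\lambda^*)^2\,\E\big[\big(\int_0^T|\hat\Upsilon^N(s)|\,ds\big)^2\big]$ (exactly as the paper does for $\hat{\mathfrak{I}}^N_2-\wt{\mathfrak{I}}^N_2$ a few lines later), and this term is bounded but does not vanish. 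In fact even the first moment already fails: $\E[Y^N(T)]=\lambda^*\int_0^T\E|\hat\Upsilon^N(s)|\,ds$, which converges to a strictly positive limit. The cancellation that makes $\E[(\hat{\mathfrak{I}}^N_1(t)-\wt{\mathfrak{I}}^N_1(t))^2]\to0$ for each fixed $t$ comes from $\E[\breve\lambda]=0$, i.e., from integrating the signed integrand against the compensated measure $\wt{Q}$; putting absolute values inside to gain monotonicity destroys precisely that cancellation. (The display you quote from the proof of Lemma \ref{lem-hatPhi-2-boundedness} does end with ``$\to 0$'', but the equality there omits the square-of-the-mean term; only the boundedness conclusion is used, and is valid, in that lemma.)

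Consequently your argument delivers pointwise convergence to $0$ in probability together with $\sup_N\E\big[\sup_{t\le T}|\hat{\mathfrak{I}}^N_1(t)-\wt{\mathfrak{I}}^N_1(t)|^2\big]<\infty$, and that combination is not sufficient for convergence in $\bD$. This is why the paper's proof of the lemma is long: after the pointwise $L^2$ computation it establishes tightness of $\hat{\mathfrak{I}}^N_1-\wt{\mathfrak{I}}^N_1$ via Lemma \ref{Lem-20}, i.e., it shows $\limsup_N\delta^{-1}\P\big(\sup_{0\le u\le\delta}|\Xi^N(t+u)-\Xi^N(t)|>\ep\big)\to0$, using the increment bounds of Lemma \ref{lem-barlambda-inc-bound}, conditions \eqref{hypF} and \eqref{estim-int-F}, and the sup-moment bound on $\hat\Upsilon^N$ from Lemma \ref{lem-hatPhi-2-boundedness}. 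Some such modulus-of-continuity estimate is unavoidable; the uniform $L^2$ shortcut does not close.
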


\begin{proof}
Recall the expression of $ \hat{\mathfrak{I}}^N_1(t) - \wt{\mathfrak{I}}^N_1(t)$ in \eqref{eqn-mfI-diff-1}. 
It is straightforward that 
$
\E\big[   \hat{\mathfrak{I}}^N_1(t) - \wt{\mathfrak{I}}^N_1(t)\big]= 0,
$
and
\begin{align*}
\E\big[ \big( \hat{\mathfrak{I}}^N_1(t) - \wt{\mathfrak{I}}^N_1(t)\big)^2\big] 
&=  \E\left[ \int_0^t \breve{\lambda}(t-s)^2 \big|\bar{\Upsilon}^N(s) - \bar{\Upsilon}(s) \big| ds \right] \\
& \le (\lambda^*)^2 \int_0^t \E\big[ \big|\bar{\Upsilon}^N(s) - \bar{\Upsilon}(s)\big|\big] ds \to 0 \qasq N \to\infty.
\end{align*}
Here the convergence follows from
\begin{align} \label{eqn-exp-barPhi-conv}
\E [|\bar\Upsilon^N(s)- \bar\Upsilon(s) |] \to 0 \qasq N \to \infty\,,
\end{align}
which holds 
by \eqref{eqn-barPhi-conv} and the dominated convergence theorem. 
It then suffices to show the tightness of $\{ \hat{\mathfrak{I}}^N_1 - \wt{\mathfrak{I}}^N_1: N \in \NN\}$. By the expression in \eqref{eqn-mfI-diff-1}, 
it suffices to show the tightness of the processes $\{ \Xi^N: N \in \NN\}$ defined by
\begin{align*}
\Xi^N(t)  &:= \frac{1}{\sqrt{N}}\int_0^t \int_\bD\int_{N(\bar\Upsilon^N(s) \wedge \bar\Upsilon(s))}^{N (\bar\Upsilon^N(s) \vee  \bar\Upsilon(s))}  |\breve\lambda(t-s)| \breve{Q}(ds,d\lambda,du) .
\end{align*}
By Lemma \ref{Lem-20}, it suffices to show that
\begin{align} \label{eqn-Xi-tight-mfI1}
\limsup_{N\to\infty} \frac{1}{\delta}  \P \left(\sup_{v\in [0,\delta]}  |\Xi^N(t+v)  - \Xi^N(t)  |> \ep \right) \to 0 \qasq \delta \to 0. 
\end{align}
We have
\begin{align*}
& |\Xi^N(t+v)  - \Xi^N(t)  | \\
& \le  \frac{1}{\sqrt{N}}\int_t^{t+v} \int_\bD\int_{N(\bar\Upsilon^N(s) \wedge \bar\Upsilon(s))}^{N (\bar\Upsilon^N(s) \vee  \bar\Upsilon(s))}   | \breve\lambda(t+v-s) | \breve{Q}(ds, d\lambda,du)\\
&\quad+ \frac{1}{\sqrt{N}}\int_0^{t}  \int_\bD\int_{N(\bar\Upsilon^N(s) \wedge \bar\Upsilon(s))}^{N (\bar\Upsilon^N(s) \vee  \bar\Upsilon(s))}  | \breve\lambda(t+v-s)  -\breve\lambda(t-s) | \breve{Q}(ds, d\lambda,du)\\
& \le   \frac{\lambda^\ast}{\sqrt{N}}\int_t^{t+v} \int_{N(\bar\Upsilon^N(s) \wedge \bar\Upsilon(s))}^{N (\bar\Upsilon^N(s) \vee  \bar\Upsilon(s))} Q(ds,du)\\
&\quad + \frac{1}{\sqrt{N}}\int_0^{t} \int_\bD\int_{N(\bar\Upsilon^N(s) \wedge \bar\Upsilon(s))}^{N (\bar\Upsilon^N(s) \vee  \bar\Upsilon(s))}  \bigg(  2 v^\alpha + \lambda^*  \sum_{j=1}^{k-1} \bone_{t-s < \xi_j \le t+v-s} \\
& \qquad \qquad +  \lambda^* \sum_{j=1}^{k-1} (F_j(t+v-s) - F_j(t-s)) \bigg) \breve{Q}(ds,d\lambda,du),
\end{align*}
where the second inequality follows from  Lemma \ref{lem-barlambda-inc-bound}.
It is clear that the above upper bound is increasing in $v$. 
Thus, we obtain that for any $\ep>0$, 
\begin{align}  \label{eqn-Xi-bound-p1-mfI}
& \P \left(\sup_{v\in [0,\delta]}  |\Xi^N(t+v)  - \Xi^N(t)  |> \ep \right) \non \\
&\le\P\left( \frac{\lambda^\ast}{\sqrt{N}}\int_t^{t+\delta} \int_{N(\bar\Upsilon^N(s) \wedge \bar\Upsilon(s))}^{N (\bar\Upsilon^N(s) \vee  \bar\Upsilon(s))} Q(ds,du)> \ep/4\right)\non\\
&\quad+ \P \left(     \frac{2 \delta^\alpha}{\sqrt{N}} \int_0^{t} \int_{N(\bar\Upsilon^N(s) \wedge \bar\Upsilon(s))}^{N (\bar\Upsilon^N(s) \vee  \bar\Upsilon(s))}    {Q}(ds,du) > \ep/4 \right)\non  \\
& \quad +   \P \left(     \frac{1}{\sqrt{N}}\int_0^{t}  \int_\bD \int_{N(\bar\Upsilon^N(s) \wedge \bar\Upsilon(s))}^{N (\bar\Upsilon^N(s) \vee  \bar\Upsilon(s))} \bigg( \lambda^* \sum_{j=1}^{k-1} \bone_{t-s < \xi_j \le t+\delta-s} \bigg) \breve{Q}(ds,d\lambda,du) > \ep/4 \right)\non  \\
& \quad +   \P \left(     \frac{1}{\sqrt{N}}\int_0^{t} \int_{N(\bar\Upsilon^N(s) \wedge \bar\Upsilon(s))}^{N (\bar\Upsilon^N(s) \vee  \bar\Upsilon(s))}  \bigg( \lambda^* \sum_{j=1}^{k-1} (F_j(t+\delta-s) - F_j(t-s)) \bigg) {Q}(ds,du) > \ep/4 \right). 
\end{align}
The first term is bounded by $\frac{16}{\ep^2}$ times
\begin{align}\label{simpleupperbound}
&\E\left[\left(\frac{\lambda^\ast}{\sqrt{N}}\int_t^{t+\delta} \int_{N(\bar\Upsilon^N(s) \wedge \bar\Upsilon(s))}^{N (\bar\Upsilon^N(s) \vee  \bar\Upsilon(s))} Q(ds,du)\right)^2\right]\non\\
&\le2\left\{\E\int_t^{t+\delta}|\bar{\Upsilon}^N(s)-\bar{\Upsilon}(s)|ds+
\left(\E\int_t^{t+\delta}|\hat{\Upsilon}^N(s)|ds\right)^2\right\}\non\\
&\le 2\left\{\delta\sup_{s\le T}\E|\bar{\Upsilon}^N(s)-\bar{\Upsilon}(s)|+\delta^2\sup_{s\le T}\E\left(|\hat{\Upsilon}^N(s)|^2\right)\right\},
\end{align}
where the first inequality follows from $Q(ds,du)=\overline{Q}(ds,du)+ds\times du$. We note that the first term on the right of \eqref{simpleupperbound} 
tends to $0$ as $N\to\infty$, while the $\limsup_N$ of the second term multiplied by $\delta^{-1}$ tends to $0$, as $\delta\to0$, by the moment property of $\hat\Upsilon^N$ in Lemma \ref{lem-hatPhi-2-boundedness}, which is exactly what we want. 

The second term is bounded by $ \frac{16}{\ep^2}  $ times
\begin{align*}
& \E \left[ \left( \frac{2}{\sqrt{N}} \delta^\alpha \int_0^{t} \int_{N(\bar\Upsilon^N(s) \wedge \bar\Upsilon(s))}^{N (\bar\Upsilon^N(s) \vee  \bar\Upsilon(s))}  {Q}(ds,du) \right)^2\right] \\
& \le  8\, \delta^{2\alpha}\left\{ \int_0^{t}\E |\bar\Upsilon^N(s)- \bar\Upsilon(s) |ds +    T\, \E \bigg[\sup_{s \in [0,T]} \big|\hat\Upsilon^N(s)\big|^2\bigg]\right\}. 
\end{align*}
By \eqref{eqn-FLLN-conv-SEIR}, the first term converges to zero as $N\to\infty$, while, thanks to the fact that 
$2\alpha>1$ and Lemma \ref{lem-hatPhi-2-boundedness}, $\limsup_N$ of the second term multiplied by $\delta^{-1}$ tends to $0$, as $\delta\to0$, which again is exactly what we want. 

The third term on the right hand side of \eqref{eqn-Xi-bound-p1-mfI} is bounded by $\frac{16}{\ep^2} $ times
\begin{align*}
 &   \E\left[  \left(     \frac{1}{\sqrt{N}}\int_0^{t}\int_\bD \int_{N(\bar\Upsilon^N(s) \wedge \bar\Upsilon(s))}^{N (\bar\Upsilon^N(s) \vee  \bar\Upsilon(s))}  \bigg( \lambda^* \sum_{j=1}^{k-1} \bone_{t-s < \xi_j \le t+\delta-s}  \bigg) \breve{Q}(ds,d\lambda,du) \right)^2 \right]\non  \\
 & \le  2  \E\left[  \left(     \frac{1}{\sqrt{N}}\int_0^{t}\int_\bD \int_{N(\bar\Upsilon^N(s) \wedge \bar\Upsilon(s))}^{N (\bar\Upsilon^N(s) \vee  \bar\Upsilon(s))}  \bigg( \lambda^* \sum_{j=1}^{k-1} \bone_{t-s < \xi_j \le t+\delta-s}  \bigg) \wt{Q}(ds,d\lambda,du) \right)^2 \right]\non  \\
 & \quad +  2 (\lambda^*)^2  
 \E\left[  \left(     \int_0^{t}\bigg( \sum_{j=1}^{k-1} (F_j(t+\delta-s) - F_j(t-s)) \bigg)  |\hat{\Upsilon}^N(s)|   ds \right)^2 \right].
\end{align*}
Here the first term is equal to twice
\begin{align*}
&    \int_0^{t} \E\left[ \bigg( \lambda^* \sum_{j=1}^{k-1} \bone_{t-s < \xi_j \le t+\delta-s}  \bigg) ^2   |\bar\Upsilon^N(s)- \bar\Upsilon(s) | \right] ds \non\\  
 & \le  (\lambda^*)^2  k^2  \int_0^{t}   \E [|\bar\Upsilon^N(s)- \bar\Upsilon(s) |]ds,
\end{align*}
which 
converges to zero as $N \to \infty$ by \eqref{eqn-FLLN-conv-SEIR}. The second term satisfies, thanks to
 \eqref{estim-int-F} and Lemma \ref{lem-hatPhi-2-boundedness}, 
\begin{align} \label{eqn-Fj-Upsilon-conv}
\frac{1}{\delta} \E&\left[  \left(    \int_0^{t}\bigg( \sum_{j=1}^{k-1} (F_j(t+\delta-s) - F_j(t-s)) \bigg)  |\hat{\Upsilon}^N(s)|   ds \right)^2 \right]\non\\
&\le \frac{k}{\delta}\sum_{j=1}^{k-1}\left(\int_0^t\big(F_j(t+\delta-s) - F_j(t-s)\big)ds\right)^2\  \E \bigg[\sup_{s \in [0,T]} \big|\hat\Upsilon^N(s)\big|^2\bigg] \non\\
&\le C\delta\,.
\end{align}

The fourth and last term on the right hand side of \eqref{eqn-Xi-bound-p1-mfI} is bounded by $\frac{16}{\ep^2} $
times
\begin{align*}
 &   \E\left[  \left(     \frac{1}{\sqrt{N}}\int_0^{t} \int_{N(\bar\Upsilon^N(s) \wedge \bar\Upsilon(s))}^{N (\bar\Upsilon^N(s) \vee  \bar\Upsilon(s))} \bigg( \lambda^* \sum_{j=1}^{k-1} (F_j(t+\delta-s) - F_j(t-s)) \bigg) {Q}(ds,du) \right)^2 \right]\non  \\
 & \le  2 (\lambda^*)^2  \E\left[  \left(     \frac{1}{\sqrt{N}}\int_0^{t} \int_{N(\bar\Upsilon^N(s) \wedge \bar\Upsilon(s))}^{N (\bar\Upsilon^N(s) \vee  \bar\Upsilon(s))}  \bigg( \sum_{j=1}^{k-1} (F_j(t+\delta-s) - F_j(t-s)) \bigg) \overline{Q}(ds,du) \right)^2 \right]\non  \\
& \quad + 2 (\lambda^*)^2  
 \E\left[  \left(    \int_0^{t}\bigg( \sum_{j=1}^{k-1} (F_j(t+\delta-s) - F_j(t-s)) \bigg)  |\hat{\Upsilon}^N(s)|   ds \right)^2 \right]\non  \\
 & \le   2 (\lambda^*)^2    \int_0^{t}\bigg( \sum_{j=1}^{k-1} (F_j(t+\delta-s) - F_j(t-s)) \bigg) ^2  \E [|\bar\Upsilon^N(s)- \bar\Upsilon(s) |]ds \non\\
 & \quad +  2 (\lambda^*)^2  
 \E\left[  \left(    \int_0^{t}\bigg( \sum_{j=1}^{k-1} (F_j(t+\delta-s) - F_j(t-s)) \bigg)  |\hat{\Upsilon}^N(s)|   ds \right)^2 \right]. 
\end{align*}
Here the first term converges to zero as $N \to \infty$ by \eqref{eqn-exp-barPhi-conv}. The second term also satisfies \eqref{eqn-Fj-Upsilon-conv}. 

It is then clear that \eqref{eqn-Xi-tight-mfI1} holds for $\Xi^N$. This completes the proof. 
\end{proof}

\begin{lemma} \label{lem-frakI2-diff-conv}
Under Assumption \ref{AS-lambda} (ii) and \ref{AS-FCLT}, 
\begin{equation*}
 \hat{\mathfrak{I}}^N_2 - \wt{\mathfrak{I}}^N_2 \to  0 
\end{equation*}
in probability in $\bD$ as $N\to \infty. $
\end{lemma}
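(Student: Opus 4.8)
The plan is to follow verbatim the strategy of the proof of Lemma \ref{lem-frakI1-diff-conv}, exploiting the simplification that the integrand in $\hat{\mathfrak{I}}^N_2 - \wt{\mathfrak{I}}^N_2$ involves the \emph{deterministic} function $\bar\lambda$ rather than the random $\breve\lambda$. First I would note that the difference has mean zero: taking expectations in \eqref{eqn-mfI-diff-2}, the compensator of the $Q$--integral over the region between $N(\bar\Upsilon^N\wedge\bar\Upsilon)$ and $N(\bar\Upsilon^N\vee\bar\Upsilon)$ equals precisely $\sqrt{N}\int_0^t\bar\lambda(t-s)(\bar\Upsilon^N(s)-\bar\Upsilon(s))ds$, cancelling the second term in \eqref{eqn-mfI-diff-2}. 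Hence the difference is a single integral against $\overline{Q}$, and by the isometry in Lemma \ref{lem:expmoment},
\begin{align*}
\E\big[(\hat{\mathfrak{I}}^N_2(t) - \wt{\mathfrak{I}}^N_2(t))^2\big] = \E\Big[\int_0^t \bar\lambda(t-s)^2\,\big|\bar\Upsilon^N(s) - \bar\Upsilon(s)\big|\,ds\Big] \le (\lambda^*)^2 \int_0^t \E\big[|\bar\Upsilon^N(s) - \bar\Upsilon(s)|\big]\,ds,
\end{align*}
which tends to $0$ by \eqref{eqn-exp-barPhi-conv}. Together with the vanishing mean this gives convergence of the finite-dimensional distributions to $0$.

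It then remains to prove tightness, for which I would invoke Lemma \ref{Lem-20} and verify \eqref{eqn-Xi-tight-mfI1} for $D^N:=\hat{\mathfrak{I}}^N_2-\wt{\mathfrak{I}}^N_2$. Splitting the increment $D^N(t+v)-D^N(t)$ into an integral over $(t,t+v]$ carrying the factor $\bar\lambda(t+v-s)$ and an integral over $(0,t]$ carrying the factor $\bar\lambda(t+v-s)-\bar\lambda(t-s)$, I would pass to absolute values and bound the integrands by $|\bar\lambda(t+v-s)|\le\lambda^*$ and by the second inequality of Lemma \ref{lem-barlambda-inc-bound}, namely $|\bar\lambda(t+v-s)-\bar\lambda(t-s)|\le v^\alpha+\lambda^*\sum_{j=1}^{k-1}(F_j(t+v-s)-F_j(t-s))$. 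Crucially, since $\bar\lambda$ is deterministic, no random jump indicators $\bone_{t-s<\xi_j\le t+v-s}$ arise, so exactly one of the four terms appearing in \eqref{eqn-Xi-bound-p1-mfI} is absent here; only three dominating quantities remain, each nondecreasing in $v$. Passing $\sup_{v\in[0,\delta]}$ inside reduces them to their value at $v=\delta$, and applying Markov's inequality together with the $L^2$--isometry controls the three probabilities exactly as in \eqref{simpleupperbound}, \eqref{eqn-Fj-Upsilon-conv}, and the estimate of the fourth term of \eqref{eqn-Xi-bound-p1-mfI}, using \eqref{eqn-exp-barPhi-conv}, the inequality \eqref{estim-int-F}, and the uniform moment bound $\sup_N\E[\sup_{s\le T}|\hat\Upsilon^N(s)|^2]<\infty$ from Lemma \ref{lem-hatPhi-2-boundedness}.

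The main obstacle, as in Lemma \ref{lem-frakI1-diff-conv}, is the term supported on $(t,t+\delta]$: its second moment is of order $\delta\sup_{s\le T}\E|\bar\Upsilon^N(s)-\bar\Upsilon(s)|+\delta^2\sup_{s\le T}\E|\hat\Upsilon^N(s)|^2$, and after dividing by $\delta$ the surviving $\delta^2$--contribution vanishes as $\delta\to0$ only because it is multiplied by the bounded quantity $\sup_N\E[\sup_{s\le T}|\hat\Upsilon^N(s)|^2]$. This is precisely where Lemma \ref{lem-hatPhi-2-boundedness} is indispensable, since no semimartingale/Doob argument is available to control $\hat\Upsilon^N$ directly.
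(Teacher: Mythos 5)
Your proposal is correct and follows essentially the same route as the paper: mean-zero plus the $L^2$--isometry for the finite-dimensional convergence, then tightness via Lemma \ref{Lem-20} using the increment bound of Lemma \ref{lem-barlambda-inc-bound}, the inequality \eqref{estim-int-F}, and the uniform second-moment bound on $\sup_s|\hat\Upsilon^N(s)|$ from Lemma \ref{lem-hatPhi-2-boundedness}. The only cosmetic difference is that the paper first splits the difference into the two monotone-friendly pieces $\Xi^N_1$ (integral against $Q$) and $\Xi^N_2$ (the drift $\int\bar\lambda\,|\hat\Upsilon^N|$) before estimating, which is exactly the $Q$-plus-compensator split you perform implicitly when passing to absolute values.
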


\begin{proof}
Recall the expression of $\hat{\mathfrak{I}}^N_2 - \wt{\mathfrak{I}}^N_2$ in \eqref{eqn-mfI-diff-2}. 
It is clear that 
\begin{align*}
\E\big[   \hat{\mathfrak{I}}^N_2(t) - \wt{\mathfrak{I}}^N_2(t)\big]= 0\,,
\end{align*}
and
\begin{align*}
\E\big[ \big( \hat{\mathfrak{I}}^N_2(t) - \wt{\mathfrak{I}}^N_2(t)\big)^2\big] &= \int_0^t  \bar\lambda(t-s)^2  \E\big[\big|\bar{\Upsilon}^N(s) - \bar{\Upsilon}(s) \big|\big] ds \to 0 \qasq N \to\infty\,,
\end{align*}
where the convergence follows from the bounded convergence theorem and \eqref{eqn-FLLN-conv-SEIR}. 
It then suffices to show tightness of the sequence $ \{\hat{\mathfrak{I}}_2^N-  \wt{\mathfrak{I}}_{2}^N; N \in \NN\}$. 
By the expression in  \eqref{eqn-mfI-diff-2},  tightness of the processes $ \{\hat{\mathfrak{I}}_2^N-  \wt{\mathfrak{I}}_{2}^N: N \in \NN\}$  
can be deduced from the tightness of the following two processes 
\begin{align*}
\Xi^N_1(t)  &:= \frac{1}{\sqrt{N}}\int_0^t \int_{N(\bar\Upsilon^N(s) \wedge \bar\Upsilon(s))}^{N (\bar\Upsilon^N(s) \vee  \bar\Upsilon(s))}   \bar\lambda(t-s) Q(ds,du) ,\\
 \Xi^N_2(t) &:=  \int_0^t  \bar\lambda(t-s) \big|\hat{\Upsilon}^N(s) \big| ds. 
\end{align*}
By Lemma \ref{Lem-20}, it suffices to show that for $\ell=1,2$,
\begin{align} \label{eqn-Xi-tight}
\limsup_{N\to\infty} \frac{1}{\delta}  \P \left(\sup_{v\in [0,\delta]}  |\Xi^N_\ell(t+v)  - \Xi^N_\ell(t)  |> \ep \right) \to 0 \qasq \delta \to 0. 
\end{align}

For the process $\Xi^N_1(t) $,  
we have
\begin{align*}
 |\Xi^N_1(t+v)  - \Xi^N_1(t)  | 
& \le   \frac{\lambda^\ast}{\sqrt{N}} \int_t^{t+v}  \int_{N(\bar\Upsilon^N(s) \wedge \bar\Upsilon(s))}^{N (\bar\Upsilon^N(s) \vee  \bar\Upsilon(s))}  Q(ds,du)    \\
&\quad+\frac{1}{\sqrt{N}}\int_0^{t} \int_{N(\bar\Upsilon^N(s) \wedge \bar\Upsilon(s))}^{N (\bar\Upsilon^N(s) \vee  \bar\Upsilon(s))}   | \bar\lambda(t+v-s)  - \bar\lambda(t-s) | Q(ds,du). 
\end{align*}
We already know how to treat the first term, see \eqref{simpleupperbound}.
By Lemma \ref{lem-barlambda-inc-bound}, the second term on the right hand side is bounded by
$$
  \frac{1}{\sqrt{N}}\int_0^{t} \int_{N(\bar\Upsilon^N(s) \wedge \bar\Upsilon(s))}^{N (\bar\Upsilon^N(s) \vee  \bar\Upsilon(s))} \Big(  v^\alpha +  \lambda^* \sum_{j=1}^{k-1} (F_j(t+v-s) - F_j(t-s)) \Big) Q(ds,du),
$$
which is nondecreasing in $v$. 
Thus, we obtain that for any $\ep>0$, 
\begin{align}  \label{eqn-Xi-bound-p1}
& \P \left(\sup_{v\in [0,\delta]}  |\Xi^N_1(t+v)  - \Xi^N_1(t)  |> \ep \right) \non \\
& \le\P \left( \frac{\lambda^\ast}{\sqrt{N}}\int_t^{t+\delta} \int_{N(\bar\Upsilon^N(s) \wedge \bar\Upsilon(s))}^{N (\bar\Upsilon^N(s) \vee  \bar\Upsilon(s))}  Q(ds,du)  > \ep/3  \right)\non\\
&\quad+ \P \left(     \frac{\delta^\alpha}{\sqrt{N}}  \int_0^{t} \int_{N(\bar\Upsilon^N(s) \wedge \bar\Upsilon(s))}^{N (\bar\Upsilon^N(s) \vee  \bar\Upsilon(s))}  Q(ds,du) > \ep/3 \right)\non  \\
& \quad +   \P \left(     \frac{1}{\sqrt{N}}\int_0^{t} \int_{N(\bar\Upsilon^N(s) \wedge \bar\Upsilon(s))}^{N (\bar\Upsilon^N(s) \vee  \bar\Upsilon(s))} \bigg( \lambda^* \sum_{j=1}^{k-1} (F_j(t+\delta-s) - F_j(t-s)) \bigg) Q(ds,du) > \ep/3 \right) 
\end{align}
The first term is bounded as in \eqref{simpleupperbound}. The second term is bounded by $\frac{9}{\ep^2}$ times 
\begin{align*}
&    \E \left[ \left( \frac{\delta^\alpha}{\sqrt{N}} \int_0^{t} \int_{N(\bar\Upsilon^N(s) \wedge \bar\Upsilon(s))}^{N (\bar\Upsilon^N(s) \vee  \bar\Upsilon(s))}  Q(ds,du) \right)^2\right] \\
& \le2  \E \left[ \left( \frac{\delta^\alpha}{\sqrt{N}}  \int_0^{t} \int_{N(\bar\Upsilon^N(s) \wedge \bar\Upsilon(s))}^{N (\bar\Upsilon^N(s) \vee  \bar\Upsilon(s))} \overline{Q}(ds,du) \right)^2\right]   + 2   \E \left[ \left(\delta^\alpha \int_0^{t}  |\hat{\Upsilon}^N(s)| ds   \right)^2\right]\\
& \le   2  \delta^{2\alpha} \int_0^{t}\E |\bar\Upsilon^N(s)- \bar\Upsilon(s) |ds +   2  \delta^{2\alpha} T 
 \E \bigg[\sup_{s \in [0,T]} \big|\hat\Upsilon^N(s)\big|^2\bigg].  
\end{align*}
This upper bound satisfies the proper bound \eqref{eqn-Xi-tight}, by the same argument as already used in the proof of Lemma \ref{lem-frakI1-diff-conv}.

The third term on the right hand side of \eqref{eqn-Xi-bound-p1} is bounded by $\frac{9}{\ep^2} $ times
\begin{align*}
 &   \E\left[  \left(     \frac{1}{\sqrt{N}}\int_0^{t} \int_{N(\bar\Upsilon^N(s) \wedge \bar\Upsilon(s))}^{N (\bar\Upsilon^N(s) \vee  \bar\Upsilon(s))} \bigg( \lambda^* \sum_{j=1}^{k-1} (F_j(t+\delta-s) - F_j(t-s)) \bigg) Q(ds,du) \right)^2 \right]\non  \\
 & \le  2 (\lambda^*)^2  \E\left[  \left(     \frac{1}{\sqrt{N}}\int_0^{t} \int_{N(\bar\Upsilon^N(s) \wedge \bar\Upsilon(s))}^{N (\bar\Upsilon^N(s) \vee  \bar\Upsilon(s))} \bigg( \sum_{j=1}^{k-1} (F_j(t+\delta-s) - F_j(t-s)) \bigg) \overline{Q}(ds,du) \right)^2 \right]\non  \\
& \quad + 2 (\lambda^*)^2  
 \E\left[  \left(  \int_0^{t}\bigg( \sum_{j=1}^{k-1} (F_j(t+\delta-s) - F_j(t-s)) \bigg)  |\hat{\Upsilon}^N(s)|   ds \right)^2 \right]\non  \\
 & \le   2(\lambda^*)^2    \int_0^{t}\bigg( \sum_{j=1}^{k-1} (F_j(t+\delta-s) - F_j(t-s)) \bigg) ^2  \E [|\bar\Upsilon^N(s)- \bar\Upsilon(s) |]ds \non\\
 & \quad +  2 (\lambda^*)^2  
 \E\left[  \left(    \int_0^{t}\bigg( \sum_{j=1}^{k-1} (F_j(t+\delta-s) - F_j(t-s)) \bigg)  |\hat{\Upsilon}^N(s)|   ds \right)^2 \right]. 
\end{align*}
Here the first term converges to zero as $N \to \infty$ by \eqref{eqn-FLLN-conv-SEIR}. The second term satisfies \eqref{eqn-Fj-Upsilon-conv}.

\smallskip

Next for the process $\Xi^N_2(t)$, we have 
\begin{align*} 
 \big|  \Xi^N_2(t+v)  -  \Xi^N_2(t)  \big|  
& = \left| \int_0^{t+v}  \bar\lambda(t+v-s) \big|\hat{\Upsilon}^N(s) \big| ds - \int_0^t  \bar\lambda(t-s)  \big|\hat{\Upsilon}^N(s) \big| ds\right|  \non\\
& \le   \int_t^{t+v}   \bar\lambda(t+v-s)   \big|\hat{\Upsilon}^N(s) \big| ds+ \int_0^{t}  \big|  \bar\lambda(t+v-s)  -   \bar\lambda(t-s) \big|   \big|\hat{\Upsilon}^N(s) \big| ds .
\end{align*}
The first term is bounded from above by
\[ \lambda^\ast   \int_t^{t+v}    \big|\hat{\Upsilon}^N(s) \big| ds,\]
while the second term on the right hand side can be bounded by 
$$
  \int_0^{t}    \Big(  v^\alpha +  \lambda^* \sum_{j=1}^{k-1} (F_j(t+v-s) - F_j(t-s)) \Big)ds  \bigg(\sup_{s \in [0,T]} \big|\hat\Upsilon^N(s)\big|\bigg). 
$$
Those two upper bounds are  nondecreasing in $v$, and by already used arguments, we easily establish that 
$\Xi^N_2$ satisfies \eqref{eqn-Xi-tight}.  
This completes the proof of the lemma. 
\end{proof}

\subsection{Completing the proof of the convergence $(\hat{S}^N, \hat{\mathfrak{I}}^N) \to (\hat{S}, \hat{\mathfrak{I}})$ in $\bD^2$} \label{sec-thm21-proof}

We are now ready to complete the proof of the convergence of $(\hat{S}^N, \hat{\mathfrak{I}}^N) \to (\hat{S}, \hat{\mathfrak{I}})$, stated in Theorem \ref{thm-FCLT}.
 
\begin{proof}[Proof of the convergence $(\hat{S}^N, \hat{\mathfrak{I}}^N) \to (\hat{S}, \hat{\mathfrak{I}})$ ]
We first prove the joint convergence 
\begin{equation} \label{eqn-hat-joint-conv}
 \big(\hat{E}^N(0), \hat{I}^N(0), \hat{M}^N_A, \hat{\mathfrak{I}}^N_{0,1},  \hat{\mathfrak{I}}^N_{0,2}, \hat{\mathfrak{I}}^N_1, \hat{\mathfrak{I}}^N_2\big)\RA  
   \big(\hat{E}(0), \hat{I}(0), \hat{M}_A, \hat{\mathfrak{I}}_{0,1},  \hat{\mathfrak{I}}_{0,2},  \hat{\mathfrak{I}}_1, \hat{\mathfrak{I}}_2\big)
\end{equation}
in $\RR^2 \times \bD^5$ as $N\to\infty$. 
By the independence of the variables associated with the initially and newly infected individuals, 
it suffices to show the joint convergences 
\begin{equation*}
 \big(\hat{E}^N(0), \hat{I}^N(0), \hat{\mathfrak{I}}^N_{0,1},  \hat{\mathfrak{I}}^N_{0,2}\big)\RA  
   \big(\hat{E}(0), \hat{I}(0), \hat{\mathfrak{I}}_{0,1},  \hat{\mathfrak{I}}_{0,2}\big) \qinq \RR^2 \times \bD^2 \qasq N \to \infty\,,
\end{equation*}
and 
\begin{equation} \label{eqn-hatMA-frakI-joint-conv}
 \big(\hat{M}^N_A, \hat{\mathfrak{I}}^N_1, \hat{\mathfrak{I}}^N_2\big)\RA  
   \big(\hat{M}_A,  \hat{\mathfrak{I}}_1, \hat{\mathfrak{I}}_2\big) \qinq \bD^3 \qasq N \to \infty\,.
\end{equation}
 It is also clear that the two sets of limits $   \big(\hat{E}(0), \hat{I}(0), \hat{\mathfrak{I}}_{0,1},  \hat{\mathfrak{I}}_{0,2}\big)$ and 
$ \big(\hat{M}_A,  \hat{\mathfrak{I}}_1, \hat{\mathfrak{I}}_2\big)$ are independent.  
The convergence of $\big(\hat{E}^N(0), \hat{I}^N(0), \hat{\mathfrak{I}}^N_{0,1},  \hat{\mathfrak{I}}^N_{0,2}\big)$ is straightforward. We focus on the convergence of $ \big(\hat{M}^N_A, \hat{\mathfrak{I}}^N_1, \hat{\mathfrak{I}}^N_2\big)$. 
Recall the compensated PRM $\wt{Q}(ds,d\lambda,du)$ on $\RR_+ \times \bD\times \RR_+$. 
Define an auxiliary process 
\begin{align} \label{eqn-breveAn-def}
\wt{M}^N_A (t) &:= \frac{1}{\sqrt{N}} \int_0^t \int_\bD\int_0^\infty \bone_{u \le N \bar{\Upsilon}(s)} \wt{Q}(ds,d\lambda,du)\,.
\end{align}
Recall the process $ \wt{\mathfrak{I}}^N_1(t)$ defined in \eqref{eqn-wtmfI-1-def}, where $\breve{Q}$ can be replaced by $\wt{Q}$.  Also, recall the process $ \wt{\mathfrak{I}}^N_2(t) $ in \eqref{eqn-wtJ2-wtQ} using $\wt{Q}$. 
A minor extension of the computation done in Lemma \ref{lem-hatmfI1-2} yields the joint convergence of the finite dimensional distributions of the processes $ \big(\wt{M}^N_A, \wt{\mathfrak{I}}^N_1, \wt{\mathfrak{I}}^N_2\big)$. Note that $\wt{M}^N_A$ being a martingale, its tightness is easily established.
Hence we deduce the joint convergence
\begin{equation} \label{eqn-wt-MA-frakI-12-conv}
 \big(\wt{M}^N_A, \wt{\mathfrak{I}}^N_1, \wt{\mathfrak{I}}^N_2\big)\RA  
   \big(\hat{M}_A,  \hat{\mathfrak{I}}_1, \hat{\mathfrak{I}}_2\big) \qinq  \bD^3, \qasq N \to \infty\,.
\end{equation}
Concerning the pair $(\hat{M}_A,\hat{\mathfrak{I}}_2)$, we also observe that $\hat{M}_A$ is a non--standard Brownian motion,
and $\hat{\mathfrak{I}}_2(t)=\int_0^t\bar\lambda(t-s)\hat{M}_A(ds)$. Note that $\hat{W}_S$ in Definition \ref{def-W} has the same distribution as $ - \hat{M}_A$.

A simplified version of the proofs in Lemmas \ref{lem-frakI1-diff-conv} and \ref{lem-frakI2-diff-conv} yields that
$\wt{M}^N_A- \hat{M}^N_A\to 0$ in probability in $\bD$, as $N\to \infty$. Hence
\[ (\wt{M}^N_A- \hat{M}^N_A, \wt{\mathfrak{I}}^N_1-\hat{\mathfrak{I}}_1,\wt{\mathfrak{I}}^N_2-\hat{\mathfrak{I}}_2)\to 0\]
in probability in $\bD^3$, as $N\to\infty$.
Combined with \eqref{eqn-wt-MA-frakI-12-conv}, this establishes \eqref{eqn-hatMA-frakI-joint-conv}.

Observe that the equations 
 \eqref{eqn-hatSn-rep} and \eqref{eqn-hatfrakI-rep} coupled with \eqref{eqn-hatPhi-N-rep} define uniquely the processes $ \big(\hat{S}^N,\hat{\mathfrak{I}}^N\big)$ as the solution of a two-dimensional integral equation  driven by   $ \big(\hat{E}^N(0), \hat{I}^N(0), \hat{M}^N_A,  \hat{\mathfrak{I}}^N_{0,1}, \hat{\mathfrak{I}}^N_{0,2}, \\
  \hat{\mathfrak{I}}^N_1, \hat{\mathfrak{I}}^N_2\big)$ and the fixed functions $\bar{\lambda}^0(t)$, $\bar{\lambda}(t)$, $\bar{S}(t)$ and $\bar{\mathfrak{I}}(t)$. The mapping which to those data associates the solution is continuous in the Skorohod $J_1$ topology, see Lemma 8.1 in \cite{PP-2020}. 
Thus, by the joint convergence in \eqref{eqn-hat-joint-conv}, we apply the continuous mapping theorem to conclude \eqref{eqn-hatSfrakI-conv}.

It is clear from \eqref{eqn-hatfrakI-rep} that the limit term 
$\hat{W}_{\mathfrak{I}}(t)$ in the expression of $\hat{\mathfrak{I}}(t)$ in Theorem \ref{thm-FCLT} is equal to $\hat{W}_{\mathfrak{I}}(t) = \hat{\mathfrak{I}}_{0,1}(t)+ \hat{\mathfrak{I}}_{0,2}(t)+ \hat{\mathfrak{I}}_1(t) +\hat{\mathfrak{I}}_2(t)$. 
Given the above results, it is straightforward to derive the covariance functions of $\hat{W}_{\mathfrak{I}}(t)$. Specifically, 
$\Cov(\hat{W}_{\mathfrak{I}}(t), \hat{W}_{\mathfrak{I}}(t')) = \Cov( \hat{\mathfrak{I}}_{0,1}(t),  \hat{\mathfrak{I}}_{0,1}(t')) + \Cov( \hat{\mathfrak{I}}_{0,2}(t),  \hat{\mathfrak{I}}_{0,2}(t'))
+ \Cov( \hat{\mathfrak{I}}_{1}(t),  \hat{\mathfrak{I}}_{1}(t')) + \Cov( \hat{\mathfrak{I}}_{2}(t),  \hat{\mathfrak{I}}_{2}(t'))$. 
In addition, $\Cov(\hat{M}_A(t),\hat{W}_{\mathfrak{I}}(t')) = \Cov(\hat{M}_A(t),\hat{\mathfrak{I}}_2(t')) = \int_0^{t\wedge t'} \bar{\lambda}(t'-s)  \bar{S}(s)\bar{\mathfrak{I}}(s)  ds$.

Finally we show that the limit processes $\hat{\mathfrak{I}}_1$ and $\hat{\mathfrak{I}}_2$ have a continuous version in $\bC$, given their consistent finite dimensional distributions. 
Since its increments of the processes are Gaussian and centered,
it suffices to show the continuity of the covariance functions. It is easy to calculate that 
\begin{align*}
&\E\big[\big| \hat{\mathfrak{I}}_1(t+\delta) - \hat{\mathfrak{I}}_1(t) \big|^2\big]   \\
& =
 \int_t^{t+\delta} \Var(\lambda(t+\delta-s)) \bar{\Upsilon}(s)ds + \int_0^t \Var\big( \big(\lambda(t+\delta-s) - \lambda(t-s) \big)^2\big) \bar{\Upsilon}(s)ds\,, 
\end{align*}
and
\begin{align*}
&\E\big[\big| \hat{\mathfrak{I}}_2(t+\delta) - \hat{\mathfrak{I}}_2(t) \big|^2\big]   \\
& = \int_t^{t+\delta} \bar{\lambda}(t+\delta-s)^2 \bar{\Upsilon}(s)ds + \int_0^t\big(  \bar{\lambda}(t+\delta-s) - \bar{\lambda}(t-s) \big)^2 \bar{\Upsilon}(s)ds\,. 
\end{align*}
Then we can verify the continuity property under the conditions in Assumption \ref{AS-lambda} (iii). 
\end{proof}

\section{Completing the Proof of Theorem \ref{thm-FCLT}} \label{sec-EIR-proof}

 In this section we complete the proof of Theorem \ref{thm-FCLT} by establishing the convergence of  $(\hat{E}^N, \hat{I}^N, \hat{R}^N)$ and their joint convergence with of  $(\hat{S}^N, \hat{\mathfrak{I}}^N)$. The former follows   essentially the same arguments as that of Theorem 3.2 in \cite{PP-2020}, for which we only highlight the differences. However, the joint convergence is nontrivial since the exposed and infectious periods are induced by the random infectivity functions.

We have the following representations for the processes $(\hat{E}^N, \hat{I}^N, \hat{R}^N)$: 
\begin{align}
\hat{E}^N(t) &= \hat{E}^N(0) G_0^c(t) + \hat{E}^N_0(t) + \hat{E}^N_1(t)  + \int_0^t G^c(t-s)  \hat{\Upsilon}^N(s) ds, \label{eqn-hatE-rep-SEIR}\\
\hat{I}^N(t) &= \hat{I}^N(0) F_{0,I}^c(t) + \hat{E}^N(0) \Psi_0(t) +  \hat{I}^N_{0,1}(t)+  \hat{I}^N_{0,2}(t)  + \hat{I}^N_1(t)  + \int_0^t \Psi(t-s)  \hat{\Upsilon}^N(s) ds, \label{eqn-hatI-rep-SEIR}\\
\hat{R}^N(t) &=  \hat{I}^N(0) F_{0,I}(t) + \hat{E}^N(0) \Phi_0(t) +  \hat{R}^N_{0,1}(t)+  \hat{R}^N_{0,2}(t)  + \hat{R}^N_1(t)  + \int_0^t \Phi(t-s)  \hat{\Upsilon}^N(s) ds, \label{eqn-hatR-rep-SEIR}
\end{align}
where 
\begin{align*}
&\hat{E}^N_0(t) := \frac{1}{\sqrt{N}}\sum_{j=1}^{E^N(0)} ( {\bf1}_{\zeta^0_j>t} - G_0^c(t)) , \\
& \hat{I}^N_{0,1}(t) := \frac{1}{\sqrt{N}} \sum_{k=1}^{I^N(0)} ({\bf1}_{\eta^{0,I}_k>t} - F_{0,I}^c(t)), \quad  \hat{I}^N_{0,2}(t) := \frac{1}{\sqrt{N}} \sum_{j=1}^{E^N(0)}( {\bf1}_{\zeta^0_j\le t} {\bf1}_{\zeta^0_j + \eta^0_j>t} -\Psi_0(t) ) ,
\\
&\hat{R}^N_{0,1}(t) := \frac{1}{\sqrt{N}} \sum_{k=1}^{I^N(0)} ( {\bf1}_{\eta^{0,I}_k\le t} - F_{0,I}(t)), \quad \hat{R}^N_{0,2}(t) :=\frac{1}{\sqrt{N}} \sum_{j=1}^{E^N(0)} ( {\bf1}_{\zeta^0_j + \eta^0_j\le t} - \Phi_0(t)), 
\end{align*}
and
\begin{align*}
\hat{E}_1^N(t) &:=\frac{1}{\sqrt{N}}   \sum_{i=1}^{A^N(t)} \bone_{\tau^n_i + \zeta_i >t} - \sqrt{N}  \int_0^t G^c(t-s) \bar{S}^N(s) \bar{\mathfrak{I}}^N(s) ds\,, \\
 \hat{I}^N_1(t) &:=  \frac{1}{\sqrt{N}} \sum_{i=1}^{A^N(t)}{\bf1}_{\tau^N_i + \zeta_i\le t} {\bf1}_{\tau^N_i + \zeta_i+\eta_i>t}  - \sqrt{N}  \int_0^t \Psi(t-s) \bar{S}^N(s) \bar{\mathfrak{I}}^N(s) ds ,\\
  \quad \hat{R}^N_1(t) &:=  \frac{1}{\sqrt{N}} \sum_{i=1}^{A^N(t)}{\bf1}_{\tau^N_i+\zeta_i+\eta_i\le t}- \sqrt{N}  \int_0^t \Phi(t-s) \bar{S}^N(s) \bar{\mathfrak{I}}^N(s) ds. 
 \end{align*}

\begin{lemma} \label{lem-3.11}
Under  Assumptions \ref{AS-lambda}  and  \ref{AS-FCLT},  
\begin{align*}
\big( \hat{E}^N_0, \hat{I}^N_{0,1},  \hat{I}^N_{0,2}, \hat{R}^N_{0,1},  \hat{R}^N_{0,2}\big) \RA 
\big( \hat{E}_0, \hat{I}_{0,1},  \hat{I}_{0,2}, \hat{R}_{0,1},  \hat{R}_{0,2}\big) \qinq \bD^5 \qasq N \to \infty,
\end{align*}
jointly with the convergence $\big( \hat{\mathfrak{I}}^N_{0,1},  \hat{\mathfrak{I}}^N_{0,2} \big) \RA  \big( \hat{\mathfrak{I}}_{0,1},  \hat{\mathfrak{I}}^N_{0,2}\big)$ in \eqref{eqn-hatfrakI-0-conv}.
$( \hat{\mathfrak{I}}_{0,1},\hat{I}_{0,1},\hat{R}_{0,1})$ and 
$(\hat{\mathfrak{I}}_{0,2},\hat{E}_0, \hat{I}_{0,2}, \hat{R}_{0,2})$  
are independent three-dimensional centered Gaussian processes, 
independent of $(\hat{E}(0),\hat{I}(0))$, driven by the common  random source $\lambda^{0,I}(\cdot)$ and $\lambda^{0}(\cdot)$, respectively. 
$( \hat{\mathfrak{I}}_{0,1},\hat{I}_{0,1},\hat{R}_{0,1})$ has covariance functions: 
for  $t, t'\ge 0$, 
\begin{align*}
\Cov(\hat{\mathfrak{I}}_{0,1}(t), \hat{I}_{0,1}(t')) 
&= \bar{I}(0) \big(  \E \big[ \lambda^{0,I}(t) \bone_{\eta^{0,I}>t'} \big]  -  \bar\lambda^{0,I}(t) F^c_{0,I}(t') \big) \,,  \\
\Cov(\hat{\mathfrak{I}}_{0,1}(t), \hat{R}_{0,1}(t')) 
&= \bar{I}(0) \big(  \E \big[ \lambda^{0,I}(t) \bone_{\eta^{0,I}\le t'} \big]  -  \bar\lambda^{0,I}(t) F_{0,I}(t') \big) \,,\\
\Cov(\hat{I}_{0,1}(t), \hat{I}_{0,1}(t')) &= \bar{I}(0) (F_{0,I}^c(t\vee t') - F_{0,I}^c(t) F_{0,I}^c(t')), \\
\Cov(\hat{R}_{0,1}(t), \hat{R}_{0,1}(t')) &= \bar{I}(0) (F_{0,I}(t\wedge t') - F_{0,I}(t) F_{0,I}(t')), \\
\Cov(\hat{I}_{0,1}(t), \hat{R}_{0,1}(t')) &=\bar{I}(0) ( (F_{0,I}(t') - F_{0,I}(t)) \bone(t'\ge t) - F^c_{0,I}(t) F_{0,I}(t') ). 
\end{align*}
$(\hat{\mathfrak{I}}_{0,2},\hat{E}_0, \hat{I}_{0,2}, \hat{R}_{0,2})$   has covariance functions: 
for  $t, t'\ge 0$, 
\begin{align*}
\Cov(\hat{\mathfrak{I}}_{0,2}(t), \hat{E}_0(t')) 
&=  \bar{E}(0) \big(\E \big[ \lambda^{0}(t) \bone_{\zeta^{0}> t'} \big] -\bar\lambda^{0}(t) G^c_{0}(t')   \big) \,, \\
\Cov(\hat{\mathfrak{I}}_{0,2}(t),  \hat{I}_{0,2}(t')) 
&=  \bar{E}(0) \big( \E \big[   \lambda^{0}(t) \bone_{\zeta^{0}\le t'<\zeta^{0} + \eta^{0}} \big]  - \bar\lambda^{0}(t)  \Psi_{0}(t') \big)  \,, \\
\Cov(\hat{\mathfrak{I}}_{0,2}(t), \hat{R}_{0,2}(t')) 
&=   \bar{E}(0) \big( \E \big[  \lambda^{0}(t) \bone_{\zeta^{0} + \eta^{0}\le t'}\big]  -\bar\lambda^{0}(t)\Phi_{0}(t') \big)   \,,
\end{align*}
and
\begin{align*}
\Cov(\hat{E}_0(t), \hat{E}_0(t')) &= \bar{E}(0) (G_0^c(t\vee t') - G^c_0(t) G^c_0(t')),\\
\Cov(\hat{I}_{0,2}(t), \hat{I}_{0,2}(t')) &=  \bar{E}(0) \bigg(  \int_0^{t\wedge t'} F_0^c(t\vee t'-s|s)  G_0(ds) - \Psi_0(t) \Psi_0(t') \bigg), \\
\Cov(\hat{R}_{0,2}(t), \hat{R}_{0,2}(t')) &= \bar{E}(0) \left( \Phi_0(t\wedge t') -   \Phi_0(t) \Phi_0(t')  \right), \\
\Cov( \hat{E}_{0}(t),  \hat{I}_{0,2}(t')) &= \bar{E}(0)\bone(t'\ge t) \bigg( \int_t^{t'}  F^c_0(t'-s|s)G_0(ds) - G_0^c(t) \Psi_0(t') \bigg),  \non\\
\Cov( \hat{E}_{0}(t),  \hat{R}_{0,2}(t')) &= \bar{E}(0) \bone(t'\ge t) \bigg(\int_t^{t'}F_{0}(t'-s|s) G_0(ds) - G_0^c(t)  \Phi_0(t')\bigg),    \non\\
\Cov( \hat{I}_{0,2}(t),  \hat{R}_{0,2}(t')) &= \bar{E}(0) \bone(t'\ge t) \bigg( \int_0^{t} (F_0(t'-s|s) - F_0(t-s|s)) G_0(ds) - \Psi_0(t) \Phi_0(t') \bigg).
\end{align*}

\end{lemma}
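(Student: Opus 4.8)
The plan is to exploit the block structure of the five processes together with the two infectivity processes from \eqref{eqn-hatfrakI-0-conv}. The quantities $\hat{\mathfrak{I}}^N_{0,1}$, $\hat{I}^N_{0,1}$ and $\hat{R}^N_{0,1}$ are all $N^{-1/2}$-normalized sums over $k=1,\dots,I^N(0)$ of functionals of the single i.i.d.\ sequence $\{\lambda^{0,I}_k(\cdot)\}$ (recall that $\eta^{0,I}_k$ is determined by $\lambda^{0,I}_k$), while $\hat{\mathfrak{I}}^N_{0,2}$, $\hat{E}^N_0$, $\hat{I}^N_{0,2}$ and $\hat{R}^N_{0,2}$ are $N^{-1/2}$-normalized sums over $j=1,\dots,E^N(0)$ of functionals of the independent i.i.d.\ sequence $\{\lambda^{0}_j(\cdot)\}$ (which determines $(\zeta^0_j,\eta^0_j)$). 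Since these two sequences are mutually independent, it suffices to prove joint convergence within each block separately; the asymptotic independence of the two limiting vectors then follows, and the joint statement with $(\hat{\mathfrak{I}}^N_{0,1},\hat{\mathfrak{I}}^N_{0,2})$ is automatic, because those two processes are merely extra coordinates of the respective blocks.

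First, for each block I would replace the random summation index by its deterministic counterpart, defining $\widetilde{\,\cdot\,}$ as the analogue with $I^N(0)$ (resp.\ $E^N(0)$) replaced by $N\bar{I}(0)$ (resp.\ $N\bar{E}(0)$). For the deterministic-index versions the vector-valued summands are i.i.d., centered, and bounded (the indicator coordinates trivially, the $\lambda$-coordinate by $\lambda^*$), so convergence of their $N^{-1/2}$-sums to a centered Gaussian vector in $\bD^d$ reduces, via the Cram\'er--Wold device for the finite-dimensional distributions together with coordinatewise tightness, to the one-dimensional CLT for i.i.d.\ random elements of $\bD$ (Theorem 2 in \cite{hahn1978central}). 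Its hypotheses are checked coordinate by coordinate: for the $\lambda$-coordinate through Assumption \ref{AS-lambda}(i)(a)--(b) exactly as in Lemma \ref{lem:tightJ0}, and for each indicator coordinate through the elementary three-point moment bound for indicators of the events $\{\eta^{0,I}>t\}$, $\{\zeta^0>t\}$, $\{\zeta^0\le t<\zeta^0+\eta^0\}$ and $\{\zeta^0+\eta^0\le t\}$, whose increments are controlled by increments of the corresponding distribution functions.

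Next I would show that the difference between each process and its deterministic-index analogue tends to zero in $\bD$ in probability. Each such difference is a signed sum over the $N|\bar{I}^N(0)-\bar{I}(0)|$ (resp.\ $N|\bar{E}^N(0)-\bar{E}(0)|$) extra indices, hence centered with second moment proportional to $\E[|\bar{I}^N(0)-\bar{I}(0)|]\to0$ (resp.\ for $\bar{E}$), while its tightness follows from the three-point moment criterion of Theorem \ref{13.5} by the same computation that produced \eqref{eqn-mfI-02-diff-moment2}, now carried out coordinatewise. Combining convergence of the deterministic-index versions with vanishing of the differences yields the claimed joint convergence, and the covariance functions are then read off by direct computation, each limiting covariance being the proportion $\bar{I}(0)$ or $\bar{E}(0)$ times the covariance of the corresponding pair of single-individual summand coordinates; for instance $\Cov(\hat{\mathfrak{I}}_{0,1}(t),\hat{I}_{0,1}(t'))=\bar{I}(0)\,\Cov\big(\lambda^{0,I}(t),\bone_{\eta^{0,I}>t'}\big)$ and $\Cov(\hat{I}_{0,1}(t),\hat{I}_{0,1}(t'))=\bar{I}(0)\,\Cov\big(\bone_{\eta^{0,I}>t},\bone_{\eta^{0,I}>t'}\big)$, with the exposed block handled the same way.

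The main obstacle I anticipate is the \emph{joint} tightness of the mixed vector, whose coordinates combine the continuous-path process built from $\lambda$ with piecewise-constant indicator processes, one of which ($\hat{I}^N_{0,2}$, built from $\bone_{\zeta^0\le t<\zeta^0+\eta^0}$) is the indicator of a random interval and hence non-monotone in $t$ with two jumps. Controlling its three-point increments requires bounding probabilities such as $\P(s<\zeta^0\le t,\ t<\zeta^0+\eta^0\le u)$ by products of increments of $G_0$ and of the conditional law $F_0(\cdot\,|\,u)$, which is precisely where the continuity of $G_0$, $\Phi_0$ and $F_{0,I}$ (guaranteed under the hypotheses of Theorem \ref{thm-FCLT}) is needed: it rules out atoms, prevents coordinatewise jumps from accumulating, and ensures the $J_1$-limit has continuous covariance.
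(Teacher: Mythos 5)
Your proposal follows essentially the same route as the paper: split into the two independent blocks driven by $\{\lambda^{0,I}_k\}$ and $\{\lambda^0_j\}$, replace the random summation bounds $I^N(0)$, $E^N(0)$ by $N\bar I(0)$, $N\bar E(0)$, apply the CLT for i.i.d.\ random elements of $\bD$ (the paper additionally invokes the empirical-process FCLT, Theorem 14.3 of \cite{billingsley1999convergence}, for the indicator coordinates rather than re-verifying the three-point moment bound), and then show the difference caused by the random index vanishes exactly as in Lemma \ref{lem:tightJ0}. One small caveat: your closing claim that continuity of $G_0$, $\Phi_0$, $F_{0,I}$ is \emph{needed} for the convergence is not accurate — the empirical-process FCLT handles arbitrary distribution functions, and in the paper that continuity hypothesis is only used for the final path-continuity assertion of Theorem \ref{thm-FCLT}, not for the convergence in this lemma.
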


\begin{proof}
By the independence of the sequences $\{\lambda^0_j\}_{j\ge 1}$ and $\{\lambda^{0,I}_k\}_{k\ge 1}$,
it suffices to prove the joint convergence of $\big( \hat{\mathfrak{I}}^N_{0,1}, \hat{I}^N_{0,1}, \hat{R}^N_{0,1} \big)$ and $\big( \hat{\mathfrak{I}}^N_{0,2}, \hat{E}^N_0, \hat{I}^N_{0,2}, \hat{R}^N_{0,2} \big)$ separately. 

Recall the processes $\wt{\mathfrak{I}}^N_{0,1}$ and $\wt{\mathfrak{I}}^N_{0,2}$ defined in \eqref{eqn-hatfrakI01-def} and \eqref{eqn-hatfrakI02-def}, respectively. Similarly we define $\big( \wt{E}^N_0, \wt{I}^N_{0,1},  \wt{I}^N_{0,2}, \wt{R}^N_{0,1},  \wt{R}^N_{0,2}\big)$ by replacing $E^N(0)$ and $I^N(0)$ by 
$N\bar{E}(0)$ and $N\bar{I}(0)$, respectively. 
By the FCLT for random elements in $\bD$ (see Theorem 2 in \cite{hahn1978central}, applied to the processes $\wt{\mathfrak{I}}^N_{0,1}$ and $\wt{\mathfrak{I}}^N_{0,2}$ under Assumption \ref{AS-lambda}(i) (a) and (b)) and the FCLT for empirical processes (see Theorem 14.3 in \cite{billingsley1999convergence}, applied to the processes $\big( \wt{E}^N_0, \wt{I}^N_{0,1},  \wt{I}^N_{0,2}, \wt{R}^N_{0,1},  \wt{R}^N_{0,2}\big)$), and by the definitions in \eqref{eqn-lambda-eta-0} and \eqref{eqn-lambda-eta-0I}, we obtain the joint convergences
$$\big( \wt{\mathfrak{I}}^N_{0,1}, \wt{I}^N_{0,1}, \wt{R}^N_{0,1} \big) \RA 
\big( \hat{\mathfrak{I}}_{0,1}, \hat{I}_{0,1}, \hat{R}_{0,1} \big) \qinq \bD^3  \qasq N \to \infty,$$
and
$$\big( \wt{\mathfrak{I}}^N_{0,2}, \wt{E}^N_0, \wt{I}^N_{0,2}, \wt{R}^N_{0,2} \big) \RA 
\big( \hat{\mathfrak{I}}_{0,2},  \hat{E}_0, \hat{I}_{0,2}, \hat{R}_{0,2} \big) \qinq \bD^4  \qasq N \to \infty.$$
It then suffices to show that 
$$
\big( \wt{\mathfrak{I}}^N_{0,1} -\hat{\mathfrak{I}}^N_{0,1}, \wt{\mathfrak{I}}^N_{0,2} -\hat{\mathfrak{I}}^N_{0,2},  \wt{I}^N_{0,1}- \hat{I}^N_{0,1}, \wt{R}^N_{0,1}- \hat{R}^N_{0,1},  \wt{E}^N_{0}- \hat{E}^N_{0},
 \wt{I}^N_{0,2}- \hat{I}^N_{0,2}, \wt{R}^N_{0,2}- \hat{R}^N_{0,2},  \big)  \to 0
$$ in probability in $\bD^7$,
as $N \to \infty$. 
The convergence for $\big( \wt{\mathfrak{I}}^N_{0,1} -\hat{\mathfrak{I}}^N_{0,1}, \wt{\mathfrak{I}}^N_{0,2} -\hat{\mathfrak{I}}^N_{0,2}\big) \to 0$ in probability in $\bD^2$   is shown in \eqref{eqn-hatfrakI-0-diff-conv}. For the other process, the convergence follows similar arguments as already used above.  See also the proofs of Lemmas 6.1 and 8.1 in \cite{PP-2020}. 
This completes the proof.
\end{proof}

\begin{remark}
We remark that 
the limit $(\hat{I}_{0,1},\hat{R}_{0,1})$
  can be written as 
\begin{align} \label{eqn-hatI-01-W01}
\hat{I}_{0,1}(t) =W_{0,1}([t,\infty)),  \quad \hat{R}_{0,1}(t) = W_{0,1}([0,t) )
\end{align} 
where $W_{0,1}$ is a Gaussian white noise on $\RR_+$ satisfying 
\begin{align*}
\E[W_{0,1}([s,t))^2] = \bar{I}(0)   \big( F_{0,I}(t) -F_{0,I}(s) \big) (1-\big( F_{0,I}(t) -F_{0,I}(s) \big)\, 
\end{align*}
for $0\le s \le t$. 
The limits  $(\hat{E}_0, \hat{I}_{0,2}, \hat{R}_{0,2})$ 
can be written as 
\begin{align}\label{eqn-hatI-02-W02}
\hat{E}_0(t) &= W_{0,2}([t,\infty]\times [0,\infty)  ), \,\,\,
\hat{I}_{0,2} (t) = W_{0,2}([0,t]\times [t,\infty) ), \,\,\,
\hat{R}_{0,2}(t) = W_{0,2}([0,t]\times [0,t))
\end{align}
where   $W_{0,2}$ is a Gaussian white noise on $\RR_+^2$, independent of $W_{0,1}$, such that 
\begin{align*}
\E[W_{0,2}([s,t)\times [s',t'))^2] &= \bar{E}(0) \int_{s}^t  \big(F_0(t'-y|y) - F_0(s'-y|y) \big) G_0(dy) \\
& \qquad \times \left( 1-  \int_{s}^t  \big(F_0(t'-y|y) - F_0(s'-y|y) \big) G_0(dy) \right) 
\end{align*}
for $0 \le s \le t$ and $0 \le s' \le t'$. 
\end{remark}

\begin{lemma} \label{lem-3.12}
Under Assumptions \ref{AS-lambda} and  \ref{AS-FCLT},   
\begin{align*}
\big(\hat{E}^N_1,\hat{I}^N_1, \hat{R}^N_1\big) \RA \big( \hat{E}_1, \hat{I}_1, \hat{R}_1\big) \qinq \bD^3 \qasq N \to \infty,
\end{align*}
jointly with the convergence of $ \big(\hat{\mathfrak{I}}^N_1, \hat{\mathfrak{I}}^N_2\big)  \RA \big(\hat{\mathfrak{I}}_1, \hat{\mathfrak{I}}_2\big)$,
where $\big(\hat{\mathfrak{I}}_1, \hat{\mathfrak{I}}_2\big)$ is given in Lemma \ref{lem-mfI12-conv}. 
$(\hat{\mathfrak{I}}_1,\hat{\mathfrak{I}}_2,\hat{E}_1, \hat{I}_1,\hat{R}_1)$  is a four-dimensional centered Gaussian process, independent of $(\hat{E}(0),\hat{I}(0))$, $( \hat{\mathfrak{I}}_{0,1},\hat{I}_{0,1},\hat{R}_{0,1})$ and 
$(\hat{\mathfrak{I}}_{0,2},\hat{E}_0, \hat{I}_{0,2}, \hat{R}_{0,2})$, 
 with the covariance functions: for  $t, t'\ge 0$, 
  \begin{align*}
\Cov(\hat{\mathfrak{I}}_{1}(t),  \hat{E}_1(t')) &= \int_0^{t \wedge t'}  \Big( \E \big[\lambda(t-s) \bone_{\zeta>t'-s}\big] - \bar\lambda(t-s) G^c(t'-s) \Big) \bar{S}(s) \bar{\mathfrak{I}}(s) ds\,, \\
\Cov(\hat{\mathfrak{I}}_{1}(t),   \hat{I}_1(t')) &=  \int_0^{t \wedge t'}  \Big( \E \big[\lambda(t-s) \bone_{\zeta\le t'-s<\zeta+\eta}\big] - \bar\lambda(t-s) \Psi(t'-s) \Big) \bar{S}(s) \bar{\mathfrak{I}}(s) ds\,, \\
\Cov(\hat{\mathfrak{I}}_{1}(t), \hat{R}_1(t')) &= \int_0^{t \wedge t'}  \Big( \E \big[\lambda(t-s) \bone_{\zeta+\eta\le t' -s}\big] - \bar\lambda(t-s) \Phi(t'-s) \Big) \bar{S}(s) \bar{\mathfrak{I}}(s) ds\,,
\end{align*}
  \begin{align*}
\Cov(\hat{\mathfrak{I}}_{2}(t),  \hat{E}_1(t')) &=  \int_0^{t \wedge t'}  \bar\lambda(t-s) G^c(t'-s) \bar{S}(s) \bar{\mathfrak{I}}(s) ds\,, \\
\Cov(\hat{\mathfrak{I}}_{2}(t),   \hat{I}_1(t')) &= \int_0^{t \wedge t'}  \bar\lambda(t-s) \Psi(t'-s)  \bar{S}(s) \bar{\mathfrak{I}}(s) ds\,, \\
\Cov(\hat{\mathfrak{I}}_{2}(t), \hat{R}_1(t')) &=  \int_0^{t \wedge t'}  \bar\lambda(t-s) \Phi(t'-s)  \bar{S}(s) \bar{\mathfrak{I}}(s) ds\,,
\end{align*}
and 
\begin{align}
\Cov(\hat{E}_1(t), \hat{E}_1(t')) & =  \int_0^{t\wedge t'} G^c(t\vee t'-s) \bar{S}(s) \bar{\mathfrak{I}}(s) ds, \non\\
\Cov(\hat{I}_1(t), \hat{I}_1(t')) & =  \int_0^{t\wedge t'}\int_0^{t\wedge t'-s} F^c(t\vee t'-s-u|u)dG(u)  \bar{S}(s) \bar{\mathfrak{I}}(s) ds,  \non\\
\Cov(\hat{R}_1(t), \hat{R}_1(t')) & =  \int_0^{t\wedge t'} \Phi(t\wedge t'-s) \bar{S}(s) \bar{\mathfrak{I}}(s) ds, \non\\
\Cov(\hat{E}_1(t), \hat{I}_1(t')) & = \bone(t'\ge t)  \int_0^{t\wedge t'}  \int_{t-s}^{t'-s} F^c(t'-s-u|u) d G(u)  \bar{S}(s) \bar{\mathfrak{I}}(s) ds,\non\\
\Cov(\hat{E}_1(t), \hat{R}_1(t')) & = \bone(t'\ge t)  \int_0^{t\wedge t'}  \int_{t-s}^{t'-s} F(t'-s-u|u) d G(u) \bar{S}(s) \bar{\mathfrak{I}}(s)) ds,\non\\
\Cov(\hat{I}_1(t), \hat{R}_1(t')) & =  \bone(t'\ge t)  \int_0^{t\wedge t'} \int_0^{t-s}  (F(t'-s-u|u) - F(t-s-u|u)) d G(u) \bar{S}(s) \bar{\mathfrak{I}}(s) ds. \non 
\end{align}
 
\end{lemma}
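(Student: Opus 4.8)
The plan is to realize all five processes $\hat{\mathfrak{I}}^N_1,\hat{\mathfrak{I}}^N_2,\hat E^N_1,\hat I^N_1,\hat R^N_1$ as stochastic integrals against the \emph{single} compensated PRM $\wt Q(ds,d\lambda,du)$ on $\R_+\times\bD\times\R_+$ introduced in Section \ref{sec-moment}, exploiting that the durations $\zeta(\lambda)=\inf\{r:\lambda(r)>0\}$ and $\chi(\lambda)=\zeta(\lambda)+\eta(\lambda)$ are deterministic functionals of the mark $\lambda$. Using the thinning indicator $\bone_{u\le\Upsilon^N(s^-)}$ and the compensation identity, one has, for instance,
\begin{align*}
\hat E^N_1(t)&=\frac{1}{\sqrt N}\int_0^t\int_\bD\int_0^\infty \bone_{\zeta(\lambda)>t-s}\,\bone_{u\le\Upsilon^N(s^-)}\,\wt Q(ds,d\lambda,du),\\
\hat I^N_1(t)&=\frac{1}{\sqrt N}\int_0^t\int_\bD\int_0^\infty \bone_{\zeta(\lambda)\le t-s<\chi(\lambda)}\,\bone_{u\le\Upsilon^N(s^-)}\,\wt Q(ds,d\lambda,du),
\end{align*}
and similarly $\hat R^N_1$ carries the integrand $\bone_{\chi(\lambda)\le t-s}$, while $\hat{\mathfrak{I}}^N_1$ carries $\breve\lambda(t-s)$ and $\hat{\mathfrak{I}}^N_2$ carries $\bar\lambda(t-s)$ against the same $\wt Q$. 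As in Sections \ref{sec-moment}--\ref{sec-mfI-1-proof}, I would first replace $\Upsilon^N(s^-)$ by the deterministic $N\bar\Upsilon(s)$, producing auxiliary processes, prove the joint statement for them, and then transfer it back.

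For the auxiliary processes the convergence of the finite dimensional distributions follows from a single application of Lemma \ref{lem:expmoment}. For fixed times and real coefficients $\theta$ I would take $f$ to be the $N^{-1/2}$-scaled, $i\theta$-weighted linear combination of the five integrands evaluated at those times; since $|f|=O(N^{-1/2})$, the expansion $e^f-1-f=\tfrac12 f^2+O(|f|^3)$ integrated against the mean measure $ds\times\mathrm{Law}(\lambda)\times du$ (whose $u$-range has height $N\bar\Upsilon(s)$) turns the joint characteristic function into the exponential of a quadratic form with entries
\[
\int_0^{t\wedge t'}\E\big[g_1(\lambda;t-s)\,g_2(\lambda;t'-s)\big]\,\bar\Upsilon(s)\,ds,
\]
the cubic remainder being $O(N^{-1/2})$; here the expectation is over the law of $\lambda$, the $g$'s range over $\{\breve\lambda,\bar\lambda,\bone_{\zeta>\cdot},\bone_{\zeta\le\cdot<\chi},\bone_{\chi\le\cdot}\}$, and the $\wt Q$-compensation makes each coordinate mean zero without the indicator integrands having to be recentered. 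Matching these against the stated covariances is a direct check: e.g. $\Cov(\hat{\mathfrak{I}}_1(t),\hat E_1(t'))=\int_0^{t\wedge t'}\E[\breve\lambda(t-s)\bone_{\zeta>t'-s}]\bar\Upsilon(s)ds=\int_0^{t\wedge t'}\big(\E[\lambda(t-s)\bone_{\zeta>t'-s}]-\bar\lambda(t-s)G^c(t'-s)\big)\bar\Upsilon(s)ds$, whereas the nonzero coupling $\Cov(\hat{\mathfrak{I}}_2(t),\hat E_1(t'))=\int_0^{t\wedge t'}\bar\lambda(t-s)G^c(t'-s)\bar\Upsilon(s)ds$ comes from the uncentered $\bar\lambda$, and $\hat{\mathfrak{I}}_1\perp\hat{\mathfrak{I}}_2$ is recovered from $\E[\breve\lambda\,\bar\lambda]=\bar\lambda\,\E[\breve\lambda]=0$. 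Independence from $(\hat E(0),\hat I(0))$ and from the initial-condition processes of Lemma \ref{lem-3.11} is inherited from the independence of the mark family $\{\lambda_i\}$ from the initially infected sources.

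Tightness of the compartment coordinates $\hat E^N_1,\hat I^N_1,\hat R^N_1$ is not the novel part: these are the empirical fluctuation processes indexed by $(\tau^N_i,\zeta_i,\eta_i)$ as in the standard SEIR model, so tightness follows as in the proof of Theorem 3.2 of \cite{PP-2020} (via Lemma \ref{Lem-20} together with the moment bound on $\hat\Upsilon^N$ in Lemma \ref{lem-hatPhi-2-boundedness}), and tightness of $(\hat{\mathfrak{I}}^N_1,\hat{\mathfrak{I}}^N_2)$ is already available from Lemma \ref{lem-mfI12-conv}; joint tightness in $\bD^5$ is then immediate. The asymptotic equivalence $\hat E^N_1-\wt E^N_1\to0$, and likewise for the $I$ and $R$ coordinates, is handled exactly as in Lemmas \ref{lem-frakI1-diff-conv}--\ref{lem-frakI2-diff-conv}: each difference is dominated by a PRM integral over the random strip between $N\bar\Upsilon^N(s)\wedge N\bar\Upsilon(s)$ and $N\bar\Upsilon^N(s)\vee N\bar\Upsilon(s)$ with bounded integrand, monotone in $t$, whose supremum is controlled through $\int_0^T\E|\bar\Upsilon^N-\bar\Upsilon|\,ds\to0$ and $\sup_N\E[\sup_{t\le T}|\hat\Upsilon^N(t)|^2]<\infty$. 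Combining the fdd limit, joint tightness, and asymptotic equivalence yields the claim.

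I expect the genuine difficulty to lie in the finite dimensional computation through the \emph{common} PRM rather than in any single coordinate: one must keep the mark $\lambda$ and its induced durations $(\zeta,\eta)$ coupled inside one mean measure so that all cross-covariances between the infectivity and the compartment coordinates emerge simultaneously from \eqref{mixed2moment} and Lemma \ref{lem:expmoment}, and then verify term by term that the quadratic form reproduces every entry of the statement. The structural subtlety is that $\hat{\mathfrak{I}}^N_1$ shares the mark $\lambda$ with $\hat E^N_1,\hat I^N_1,\hat R^N_1$ whereas $\hat{\mathfrak{I}}^N_2$ does not fluctuate in $\lambda$, which is precisely what produces the observed block pattern: independence of $\hat{\mathfrak{I}}_1$ and $\hat{\mathfrak{I}}_2$, but nontrivial coupling of each with the three compartment limits.
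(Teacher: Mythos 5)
Your proposal is correct and follows essentially the same route as the paper: a common PRM on $\R_+\times\bD\times\R_+$ carrying the mark $\lambda$ (the paper passes through an image PRM $Q_1$ on $\R_+^4$ but immediately notes its integrals are integrals against $\wt Q$, so your direct use of the functionals $\zeta(\lambda),\chi(\lambda)$ as integrands is the same device), auxiliary processes with intensity $N\bar\Upsilon$, joint fdd convergence via the characteristic functional of Lemma \ref{lem:expmoment}, tightness from the martingale/monotone structure, and asymptotic equivalence by dominating the difference with a PRM integral over the strip between $N(\bar\Upsilon^N\wedge\bar\Upsilon)$ and $N(\bar\Upsilon^N\vee\bar\Upsilon)$ controlled by Lemma \ref{lem-hatPhi-2-boundedness}. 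The covariance identifications, including the block structure ($\hat{\mathfrak{I}}_1\perp\hat{\mathfrak{I}}_2$ but both coupled to the compartment limits), match the paper's computation.
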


\begin{proof}
The convergence of $\big(\hat{E}^N_1,\hat{I}^N_1, \hat{R}^N_1\big) $ follows from the same argument as in the proofs of Lemmas 8.3 and 8.4 in \cite{PP-2020}. Let us sketch the argument, and in particular the coupling between $\big(\hat{E}^N_1,\hat{I}^N_1, \hat{R}^N_1\big) $ and $\big(\hat{\mathfrak{I}}^N_1, \hat{\mathfrak{I}}^N_2\big)$, referring the reader to \cite{PP-2020} for the technical details.
We have shown the joint convergence of $\big(\hat{\mathfrak{I}}^N_1, \hat{\mathfrak{I}}^N_2\big)$ in \eqref{eqn-hatMA-frakI-joint-conv} using the processes $\big(\wt{\mathfrak{I}}^N_1, \wt{\mathfrak{I}}^N_2\big)$ which are defined via the PRM $\breve{Q}(ds,d\lambda,du)$. 
Recall the definition of the pair $(\zeta,\eta)$ as a function of $\lambda$ in \eqref{eqn-lambda-eta}. This defines a map $\phi:\bD\mapsto\R_+^2$ such that $(\zeta,\eta)=\phi(\lambda)$. Recall that $\zeta$ is the duration of the exposed period, and $\eta$ the duration of the infectious period. We now define $\Lambda:\R_+\times\bD\times\R_+\mapsto\R_+^4$ by
\[ \Lambda(s,\lambda,u)=(s,\phi^{(s)}(\lambda),u),\ \text{where }\phi^{(s)}(\lambda)=(s+\phi_1(\lambda),s+\phi_1(\lambda)+\phi_2(\lambda))\,.\]
The intuition behind this definition is that if $s$ is the time of infection,  $\phi^{(s)}_1$ stands for the end of the exposed period (transition from exposed to infectious), and $\phi^{(s)}_2$ the end of the infectious period (transition from infectious to recovered). We finally define a PRM $Q_1$ on $\R_+^4$, which is the image of $\breve{Q}$ by the mapping $\Lambda$, i.e., for any Borel subset $A\subset\R_+^4$,
\[   Q_1(A)=\breve{Q}(\Lambda^{-1}(A))\,.\]
 Let $\wt{Q}_1$ denote its associated compensated measure. We have
\begin{align*}
\hat{E}^N_1(t)&=\frac{1}{\sqrt{N}}\int_0^t\int_t^\infty\int_0^\infty\int_0^\infty \bone_{u\le\Upsilon^N(s^-)}\wt{Q}_1(ds,dy,dz,du),\\
\hat{I}^N_1(t)&=\frac{1}{\sqrt{N}}\int_0^t\int_0^t\int_t^\infty\int_0^\infty \bone_{u\le\Upsilon^N(s^-)}\wt{Q}_1(ds,dy,dz,du),\\
\hat{R}^N_1(t)&=\frac{1}{\sqrt{N}}\int_0^t\int_0^t\int_0^t\int_0^\infty \bone_{u\le\Upsilon^N(s^-)}\wt{Q}_1(ds,dy,dz,du)\,.
\end{align*}
Recall that
\begin{align*}
\hat{M}^N_A(t)=\frac{1}{\sqrt{N}}\int_0^t\int_0^\infty\int_0^\infty\int_0^\infty\bone_{u\le\Upsilon^N(s^-)}\wt{Q}_1(ds,dy,dz,du),
\end{align*}
and define the auxiliary process
\begin{align*}
\hat{L}^N_1(t)&=\frac{1}{\sqrt{N}}\int_0^t\int_0^t\int_0^\infty\int_0^\infty \bone_{u\le\Upsilon^N(s^-)}\wt{Q}_1(ds,dy,dz,du)\,.
\end{align*}

We define $\big(\wt{E}^N_1,\wt{I}^N_1, \wt{R}^N_1,\wt{M}^N_A,\wt{L}^N_1\big) $ by replacing $\bone_{u\le\Upsilon^N(s^-)}$ by $\bone_{u\le N\bar{\Upsilon}(s^-)}$ in the above integrals.  It is not hard to show that the three processes $\wt{M}^N_A(t)$, $\wt{L}^N_1(t)$ and $\wt{R}^N_1(t)$ are martingales (with respect to three different filtrations), whose tightness is easy to establish. Moreover, 
\begin{align} \label{eqn-wt-EI-diff-rep}
\wt{E}^N_1(t)=\wt{M}^N_A(t)-\wt{L}^N_1(t), \quad \wt{I}^N_1(t)=\wt{L}^N_1(t)-\wt{R}^N_1(t).
\end{align} Hence the tightness of $\wt{E}^N_1$ and $\wt{I}^N_1$.
In order to establish the joint convergence $\big(\wt{\mathfrak{I}}^N_1, \wt{\mathfrak{I}}^N_2, \wt{E}^N_1,\wt{I}^N_1, \wt{R}^N_1\big) \RA \big(\hat{\mathfrak{I}}_1, \hat{\mathfrak{I}}_2, \hat{E}_1,\hat{I}_1, \hat{R}_1\big)$  in $\bD^5$ as $N\to \infty$, it remains to prove the joint convergence of the finite-dimensional distributions, that is,
for any $k\ge1$, $0<t_1<t_2<\cdots<t_k$,  as $N\to\infty$, $
 \big((\wt{\mathfrak{I}}^N_1(t_1),\wt{\mathfrak{I}}^N_2(t_1), \wt{E}^N_1(t_1),\wt{I}^N_1(t_1), \wt{R}^N_1(t_1)),\ldots,(\wt{\mathfrak{I}}^N_1(t_k),\wt{\mathfrak{I}}^N_2(t_k), \wt{E}^N_1(t_k),\wt{I}^N_1(t_k), \wt{R}^N_1(t_k))\big)$ converges  to $
 \big((\hat{\mathfrak{I}}_1(t_1),\hat{\mathfrak{I}}_2(t_1), \hat{E}_1(t_1),\hat{I}_1(t_1), \hat{R}_1(t_1)),\ldots,(\hat{\mathfrak{I}}_1(t_k),\hat{\mathfrak{I}}_2(t_k), \hat{E}_1(t_k),\hat{I}_1(t_k), \hat{R}_1(t_k)) \big)$ in distribution in $\R^{5k}$. Noting that an integral w.r.t. the compensated measure $\wt{Q}_1$ can be considered in fact as an integral w.r.t. the compensated measure $\wt{Q}$, it is clear that the computation done in the proof of Lemma 
 \ref{lem-hatmfI1-2} can be extended to this situation, yielding the announced result.  
 We just show one piece of this computation, namely, 
 for $\vartheta, \theta, \vartheta', \theta' \in \R$ and $t,t'>0$, 
we get
\begin{align*}
& \lim_{N\to\infty}  \E\left[\exp\left(i\vartheta\wt{\mathfrak{I}}^N_1(t)+i\theta\hat{I}^N_1(t)+i\vartheta'\wt{\mathfrak{I}}^N_1(t')+i\theta'\hat{I}^N_1(t')\right)\right] \\
& = \exp\bigg(-\frac{\vartheta^2}{2}\int_0^{t}\E\big[ \breve\lambda^2(t-s)   \big]\bar{\Upsilon}(s)ds - \frac{\theta^2}{2}\int_0^{t} \Psi(t-s) \bar{\Upsilon}(s)ds \\
& \qquad \qquad  -\frac{(\vartheta')^2}{2}\int_0^{t'}\E\big[ \breve\lambda^2(t'-s)   \big]\bar{\Upsilon}(s)ds - \frac{(\theta')^2}{2}\int_0^{t'} \Psi(t'-s) \bar{\Upsilon}(s)ds  \\
& \qquad \qquad - \vartheta\theta  \int_0^{t}  \Big( \E \big[\lambda(t-s) \bone_{\zeta\le t-s<\zeta+\eta}\big] - \bar\lambda(t-s) \Psi(t-s) \Big) \bar{\Upsilon}(s)ds  \\
&\qquad \qquad - \vartheta\vartheta'\int_0^{t\wedge t'}\E\big[ \breve\lambda(t-s)  \breve\lambda(t'-s) \big]\bar{\Upsilon}(s)ds  \\
& \qquad \qquad -\theta \theta'   \int_0^{t\wedge t'}\int_0^{t\wedge t'-s} F^c(t\vee t'-s-u|u)G(du)  \bar{S}(s) \bar{\mathfrak{I}}(s) ds   \non\\
& \qquad \qquad - \vartheta\theta'  \int_0^{t \wedge t'}  \Big( \E \big[\lambda(t-s) \bone_{\zeta\le t'-s<\zeta+\eta}\big] - \bar\lambda(t-s) \Psi(t'-s) \Big) \bar{\Upsilon}(s)ds  \\
& \qquad \qquad - \vartheta' \theta  \int_0^{t \wedge t'}  \Big( \E \big[\lambda(t'-s) \bone_{\zeta\le t-s<\zeta+\eta}\big] - \bar\lambda(t'-s) \Psi(t-s) \Big)\bar{\Upsilon}(s)ds \\
& \qquad \qquad - \vartheta'\theta'  \int_0^{t'}  \Big( \E \big[\lambda(t'-s) \bone_{\zeta\le t'-s<\zeta+\eta}\big] - \bar\lambda(t'-s) \Psi(t'-s) \Big) \bar{\Upsilon}(s)ds  \bigg)
\end{align*}

Finally, we need to show that  $\big(\hat{E}^N_1-\wt{E}^N_1,\hat{I}^N_1-\wt{I}^N_1, \hat{R}^N_1-\wt{R}^N_1\big) \to0$ in $\bD^3$ in probability. 
Since $\hat{E}^N_1(t)=\hat{M}^N_A(t)-\hat{L}^N_1(t)$ and $\hat{I}^N_1(t)=\hat{L}^N_1(t)-\hat{R}^N_1(t)$, given \eqref{eqn-wt-EI-diff-rep},  it suffices to show that  $\big(\hat{M}^N_A-\wt{M}^N_A,\hat{L}^N_1-\wt{L}^N_1, \hat{R}^N_1-\wt{R}^N_1\big) \to0$ in $\bD^3$ in probability. 
We will show that $\hat{R}^N_1-\wt{R}^N_1\to 0$ and the other two follow from similar arguments. 
It is clear that
\begin{align*}
\E \big[\big|\hat{R}^N_1(t)-\wt{R}^N_1(t)\big|^2\big] = \int_0^t \Phi(t-s) \E\big[\big|\bar\Upsilon^N(s^-) - \bar\Upsilon(s)\big|\big] ds \to 0 \qasq N \to \infty. 
\end{align*}
We have
\begin{align*}
\big|\hat{R}^N_1(t)-\wt{R}^N_1(t) \big|&\le \frac{1}{\sqrt{N}}\int_0^t\int_0^t\int_0^t\int_{N (\bar\Upsilon^N(s^-) \wedge \bar\Upsilon(s))}^{N (\bar\Upsilon^N(s^-) \vee \bar\Upsilon(s))} Q_1(ds,dy,dz,du)  +  \int_0^t \Phi(t-s) |\hat\Upsilon^N(s)|ds\,. 
\end{align*} 
Denote these two terms as $\mathfrak{R}_1^N(t)$ and $\mathfrak{R}_2^N(t)$. We show tightness of these two processes. Observe that both are increasing in $t$. 
 Thus, we only need to verify the following (see the Corollary on page 83 in \cite{billingsley1999convergence}): 
for any $\ep>0$, and $i=1,2$, 
\begin{equation}\label{mfR-n-tight}
\limsup_{N\to\infty} \frac{1}{\delta}\P\big( \big|\mathfrak{R}_i^N(t+\delta) - \mathfrak{R}_i^N(t) \big| \ge \ep \big) \to 0\qasq \delta \to 0.
\end{equation}
By some direct calculations (see also the similar derivations in the proofs of Lemmas 6.3 and 8.3 in \cite{PP-2020} for the models with constant infectivity rate), this condition for both processes $\mathfrak{R}_1^N(t)$ and $\mathfrak{R}_2^N(t)$  reduces to show that 
\begin{equation}\label{mfR-n-tight-p1}
\limsup_{N\to\infty} \frac{1}{\delta}  \E\left[ \left(\int_t^{t+\delta}  
\Phi(t+\delta -s)|\hat\Upsilon^N(s)|  ds \right)^2 \right] \to 0  
\end{equation}
and 
\begin{equation}\label{mfR-n-tight-p2}
\limsup_{N\to\infty} \frac{1}{\delta}  \E\left[ \left(\int_0^t 
(\Phi(t+\delta -s) - \Phi(t-s))|\hat\Upsilon^N(s)|  ds \right)^2 \right] \to 0  
\end{equation}
as $\delta \to 0$.  It is clear the expectation in \eqref{mfR-n-tight-p1} is bounded by $\delta^2 \sup_{0\le s\le T} \E\big[ |\hat\Upsilon^N(s)|^2\big]$, so the claim follows by \eqref{eqn-hatPhi-2-boundedness-sup}. 
For \eqref{mfR-n-tight-p2}, we first observe that
\begin{align}
 & \Phi(t+\delta -s) - \Phi(t-s)  \non\\
 &= \int_0^{t+\delta-s} F(t+\delta-s-u|u)G(du)  -  \int_0^{t-s} F(t-s-u|u)G(du)\non\\
& = \int_{t-s}^{t+\delta-s} F(t+\delta-s-u|u)G(du)  +  \int_0^{t-s} (F(t+\delta-s-u|u)- F(t-s-u|u))G(du). \non
\end{align}
Thus, we have
\begin{align}
&\E\left[ \left(\int_0^t 
(\Phi(t+\delta -s) - \Phi(t-s))  |\hat\Upsilon^N(s)| ds \right)^2 \right] \non\\
& \le 2 \E\bigg[ \sup_{0\le s\le T}|\hat\Upsilon^N(s)|^2\bigg] \Bigg(   \left(\int_0^t 
(G(t+\delta-s) - G(t-s))  ds \right)^2 \non\\
& \qquad \qquad +  \left(\int_0^t 
\int_0^{t-s} (F(t+\delta-s-u|u)- F(t-s-u|u))dG(u) ds \right)^2 \Bigg) \non \\
& \le 2 \delta^2  \E\bigg[ \sup_{0\le s\le T}|\hat\Upsilon^N(s)|^2\bigg], \non 
\end{align}  
where the last inequality follows from applying \eqref{estim-int-F} (noting that the second integral requires using interchange of the order of integration first).  Thus by  \eqref{eqn-hatPhi-2-boundedness-sup-inside}, we obtain \eqref{mfR-n-tight-p2}. This completes the proof.
 \end{proof}

\begin{remark}
We remark that
the limit
$ (\hat{E}_1, \hat{I}_1,\hat{R}_1)$
 can be written as
\begin{align} \label{eqn-hatI-1-WH}
\hat{E}_1(t) &= W_1([0,t]\times[t,\infty)\times [0,\infty)), \non\\
 \hat{I}_1(t) &= W_1([0,t]\times[0,t)\times[t,\infty)), \non\\
\hat{R}_1(t) &= W_1([0,t]\times[0,t)\times [0,t)),   
\end{align}
where  $W_1$ is
a Gaussian white noise on $\RR_+^3$, independent of the pair $(W_{0,1},W_{0,2})$,  such that 
\begin{align*}
& \E\left[ W_1([r,t)\times [a,b)\times [a',b'))^2\right] \non\\
& =   \int_r^t \left( \int_{a-s}^{b-s} (F(b'-y-s|y) - F(a'-y-s|y)) G(dy) \right)  \bar{S}(s) \bar{\mathfrak{I}}(s) ds,
\end{align*}
for $0 \le r \le t$,  $0 \le a \le b$ and $0 \le a' \le b'$. 
\end{remark}

{\bf Completing the proof of Theorem \ref{thm-FCLT}.} 
The representations of $E^N(t)$, $I^N(t)$ and $R^N(t)$ in \eqref{eqn-hatE-rep-SEIR}, \eqref{eqn-hatI-rep-SEIR} and \eqref{eqn-hatR-rep-SEIR},  give a natural integral mapping from $(\hat{E}^N(0), \hat{I}^N(0))$,  $\big( \hat{E}^N_0, \hat{I}^N_{0,1},  \hat{I}^N_{0,2}, \hat{R}^N_{0,1},  \hat{R}^N_{0,2}\big)$, $\big(\hat{E}^N_1,\hat{I}^N_1, \hat{R}^N_1\big)$, $\big(\hat{S}^N, \hat{\mathfrak{J}}^N\big)$  and the fixed functions $G_0(t), F_{0,I}(t), \Psi_0(t)$, 
$\bar{\lambda}^0(t)$, $\bar{\lambda}^{0,I}(t)$,  $\bar{\lambda}(t)$, $\bar{S}(t)$ and $\bar{\mathfrak{I}}(t)$ to the processes 
$ \big(\hat{E}^N, \hat{I}^N,\hat{R}^N\big)$. The mapping is continuous in the Skorohod $J_1$ topology (by a slight modification of Lemma 8.1 in \cite{PP-2020}). We can then apply the continuous mapping theorem to conclude the convergence $\big(\hat{E}^N, \hat{I}^N, \hat{R}^N\big) \RA \big(\hat{E}, \hat{I}, \hat{R}\big)$ in $\bD^3$.  Given their joint convergence with $\big( \hat{\mathfrak{I}}^N_{0,1},  \hat{\mathfrak{I}}^N_{0,2} \big)$ in Lemma \ref{lem-3.11} and  $ (\hat{\mathfrak{I}}^N_1,\hat{\mathfrak{I}}^N_2)$ in Lemma  \ref{lem-3.12}, we can also conclude the joint convergence of the processes $\big(\hat{E}^N, \hat{I}^N, \hat{R}^N\big)$ with $(\hat{S}^N, \hat{\mathfrak{I}}^N)$.

Finally, for the covariance functions of $(\hat{W}_S,\hat{W}_{\mathfrak{I}},\hat{W}_E, \hat{W}_I, \hat{W}_R)$, 
we have observed in the previous section from  \eqref{eqn-hatfrakI-rep} that 
$\hat{W}_{\mathfrak{I}} = \hat{\mathfrak{I}}_{0,1}(t)+ \hat{\mathfrak{I}}_{0,2}(t)+ \hat{\mathfrak{I}}_1(t) +\hat{\mathfrak{I}}_2(t)$, and we also observe 
 from \eqref{eqn-hatE-rep-SEIR}, \eqref{eqn-hatI-rep-SEIR} and \eqref{eqn-hatR-rep-SEIR}
that $\hat{W}_E = \hat{E}_0(t) + \hat{E}_1(t)$, $\hat{W}_I=\hat{I}_{0,1}(t)+  \hat{I}_{0,2}(t)  + \hat{I}_1(t) $, $\hat{W}_R=\hat{R}_{0,1}(t)+  \hat{R}_{0,2}(t)  + \hat{R}_1(t)$. Then the covariance functions can be easily computed from the 
the covariance functions of $( \hat{\mathfrak{I}}_{0,1},\hat{I}_{0,1},\hat{R}_{0,1})$, 
$(\hat{\mathfrak{I}}_{0,2},\hat{E}_0, \hat{I}_{0,2}, \hat{R}_{0,2})$ 
and $(\hat{\mathfrak{I}}_1,\hat{\mathfrak{I}}_2,\hat{E}_1, \hat{I}_1,\hat{R}_1)$  in Lemmas \ref{lem-3.11} and \ref{lem-3.12}. 
This completes the proof of Theorem \ref{thm-FCLT}.

\section{On the generalized SIS and SIRS models with varying infectivity}

In this section, we discuss how the results can be generalized to the SIS and SIRS models. We state the FCLTs for these models without proofs, since they can be done analogously.

\subsection{Generalized SIS models with varying infectivity} \label{sec-SIS}

In the SIS model, individuals become  susceptible immediately after going through the infectious periods. 
Since $S^N(t)=N-I^N(t)$, the epidemic dynamics is determined by the two dimensional processes $\mathfrak{I}^N(t)$ and $I^N(t)$. 
As stated in Remark 2.3 of \cite{FPP2020b}, the FLLN limit $(\bar{\mathfrak{I}}(t), \bar{I}(t))$ is 
determined by the two--dimensional integral equations:
  \begin{align} 
\bar{\mathfrak{I}}(t)&=\bar{I}(0)\bar{\lambda}^0(t)+\int_0^t\bar{\lambda}(t-s)(1-\bar{I}(s))\bar{\mathfrak{I}}(s)ds\,,\label{eqn-barfrakI-SIS}\\
 \bar{I}(t) &= \bar{I}(0) F_{0,I}^c(t)   +\int_0^t F^c(t-s) (1-\bar{I}(s))\bar{\mathfrak{I}}(s)ds\,. \label{eqn-barI-SIS}
 \end{align}
Here the c.d.f.'s $F$  and $F_{0,I}$ denote the distributions of the infectious periods of newly infected individuals and those of initially infectious ones.

\begin{theorem} \label{thm-FCLT-SIS}
In the generalized SIS model, under Assumptions \ref{AS-lambda} and \ref{AS-FCLT} (with $E^N(t)\equiv 0$ and only infectious periods), 
\begin{align*}
\big( \hat{\mathfrak{I}}^N, \hat{I}^N\big) 
\to \big(\hat{\mathfrak{I}}, \hat{I}\big) \qinq \bD^2 \qasq N \to \infty. 
\end{align*}
The limit processes $\hat{\mathfrak{I}}$ and $\hat{I}$ are the unique solution to the following stochastic integral equations: 
\begin{align*}
\hat{\mathfrak{I}}(t) &= \hat{I}(0) \bar{\lambda}^{0,I}(t) +  \hat{W}_{\mathfrak{I}}(t) + \int_0^t \bar{\lambda}(t-s) \hat{\Upsilon}(s) ds, \\
\hat{I}(t) &= \hat{I}(0) F_{0,I}^c(t) +  \hat{W}_{I}(t)   + \int_0^t F^c(t-s)  \hat{\Upsilon}(s) ds, 
\end{align*}
where
\begin{align*}
\hat{\Upsilon}(t)  =  (1-\bar{I}(t))  \hat{\mathfrak{I}}(t) - \bar{\mathfrak{I}}(t) \hat{I}(t) ,
\end{align*}
 and $ \bar{\mathfrak{I}}(t)$ and  $\bar{I}(t)$ are given by the unique solutions to the integral equations  \eqref{eqn-barfrakI-SIS} and \eqref{eqn-barI-SIS}.
 $( \hat{W}_{\mathfrak{I}}, \hat{W}_{I})$
 is a two-dimensional centered Gaussian process, independent of $\hat{I}(0)$, where the covariance functions are as given for the processes  $( \hat{W}_{\mathfrak{I}}, \hat{W}_{I})$ in the SIR model with $\bar{S}=1-\bar{I}$ in the corresponding formulas. 
If $\bar{\lambda}^{0,I}(t)$ and $F_{0,I}$ are continuous, then the limits $\hat{\mathfrak{I}}$ and $\hat{I}$  are continuous.
\end{theorem}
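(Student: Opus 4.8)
The plan is to follow the proof of Theorem \ref{thm-FCLT} for the generalized SEIR model, specialized to the case of no exposed period ($\zeta_i\equiv 0$, so $G$ is concentrated at $0$ and $\Psi$ collapses to $F^c$) and with the balance relation replaced by $S^N(t)=N-I^N(t)$, equivalently $\bar S^N=1-\bar I^N$ and $\hat S^N=-\hat I^N$. First I would record the decompositions analogous to \eqref{eqn-hatfrakI-rep} and \eqref{eqn-hatI-rep-SEIR}. Since the instantaneous infection rate is now $\Upsilon^N(t)=(1-\bar I^N(t))\mathfrak{I}^N(t)$, one obtains
\begin{align*}
\hat{\mathfrak{I}}^N(t) &= \hat{I}^N(0)\bar{\lambda}^{0,I}(t) + \hat{\mathfrak{I}}^N_{0,1}(t) + \hat{\mathfrak{I}}^N_1(t) + \hat{\mathfrak{I}}^N_2(t) + \int_0^t \bar{\lambda}(t-s)\hat{\Upsilon}^N(s)\,ds, \\
\hat{I}^N(t) &= \hat{I}^N(0) F_{0,I}^c(t) + \hat{I}^N_{0,1}(t) + \hat{I}^N_1(t) + \int_0^t F^c(t-s)\hat{\Upsilon}^N(s)\,ds,
\end{align*}
where $\hat{\Upsilon}^N(t)=(1-\bar I(t))\hat{\mathfrak{I}}^N(t)-\bar{\mathfrak{I}}^N(t)\hat I^N(t)$ is the exact analogue of \eqref{eqn-hatPhi-N-rep} under $\hat S^N=-\hat I^N$, and the building blocks $\hat{\mathfrak{I}}^N_{0,1},\hat{\mathfrak{I}}^N_1,\hat{\mathfrak{I}}^N_2,\hat I^N_{0,1},\hat I^N_1$ are exactly those of Sections \ref{sec-Proofs}--\ref{sec-EIR-proof} with $\zeta_i\equiv 0$; all terms carrying $\bar E(0)$, $\hat E^N$, $\hat{\mathfrak{I}}^N_{0,2}$ or $\hat I^N_{0,2}$ drop out.

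Next, I would note that the technical estimates transfer essentially verbatim: Lemma \ref{lem:tightJ0} gives the convergence of $\hat{\mathfrak{I}}^N_{0,1}$; Propositions \ref{prop-wt-mfI-1-inc-moment}--\ref{prop-wt-mfI-2-inc-moment} give the three-point tightness moment bounds for $\wt{\mathfrak{I}}^N_1,\wt{\mathfrak{I}}^N_2$, and Lemmas \ref{lem-mfI12-conv}, \ref{lem-frakI1-diff-conv}, \ref{lem-frakI2-diff-conv} their joint convergence; Lemmas \ref{lem-3.11}--\ref{lem-3.12} give the joint convergence of the compartment pieces $\hat I^N_{0,1}, \hat I^N_1$ with $(\hat{\mathfrak{I}}^N_1,\hat{\mathfrak{I}}^N_2)$, now with $\zeta_i\equiv 0$. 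The finite-dimensional distributions of the driving vector converge to a centered Gaussian law by the characteristic-function computations of Lemmas \ref{lem-hatmfI1-2} and \ref{lem-3.12}; substituting $\bar S=1-\bar I$ into the SIR covariance formulas of Definition \ref{def-W-SIR} then produces precisely the stated covariances of $(\hat W_{\mathfrak{I}},\hat W_I)$.

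The main obstacle is the feedback coupling absent in the SIR/SEIR setting. There the self-contained system is $(\hat S^N,\hat{\mathfrak{I}}^N)$, with $\hat S^N$ driven only by the martingale $\hat M^N_A$ and $\int_0^\cdot\hat{\Upsilon}^N$, recovered individuals leaving the dynamics so that $\hat I^N,\hat R^N$ are read off afterwards. In the SIS model recovered individuals re-enter the susceptible pool, so $\hat I^N$ feeds back into the rate through $\hat{\Upsilon}^N(t)=(1-\bar I(t))\hat{\mathfrak{I}}^N(t)-\bar{\mathfrak{I}}^N(t)\hat I^N(t)$, and the self-contained object is now the coupled pair $(\hat{\mathfrak{I}}^N,\hat I^N)$. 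Consequently the supremum moment bound of Lemma \ref{lem-hatPhi-2-boundedness} must be reproved for this pair jointly: I would control the driving pieces via Theorem \ref{th:momentsup} together with Lemmas \ref{lem-frakI1-diff-conv}--\ref{lem-frakI2-diff-conv}, bound $|\hat{\Upsilon}^N|\le |\hat{\mathfrak{I}}^N|+\lambda^*|\hat I^N|$ using $1-\bar I\le 1$ and $\bar{\mathfrak{I}}^N\le\lambda^*$ (only currently infectious individuals contribute to $\mathfrak{I}^N$), and then close a joint Gronwall argument, which succeeds because the Volterra kernels $\bar\lambda$ and $F^c$ are bounded. This is the only place where the SIS structure forces a genuine, if routine, modification.

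Finally, the limit equations for $(\hat{\mathfrak{I}},\hat I)$ form a two-dimensional linear stochastic Volterra system with bounded kernels, hence are uniquely solvable, and the map from the driving data $(\hat I(0),\hat M_A,\hat{\mathfrak{I}}_{0,1},\hat{\mathfrak{I}}_1,\hat{\mathfrak{I}}_2,\hat I_{0,1},\hat I_1)$ to the solution is continuous in the $J_1$ topology (the analogue of Lemma 8.1 in \cite{PP-2020}). The continuous mapping theorem then yields $(\hat{\mathfrak{I}}^N,\hat I^N)\RA(\hat{\mathfrak{I}},\hat I)$ in $\bD^2$. Continuity of the limits when $\bar{\lambda}^{0,I}$ and $F_{0,I}$ are continuous follows, exactly as at the end of Section \ref{sec-thm21-proof}, from the centered-Gaussian increments and the continuity of the covariance functions.
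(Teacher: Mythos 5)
Your proposal is correct and follows exactly the route the paper intends: Theorem \ref{thm-FCLT-SIS} is stated without proof, with the remark that it follows from the SEIR argument ``with slight modifications,'' and your adaptation is precisely that, correctly identifying the one genuine modification --- that $S^N=N-I^N$ makes $(\hat{\mathfrak{I}}^N,\hat{I}^N)$ the self-contained coupled pair, so the supremum moment bound of Lemma \ref{lem-hatPhi-2-boundedness} and the Gronwall closure must be run jointly on this pair, which works because the kernels $\bar{\lambda}$ and $F^c$ are bounded and $\bar{\mathfrak{I}}^N\le\lambda^*$.
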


\subsection{Generalized SIRS models with varying infectivity} \label{sec-SIRS} 

In the SIRS model, individuals experience the infectious and recovered/immune periods, and then become susceptible.  
We use $I^N(t)$ and $R^N(t)$ to denote the numbers of infectious and recovered/immune individuals, respectively, corresponding to the processes $E^N(t)$ and $I^N(t)$ in the SEIR model. 
Similarly, we use $\{\lambda^0_j\}_{j \ge 1}$ and $\{\lambda_i\}_{i\ge 1}$ to denote the infectivity processes of the initially and newly infectious individuals, respectively, and also use $\{(\xi^0_j,\eta^0_j)\}_{j\ge 1}$ and $\{(\xi_i, \eta_i)\}_{i \ge 1}$  for the infectious and recovered/immune periods for the initially and newly infected individuals, respectively. 
Denote the  remaining immune time of the initially recovered/immune individuals by $\eta^{0,R}_k$ (changing notation $\eta^{0,I}$ to $\eta^{0,R}$ accordingly). Of course,  $\{\lambda^0_j\}_{j \ge 1}$ and $\{\lambda_i\}_{i\ge 1}$ only take positive values over the intervals $[0, \zeta^0_j)$ and $[0, \zeta_i)$, respectively. The definitions of the variables $(\xi_i, \eta_i)$, $(\xi^0_j,\eta^0_j)$ and  $\eta^{0,R}_k$ in \eqref{eqn-lambda-eta}, \eqref{eqn-lambda-eta-0} and \eqref{eqn-lambda-eta-0I} also need to change accordingly in a natural manner. The c.d.f.'s $G_0, G$ denote the distributions of infectious periods of the initially and newly infectious individuals, and the c.d.f.'s $F_{0,R}$ and $F$ denote the distributions of the recovered/immune periods of the initially and newly recovered individuals. Similarly for the notation $\Psi_0,\Psi, \Phi_0, \Phi$. 

Since $S^N(t) = N- I^N(t) - R^N(t)$, the epidemic dynamics is described by the three processes $(\mathfrak{I}^N, I^N, R^N)$. As stated in Remark 2.3 in \cite{FPP2020b}, the FLLN limit $\big(\bar{\mathfrak{I}}, \bar{I}, \bar{R}\big)$ is determined by the three--dimensional integral equations: 
\begin{align} 
\bar{\mathfrak{I}}(t)&=\bar{I}(0)\bar{\lambda}^0(t)+\int_0^t\bar{\lambda}(t-s)\big(1- \bar{I}(s) - \bar{R}(s)\big)\bar{\mathfrak{I}}(s)ds\,,\label{eqn-barfrakI-SIRS}\\
 \bar{I}(t) &=\bar{I}(0)G_0^c(t)+\int_0^tG^c(t-s)\big(1- \bar{I}(s) - \bar{R}(s)\big) \bar{\mathfrak{I}}(s)ds\,, \label{eqn-barI-SIRS}\\
 \bar{R}(t) &= \bar{R}(0) F_{0,R}^c(t) + \bar{I}(0)\Psi_0(t)  +\int_0^t \Psi(t-s) \big(1- \bar{I}(s) - \bar{R}(s)\big) \bar{\mathfrak{I}}(s)ds\,.\label{eqn-barR-SIRS}
\end{align}
We next state the FCLT for these processes.

\begin{theorem}\label{thm-FCLT-SIRS}
In the generalized SIRS model, under Assumptions \ref{AS-lambda} and \ref{AS-FCLT}, 
\begin{align*}
\big( \hat{\mathfrak{I}}^N, \hat{I}^N,  \hat{R}^N\big) 
\RA \big(\hat{\mathfrak{I}}, \hat{I},  \hat{R}\big) \qinq \bD^3 \qasq N \to \infty. 
\end{align*}
The limit process $(\hat{\mathfrak{I}},\hat{I},\hat{R})$ is the unique solution to the following system of stochastic integral equations: 
\begin{align*}
\hat{\mathfrak{I}}(t) &= \hat{I}(0) \bar{\lambda}^{0}(t) +  \hat{W}_{\mathfrak{I}}(t)  + \int_0^t \bar{\lambda}(t-s) \hat{\Upsilon}(s) ds, \\
\hat{I}(t) &= \hat{I}(0) G_{0}^c(t) +  \hat{W}_{I}(t)+ \int_0^t G^c(t-s)  \hat{\Upsilon}(s) ds,\\
\hat{R}(t) &= \hat{R}(0) F_{0,R}^c(t) + \hat{I}(0) \Psi_0(t) +  \hat{W}_{R}(t) + \int_0^t \Psi(t-s)  \hat{\Upsilon}(s) ds, 
\end{align*}
where
\begin{align*}
\hat{\Upsilon}(t)  =  (1-\bar{I}(t) -\bar{R}(t))  \hat{\mathfrak{I}}(t) - \bar{\mathfrak{I}}(t) (\hat{I}(t) + \hat{R}(t)),
\end{align*}
 and $ \bar{\mathfrak{I}}(t)$, $\bar{I}(t)$ and  $\bar{R}(t)$ are given by the unique solution to the integral equations  \eqref{eqn-barfrakI-SIRS}--\eqref{eqn-barR-SIRS}. 
 $(\hat{W}_{\mathfrak{I}}, \hat{W}_{I}, \hat{W}_{R})$ is a three-dimensional centered Gaussian process, independent of $\hat{I}(0)$ and $\hat{R}(0)$, which  has the covariance functions as the processes  $(\hat{W}_{\mathfrak{I}}, \hat{W}_{E}, \hat{W}_{I})$  in the SEIR model with $\bar{S}=1-\bar{I}-\bar{R}$ in the corresponding formulas. 
  If $\bar\lambda^0(t)$, $G_0$ and $F_{0,R}$ are continuous, the limits $\hat{\mathfrak{I}}$, $\hat{I}$ and $\hat{R}$  have continuous paths. 
\end{theorem}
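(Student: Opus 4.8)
The plan is to run the proof of Theorem \ref{thm-FCLT} essentially line by line, under the dictionary that matches the SIRS compartments $(I^N,R^N)$ to the SEIR compartments $(E^N,I^N)$, with the c.d.f.'s $G_0,G$ now standing for the infectious-period distributions and $F_{0,R},F$ (hence $\Psi_0,\Psi,\Phi_0,\Phi$) for the recovered/immune-period distributions, and with the initial-condition Assumption \ref{AS-FCLT} relabeled to govern $(\hat{I}^N(0),\hat{R}^N(0))\RA(\hat{I}(0),\hat{R}(0))$. The one genuinely new structural feature is that susceptibility is regained: here $S^N(t)=N-I^N(t)-R^N(t)$, so that $\hat{S}^N=-\hat{I}^N-\hat{R}^N$ and the diffusion-scaled infection-rate fluctuation becomes $\hat{\Upsilon}^N=\hat{S}^N\bar{\mathfrak{I}}^N+\bar{S}\hat{\mathfrak{I}}^N=-(\hat{I}^N+\hat{R}^N)\bar{\mathfrak{I}}^N+\bar{S}\hat{\mathfrak{I}}^N$, which converges to the $\hat{\Upsilon}$ displayed in the theorem. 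The rest of the driving noise is identical to the SEIR case.

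First I would record the representation of $\hat{\mathfrak{I}}^N$ exactly as in \eqref{eqn-hatfrakI-rep}, now carrying a single initial sum over the $I^N(0)$ initially infectious individuals (no exposed term), together with the analogues of \eqref{eqn-hatE-rep-SEIR}--\eqref{eqn-hatR-rep-SEIR} for $\hat{I}^N$ and $\hat{R}^N$. The entire analysis of the aggregate infectivity fluctuation then carries over verbatim: the decomposition into an initial piece $\hat{\mathfrak{I}}^N_0$, $\hat{\mathfrak{I}}^N_1$ and $\hat{\mathfrak{I}}^N_2$, the PRM representations, the increment moment bounds in Propositions \ref{prop-wt-mfI-1-inc-moment} and \ref{prop-wt-mfI-2-inc-moment}, and the supremum-moment estimate of Lemma \ref{lem-hatPhi-2-boundedness} all rest only on Assumption \ref{AS-lambda} and on the uniform bound $\bar{\Upsilon}\le\lambda^*$, which still holds since $0\le\bar{S}\le1$. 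Consequently Lemmas \ref{lem:tightJ0} and \ref{lem-mfI12-conv} hold unchanged, and the joint convergence of $(\hat{M}^N_A,\hat{\mathfrak{I}}^N_0,\hat{\mathfrak{I}}^N_1,\hat{\mathfrak{I}}^N_2)$ with the counting-process pieces $(\hat{I}^N_1,\hat{R}^N_1)$ follows from the common-PRM characteristic-function computation of Lemma \ref{lem-3.12} (with the initial-condition pieces handled as in Lemma \ref{lem-3.11}), after the obvious relabeling of periods.

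The final step is to identify the limit as the unique solution of the fluctuation analogue of the coupled system \eqref{eqn-barfrakI-SIRS}--\eqref{eqn-barR-SIRS} and to invoke the continuous mapping theorem. Here I would observe that the three limiting equations for $\hat{\mathfrak{I}}$, $\hat{I}$ and $\hat{R}$ are each linear Volterra functionals of the single process $\hat{\Upsilon}$, while $\hat{\Upsilon}=(1-\bar{I}-\bar{R})\hat{\mathfrak{I}}-\bar{\mathfrak{I}}(\hat{I}+\hat{R})$; substituting collapses the system to one scalar linear Volterra equation for $\hat{\Upsilon}$ with a bounded kernel assembled from $\bar{\lambda}$, $G^c$ and $\Psi$ weighted by $(1-\bar{I}-\bar{R})$ and $\bar{\mathfrak{I}}$, which admits a unique solution by the usual Volterra--Gr\"onwall argument. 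The prelimit equations realize this same map applied to the data $(\hat{I}^N(0),\hat{R}^N(0),\hat{M}^N_A,\hat{\mathfrak{I}}^N_0,\hat{\mathfrak{I}}^N_1,\hat{\mathfrak{I}}^N_2,\hat{I}^N_1,\hat{R}^N_1)$ and the deterministic kernels, and this map is continuous in the $J_1$ topology by a slight modification of Lemma 8.1 in \cite{PP-2020}, so the joint convergence follows. The main obstacle --- the only place where the argument genuinely departs from the SEIR proof --- is precisely this coupling: in SEIR the pair $(\hat{S},\hat{\mathfrak{I}})$ could be solved first as a closed two-dimensional subsystem because $S^N$ is driven only by the cumulative infections $A^N$, whereas in SIRS the identity $\hat{S}=-\hat{I}-\hat{R}$ feeds both compartment fluctuations back into the infection rate, so the well-posedness and $J_1$-continuity of the solution map must be established for the fully coupled three-dimensional Volterra system rather than for a triangular one.
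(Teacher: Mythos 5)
Your proposal is correct and coincides with the paper's intended argument: the paper gives no proof of Theorem \ref{thm-FCLT-SIRS}, stating only that the SIS and SIRS results "can be done analogously" to the SEIR case, and your relabeling of compartments and period distributions is precisely that modification. You also correctly isolate the one genuinely new point --- the identity $S^N=N-I^N(t)-R^N(t)$ turns the limiting system into a fully coupled three-dimensional linear Volterra system (equivalently a single scalar linear Volterra equation for $\hat{\Upsilon}$), whose well-posedness and $J_1$-continuity of the solution map follow from the standard Gr\"onwall argument for bounded kernels, in place of the triangular structure available in the SEIR case.
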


\section{Appendix}
We recall Theorem 13.5 from \cite{billingsley1999convergence}. In the cited reference, the result states conditions which imply  convergence in $D([0,1];\R)$, while we want to obtain convergence in $\bD:=D([0,+\infty);\R)$.
This implies that in our case condition (13.12) in \cite{billingsley1999convergence} is not required, and the above quoted theorem becomes the 
following.
\begin{theorem}\label{13.5}
Let $\{X^N,\ N\ge1\}$ be a sequence of random processes with trajectories in $\bD$. $X$ being a process with trajectories in $\bD$, suppose that the two following conditions are satisfied.
\begin{equation}\label{finitedimconv}
\text{For any }k\ge1, t_1,t_2,\ldots,t_k\ge0, \ (X^N_{t_1},X^N_{t_2},\ldots,X^N_{t_k})\Rightarrow(X_{t_1},X_{t_2},\ldots,X_{t_k}),
\end{equation}
and there exists a continuous nondecreasing function $F$ from $\R_+$ into itself, $\alpha>1/2$ and $\beta\ge0$ such that for any $0\le r\le s\le t$, $N\ge1$ and $\lambda>0$, 
\begin{equation}\label{3points}
\P[|X^N_s-X^N_r|\wedge|X^N_t-X^N_s|\ge\lambda]\le \lambda^{-4\beta}[F(t)-F(r)]^{2\alpha}\,.
\end{equation}
Then $X^N\Rightarrow X$ in $\bD$.
\end{theorem}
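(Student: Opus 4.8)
The plan is to deduce the statement from the $D([0,T])$ version of Theorem 13.5 in \cite{billingsley1999convergence}, the only new ingredient being the passage from a bounded interval to $\bD=D([0,+\infty);\R)$, which is precisely what makes the right-endpoint condition (13.12) of \cite{billingsley1999convergence} superfluous. First I would recall the standard characterisation of convergence on $[0,+\infty)$: $X^N\RA X$ in $\bD$ holds if and only if the restrictions satisfy $r_TX^N\RA r_TX$ in $D([0,T];\R)$ for every $T$ in a dense subset $\sT\subset(0,+\infty)$, where $r_T$ denotes restriction. A convenient choice is $\sT=\{T>0:\ \P(X(T)\neq X(T^-))=0\}$; since $X$ has c\`adl\`ag paths its set of fixed discontinuities is at most countable, so $\sT$ is co-countable, hence dense, and for $T\in\sT$ the restriction map $r_T$ is a.s. continuous at the paths of $X$.

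Next I would fix $T\in\sT$ and check the two hypotheses that yield convergence in $D([0,T];\R)$. The finite-dimensional convergence \eqref{finitedimconv} is assumed and identifies the limit, so the real work is tightness. By the modulus criterion (Theorem 13.2 of \cite{billingsley1999convergence}) it suffices to show, for each $\varepsilon>0$, that $\lim_{\delta\to0}\limsup_N\P(w''_{X^N}(\delta)\ge\varepsilon)=0$, together with tightness of $\sup_{0\le t\le T}|X^N(t)|$, where $w''$ is the Skorohod modulus of \cite[\S12]{billingsley1999convergence}. Both follow from the three-point bound \eqref{3points} in the same way as in the proof of Theorem \ref{th:momentsup}: the maximal inequality \cite[Theorem 10.3]{billingsley1999convergence} turns \eqref{3points} into the estimate \eqref{LN} for $L^N:=\sup_{0\le r\le s\le t\le T}|X^N_s-X^N_r|\wedge|X^N_t-X^N_s|$, which controls $w''$ and, via $\sup_{0\le t\le T}|X^N(t)|\le |X^N(0)|\vee|X^N(T)|+L^N$ together with the tightness of $X^N(0)$ and $X^N(T)$ coming from \eqref{finitedimconv}, also controls the supremum. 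The hypothesis $\alpha>1/2$ is used precisely here, so that $[F(t)-F(r)]^{2\alpha}$ dominates the modulus.

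The only place where the $[0,+\infty)$ setting matters is the behaviour at the right endpoint of the interval, governed in the $D([0,1])$ theorem by condition (13.12) of \cite{billingsley1999convergence}. The point I would make is that this condition is vacuous in the present formulation. Because $\sT$ is dense I may pick $T'\in\sT$ with $T'>T$; the bound \eqref{3points} then holds for all triples $0\le r\le s\le t\le T'$, so the modulus $w''$ computed on $[0,T']$ already controls the oscillation of $X^N$ across the point $T$, there being no unmatched jump at $T$ once points to its right are available. Thus the tightness obtained on the larger window governs a neighbourhood of $T$, and since $X$ is a.s. continuous at $T\in\sT$, the continuous mapping theorem applied to the restriction $r_T:D([0,T'])\to D([0,T])$ yields $r_TX^N\RA r_TX$ in $D([0,T])$ with no separate endpoint hypothesis. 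Equivalently, the tightness criterion on $\bD$ involves only the interior modulus $w''$ on each compact $[0,T]$ and has no right-endpoint term at all.

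I expect the routine estimates to be immediate, since they are identical to those already carried out for Theorem \ref{th:momentsup}; the only genuinely delicate step is the endpoint bookkeeping of the previous paragraph, namely making rigorous that convergence on $\bD$ requires only the interior modulus and therefore dispenses with (13.12), while keeping the reduction to $D([0,T])$ free of circularity by always enlarging the window to an interior point $T'>T$ before reading off the oscillation near $T$.
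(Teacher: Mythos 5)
Your overall skeleton is the standard one and is consistent with what the paper itself relies on (the paper offers no proof of this statement, only the remark that condition (13.12) of \cite{billingsley1999convergence} becomes superfluous on $[0,+\infty)$): reduce to $D([0,T])$ for $T$ in the co-countable, hence dense, set of a.s.\ continuity points of $X$, via Theorem 16.7 of \cite{billingsley1999convergence}, and obtain the modulus control from \eqref{3points} through the maximal inequality of Theorem 10.3. However, the mechanism you propose for dispensing with the right-endpoint condition does not work as written, and it is precisely the step you flag as the delicate one. First, the bound \eqref{3points} on the enlarged window $[0,T']$ controls only the \emph{minimum} of two adjacent increments; it gives no control on a single increment such as $|X^N(T)-X^N(T-\delta)|$ (a process that jumps once just before $T$ and is constant thereafter makes every three-point quantity vanish). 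So the enlarged window does not control the oscillation of $X^N$ across $T$ in the sense required by (13.12). Second, your appeal to the continuous mapping theorem for $r_T:D([0,T'])\to D([0,T])$ presupposes $X^N\RA X$ in $D([0,T'])$, which is exactly the statement under proof with $T'$ in place of $T$; enlarging the window relocates the endpoint problem rather than removing it.

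The gap is easy to close, because the correct source of the endpoint control is the hypothesis \eqref{finitedimconv}, not \eqref{3points}. Since \eqref{finitedimconv} is assumed at \emph{all} time points, for any $T$ with $\P(X(T)=X(T^-))=1$ one has $X^N(T)-X^N(T-\delta)\RA X(T)-X(T-\delta)$, whence by the portmanteau theorem $\limsup_N\P\big(|X^N(T)-X^N(T-\delta)|\ge\varepsilon\big)\le\P\big(|X(T)-X(T-\delta)|\ge\varepsilon\big)$, and the right-hand side tends to $0$ as $\delta\downarrow0$ because $X(T-\delta)\to X(T^-)=X(T)$ a.s. This is exactly condition (13.12) on $[0,T]$, so Theorem 13.5 of \cite{billingsley1999convergence} applies verbatim on $[0,T]$ (after the trivial rescaling) for every such $T$, and Theorem 16.7 upgrades the resulting convergences to $X^N\RA X$ in $\bD$. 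With this substitution, your tightness paragraph (Theorem 10.3 applied to \eqref{3points}, using $\alpha>1/2$) is sound and the argument is complete.
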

As explained in \cite{billingsley1999convergence}, the following moment condition is stronger than \eqref{3points}
\begin{equation}\label{moment3points}
\E[|X^N_s-X^N_r|^{2\beta}|X^N_t-X^N_s|^{2\beta}]\le[F(t)-F(r)]^{2\alpha}\,.
\end{equation}

\bibliographystyle{plain}
\bibliography{Epidemic-Age}

\begin{thebibliography}{10}

\bibitem{ball1986unified}
Frank Ball.
\newblock A unified approach to the distribution of total size and total area
  under the trajectory of infectives in epidemic models.
\newblock {\em Advances in Applied Probability}, 18(2):289--310, 1986.

\bibitem{ball1993final}
Frank Ball and Damian Clancy.
\newblock The final size and severity of a generalised stochastic multitype
  epidemic model.
\newblock {\em Advances in applied probability}, 25(4):721--736, 1993.

\bibitem{barbour1975duration}
Andrew~D Barbour.
\newblock The duration of the closed stochastic epidemic.
\newblock {\em Biometrika}, 62(2):477--482, 1975.

\bibitem{billingsley1999convergence}
Patrick Billingsley.
\newblock {\em Convergence of probability measures}.
\newblock John Wiley \& Sons, 1999.

\bibitem{brauer2008age}
Fred Brauer.
\newblock Age-of-infection and the final size relation.
\newblock {\em Mathematical Biosciences \& Engineering}, 5(4):681, 2008.

\bibitem{BCF-2019}
Fred Brauer, Carlos Castillo-Chavez, and Zhilan Feng.
\newblock {\em Mathematical Models in Epidemiology}.
\newblock Springer, 2019.

\bibitem{britton2018stochastic}
Tom Britton and Etienne Pardoux~eds.
\newblock {\em Stochastic epidemic models with inference}.
\newblock Springer, 2019.

\bibitem{ccinlar2011probability}
Erhan {\c{C}}{\i}nlar.
\newblock {\em Probability and {S}tochastics}, volume 261.
\newblock Springer Science \& Business Media, 2011.

\bibitem{FPP2020b}
Rapha\"el Forien, Guodong Pang, and \'Etienne Pardoux.
\newblock Epidemic models with varying infectivity.
\newblock {\em SIAM Journal on Applied Mathematics}, 81:1893 -- 1930, 2021.

\bibitem{hahn1978central}
Marjorie~G Hahn.
\newblock Central limit theorems in {$D [0, 1]$}.
\newblock {\em Zeitschrift f{\"u}r Wahrscheinlichkeitstheorie und verwandte
  Gebiete}, 44(2):89--101, 1978.

\bibitem{he2020temporal}
Xi~He, Eric~HY Lau, Peng Wu, Xilong Deng, Jian Wang, Xinxin Hao, Yiu~Chung Lau,
  Jessica~Y Wong, Yujuan Guan, Xinghua Tan, et~al.
\newblock Temporal dynamics in viral shedding and transmissibility of
  {COVID}-19.
\newblock {\em Nature medicine}, 26(5):672--675, 2020.

\bibitem{hoppensteadt1974age}
Frank Hoppensteadt.
\newblock An age dependent epidemic model.
\newblock {\em Journal of the Franklin Institute}, 297(5):325--333, 1974.

\bibitem{inaba2004mathematical}
Hisashi Inaba and Hisashi Sekine.
\newblock A mathematical model for chagas disease with infection-age-dependent
  infectivity.
\newblock {\em Mathematical biosciences}, 190(1):39--69, 2004.

\bibitem{KMK}
William~Ogilvy Kermack and Anderson~G McKendrick.
\newblock A contribution to the mathematical theory of epidemics.
\newblock {\em Proceedings of the Royal Society of London. Series A},
  115(772):700--721, 1927.

\bibitem{magal2013two}
Pierre Magal and Connell McCluskey.
\newblock Two-group infection age model including an application to nosocomial
  infection.
\newblock {\em SIAM Journal on Applied Mathematics}, 73(2):1058--1095, 2013.

\bibitem{PP-2020b}
Guodong Pang and {\'E}tienne Pardoux.
\newblock Multi--patch epidemic models with general exposed and infectious
  periods.
\newblock {\em arXiv:2006.14412}, 2020.

\bibitem{PP-2020}
Guodong Pang and {\'E}tienne Pardoux.
\newblock Functional limit theorems for non-{M}arkovian epidemic models.
\newblock {\em The Annals of Applied Probability}, 32(3):1615--1665, 2022.

\bibitem{reinert1995asymptotic}
Gesine Reinert.
\newblock The asymptotic evolution of the general stochastic epidemic.
\newblock {\em The Annals of Applied Probability}, 5(4):1061--1086, 1995.

\bibitem{sellke1983asymptotic}
Thomas Sellke.
\newblock On the asymptotic distribution of the size of a stochastic epidemic.
\newblock {\em Journal of Applied Probability}, 20(2):390--394, 1983.

\bibitem{thieme1993may}
Horst~R Thieme and Carlos Castillo-Chavez.
\newblock How may infection-age-dependent infectivity affect the dynamics of
  {HIV/AIDS}.
\newblock {\em SIAM Journal on Applied Mathematics}, 53(5):1447--1479, 1993.

\bibitem{wang1975limit}
Frank~JS Wang.
\newblock Limit theorems for age and density dependent stochastic population
  models.
\newblock {\em Journal of Mathematical Biology}, 2(4):373--400, 1975.

\bibitem{wang1977central}
Frank~JS Wang.
\newblock A central limit theorem for age-and density-dependent population
  processes.
\newblock {\em Stochastic Processes and their Applications}, 5(2):173--193,
  1977.

\bibitem{wang1977gaussian}
Frank~JS Wang.
\newblock Gaussian approximation of some closed stochastic epidemic models.
\newblock {\em Journal of Applied Probability}, pages 221--231, 1977.

\end{thebibliography}

\end{document}